\documentclass[11pt,reqno]{amsart}
\usepackage{amsaddr}
\usepackage[pdftex]{graphicx}
\usepackage{pgfplots}
\usepackage{orcidlink}
\usepackage{geometry}
\usepackage{stix}
\usepackage{subcaption}

\usepackage{caption}
\usepackage{amsmath,amsthm}
\usepackage{cite}
\usepackage{hyperref}
\hypersetup{
    colorlinks=true, %set true if you want colored links
    linktoc=all,     %set to all if you want both sections and subsections linked
    allcolors=blue,  %choose some color if you want links to stand out
}
\usepackage{url}
\newtheorem{theorem}{Theorem}[section]
\newtheorem{lemma}{Lemma}[section]
\newtheorem{proposition}{Proposition}[section]
\newtheorem{corollary}{Corollary}[section]
\newtheorem{conjecture}{Conjecture}[section]
\theoremstyle{definition}
\newtheorem{definition}{Definition}[section]

\newtheorem{remark}{Remark}[section]
\numberwithin{equation}{section}
\pgfplotsset{compat=1.18}
\definecolor{wewdxt}{rgb}{0.45,0.45,0.45}
\definecolor{uuuuuu}{rgb}{0.3,0.3,0.3}
\definecolor{zzttqq}{rgb}{0.6,0.2,0.}
\definecolor{qqwwtt}{rgb}{0.,0.4,0.2}
\definecolor{xfqqff}{rgb}{0.5,0.,1.}
\definecolor{yqyqyq}{rgb}{0.5,0.5,0.5}
\definecolor{xdxdff}{rgb}{0.5,0.5,1.}
\definecolor{ududff}{rgb}{0.3,0.3,1.}

\begin{document}
\pagenumbering{arabic}

\title[Generalized Chapple--Euler Relation]{Generalized Chapple--Euler Relation}
\author{Vladimir Dragovi\'{c}$^{1,2}$ and Mohammad Hassan Murad$^{3}$\orcidlink{0000-0002-8293-5242}}
\address{$^{1}$Department of Mathematical Sciences
\\The University of Texas at Dallas, Richardson, TX, USA\\
$^{2}$Mathematical Institute SANU, Belgrade, Serbia\\
$^{3}$Department of Mathematics\\
The University of Texas at Arlington, Arlington, TX, USA
}
\email{vladimir.dragovic@utdallas.edu; mohammad.murad2@uta.edu}

\begin{abstract}
We provide a new proof of the necessary and sufficient condition for a triangle to be circumscribed about a central conic (ellipse or hyperbola), expressed in terms of the circumradius and the distances from the circumcenter to the foci. If the inscribed conic is an ellipse, in the limiting case, where the foci coincide, the condition reduces to the classical Chapple--Euler relation.

We also prove that the sum of the squares of the sides of a triangle in a family inscribed in a circle and circumscribed about a central conic remains invariant throughout the family if and only if the center of the circle coincides either with the center of the conic or with one of its foci. 

Using Blaschke products of degree three and affine transformations, we characterize all central $3$-Poncelet pairs with constant triangle area. We prove that the associated family of triangles has constant area if and only if the two conics are homothetic ellipses. We also prove two observations of Reznik concerning the invariance of the total area of the power circles of Poncelet triangles by relating them to Apollonius's identity on medians and establish a more general result valid for both ellipses and hyperbolas. Finally, we propose two conjectures on area invariance of power-circles.
\end{abstract}

\keywords{Central conic; Joachimsthal's notation; Blaschke products}
\subjclass[2020]{Primary: MSC 2020: 51M04; Secondary: 51N20, 51N15}

%\tableofcontents

\maketitle{}

\section{Introduction}\label{sec:intro}\noindent
For an arbitrary triangle, the relation among its circumradius $R$, inradius $r$, and the distance $d$ between the circumcenter and the incenter is given by the classical formula
\begin{equation}\label{eq:chapple}
    d=\sqrt{R^2-2Rr}.
\end{equation}
This formula is commonly known as the \emph{Euler relation for a triangle}. It was first proved by the English surveyor and mathematician William Chapple (1718--1781) in 1746 \cite{Chapple1746}.

\begin{figure}[htbp]
  \centering
  % First subfigure
  \begin{subfigure}[b]{0.45\textwidth}
    \centering
\begin{tikzpicture}[scale=2.2]
\clip(-1.6,-1.2) rectangle (0.8,1.2);
\draw [line width=1.pt,gray] (0.,0.) circle (0.4cm);
\draw [line width=1.pt,gray] (-0.447213595499958,0.) circle (1.cm);
\draw [line width=1.pt,color=red] (0.524578114876515,0.23584077604089057)-- (0.09600477224665926,-0.8395914512086823);
\draw [line width=1.pt,color=red] (0.09600477224665926,-0.8395914512086823)-- (-1.044081294398846,0.8023396724649395);
\draw [line width=1.pt,color=red] (-1.044081294398846,0.8023396724649395)-- (0.524578114876515,0.23584077604089057);
\draw [line width=1.pt,color=green] (-1.3846614591736213,0.34812569984659975)-- (0.4613882169248855,0.41766343682238877);
\draw [line width=1.pt,color=green] (0.4613882169248855,0.41766343682238877)-- (0.31441547804129516,-0.6480132362357209);
\draw [line width=1.pt,color=green] (0.31441547804129516,-0.6480132362357209)-- (-1.3846614591736213,0.34812569984659975);
\draw [line width=1.pt,color=blue] (-1.333336986551548,-0.46344938864047586)-- (0.4786620116946136,-0.3778284796069286);
\draw [line width=1.pt,color=blue] (0.4786620116946136,-0.3778284796069286)-- (0.2763999226313168,0.6902053870969708);
\draw [line width=1.pt,color=blue] (0.2763999226313168,0.6902053870969708)-- (-1.333336986551548,-0.46344938864047586);
\draw [line width=1.pt,color=brown] (-0.679989396950351,-0.972530424335983)-- (0.5475620335060153,-0.1020854932679976);
\draw [line width=1.pt,color=brown] (0.5475620335060153,-0.1020854932679976)-- (-0.15377396275332728,0.9559776053514655);
\draw [line width=1.pt,color=brown] (-0.15377396275332728,0.9559776053514655)-- (-0.679989396950351,-0.972530424335983);
\begin{scriptsize}
\draw[color=black] (-0.15,0.2) node {$\mathcal{D}$};
\draw[color=black] (-0.75,1.08) node {$\mathcal{C}$};
\draw [fill=red] (-1.044081294398846,0.8023396724649395) circle (0.8pt);
\draw [fill=red] (0.524578114876515,0.23584077604089057) circle (0.8pt);
\draw [fill=red] (0.09600477224665926,-0.8395914512086823) circle (0.8pt);
%\draw [fill=black] (0.,0.) circle (0.6pt);
%\draw [fill=black] (-0.447213595499958,0.) circle (0.6pt);
\draw [fill=green] (-1.3846614591736213,0.34812569984659975) circle (0.8pt);
\draw [fill=green] (0.4613882169248855,0.41766343682238877) circle (0.8pt);
\draw [fill=green] (0.31441547804129516,-0.6480132362357209) circle (0.8pt);
\draw [fill=blue] (-1.333336986551548,-0.46344938864047586) circle (0.8pt);
\draw [fill=blue] (0.4786620116946136,-0.3778284796069286) circle (0.8pt);
\draw [fill=blue] (0.2763999226313168,0.6902053870969708) circle (0.8pt);
\draw [fill=brown] (-0.679989396950351,-0.972530424335983) circle (0.8pt);
\draw [fill=brown] (-0.15377396275332728,0.9559776053514655) circle (0.8pt);
\draw [fill=brown] (0.5475620335060153,-0.1020854932679976) circle (0.8pt);
\end{scriptsize}
\end{tikzpicture}
    \caption{$\mathcal{D}$ is an incircle.}
    \label{fig:illusporism(A)}
  \end{subfigure}
  \begin{subfigure}[b]{0.45\textwidth}
    \centering
\begin{tikzpicture}[scale=1.5]
%[line cap=round,line join=round,>=triangle 45,x=1.0cm,y=1.0cm]
\clip(-3.3,-1.6) rectangle (1.6,1.6);
\draw [line width=1.pt,gray] (0.,0.) circle (1.5cm);
\draw [line width=1.pt,gray] (-2.,0.) circle (1.cm);
\draw [line width=1.pt,color=red] (-2.795214478408659,0.6063282389343616)-- (-1.7598964303844107,0.9707472770282958);
\draw [line width=1.pt,color=red] (-1.7598964303844107,0.9707472770282958)-- (-1.3627585068995551,-0.7706641807370542);
\draw [line width=1.pt,color=red] (-1.3627585068995551,-0.7706641807370542)-- (-2.795214478408659,0.6063282389343616);
\draw [line width=1.pt,color=green] (-2.9689318227693366,0.2473279661196666)-- (-1.5814582237499366,0.9081975454346052);
\draw [line width=1.pt,color=green] (-1.5814582237499366,0.9081975454346052)-- (-1.4359945305701998,-0.8257710520799761);
\draw [line width=1.pt,color=green] (-1.4359945305701998,-0.8257710520799761)-- (-2.9689318227693366,0.2473279661196666);
\draw [line width=1.pt,color=blue] (-2.9861375152486223,-0.16593010883885254)-- (-1.4555737308234356,0.8388087013321249);
\draw [line width=1.pt,color=blue] (-1.4555737308234356,0.8388087013321249)-- (-1.5521657849262551,-0.8941166119747931);
\draw [line width=1.pt,color=blue] (-1.5521657849262551,-0.8941166119747931)-- (-2.9861375152486223,-0.16593010883885254);
\draw [line width=1.pt,color=brown] (-2.77856916046707,-0.6275588118810876)-- (-1.3590006753513848,0.7675414423990532);
\draw [line width=1.pt,color=brown] (-1.3590006753513848,0.7675414423990532)-- (-1.7744657694305497,-0.9742352440973621);
\draw [line width=1.pt,color=brown] (-1.7744657694305497,-0.9742352440973621)-- (-2.77856916046707,-0.6275588118810876);
\begin{scriptsize}
\draw[color=black] (-0.6198688527855291,1.4701439618593204) node {$\mathcal{D}$};
\draw[color=black] (-2.595044707572535,0.9383658471089728) node {$\mathcal{C}$};
\draw [fill=red] (-2.795214478408659,0.6063282389343616) circle (1.0pt);
\draw [fill=red] (-1.7598964303844107,0.9707472770282958) circle (1.0pt);
\draw [fill=red] (-1.3627585068995551,-0.7706641807370542) circle (1.0pt);
%\draw [fill=black] (-2.,0.) circle (0.8pt);
%\draw [fill=black] (0.,0.) circle (0.8pt);
\draw [fill=green] (-2.9689318227693366,0.2473279661196666) circle (1.0pt);
\draw [fill=green] (-1.5814582237499366,0.9081975454346052) circle (1.0pt);
\draw [fill=green] (-1.4359945305701998,-0.8257710520799761) circle (1.0pt);
\draw [fill=blue] (-2.9861375152486223,-0.16593010883885254) circle (1.0pt);
\draw [fill=blue] (-1.4555737308234356,0.8388087013321249) circle (1.0pt);
\draw [fill=blue] (-1.5521657849262551,-0.8941166119747931) circle (1.0pt);
\draw [fill=brown] (-2.77856916046707,-0.6275588118810876) circle (1.0pt);
\draw [fill=brown] (-1.3590006753513848,0.7675414423990532) circle (1.0pt);
\draw [fill=brown] (-1.7744657694305497,-0.9742352440973621) circle (1.0pt);
\end{scriptsize}
\end{tikzpicture}
    \caption{$\mathcal{D}$ is an excircle.}
    \label{fig:illusporism(B)}
  \end{subfigure}
  \caption{Illustration of a Porism.}
  \label{fig:illusporism}
\end{figure}

A similar relation involving the excenter was established by Landen in 1755 \cite{Landen1755}. If $r$ denotes the exradius, then the formula becomes
\begin{equation}\label{eq:landen}
    d=\sqrt{R^2+2Rr}.
\end{equation}
Figure~\ref{fig:illusporism} illustrates both situations.

The Chapple and Landen formulas may be unified into the following theorem, which gives a necessary and sufficient condition for a triangle to be inscribed in and circumscribed about a pair of circles.

\begin{theorem}[Chapple--Landen]\label{thm:chapplelanden}
A triangle may be inscribed in and circumscribed about a pair of circles of radii $R$ and $r$, respectively, if and only if
\begin{equation}\label{eq:chapplelanden}
    (R^2-d^2)^2=4R^2r^2,
\end{equation}
where $d$ is the distance between the centers of the circles.
\end{theorem}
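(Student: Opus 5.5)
We prove the two directions separately. Necessity uses the power of a point with respect to the circumcircle; sufficiency uses Poncelet-style continuity.

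\emph{Necessity.} Let $ABC$ be inscribed in the circle $\mathcal{C}$ with center $O$ and radius $R$, and tangent to the lines of its sides at the circle $\mathcal{D}$ with center $I$ and radius $r$. Then $\mathcal{D}$ is either the incircle or one of the three excircles, and $I$ lies on an angle bisector from a vertex, say $A$.

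Extend $AI$ to meet $\mathcal{C}$ again at $M$, the midpoint of the arc $BC$. The incenter/excenter lemma gives $MB=MC=MI$. This follows from angle chasing: $\angle MBI=\angle MIB$ in the incircle case, and the analogous equality for the excenter, since the internal and external bisectors at $B$ are perpendicular. By the extended law of sines, $MB=2R\sin(\alpha/2)$, where $\alpha=\angle BAC$. Also, dropping the perpendicular from $I$ to line $AB$ gives $AI=r/\sin(\alpha/2)$.

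Hence $AI\cdot IM=2Rr$. The power of $I$ with respect to $\mathcal{C}$ is $d^2-R^2$. Its absolute value equals $AI\cdot IM$, so $|R^2-d^2|=2Rr$, which squares to \eqref{eq:chapplelanden}. The sign distinguishes the two cases: $I$ lies inside $\mathcal{C}$ for the incircle, giving \eqref{eq:chapple}, and outside for an excircle, giving \eqref{eq:landen}.

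\emph{Sufficiency.} Suppose \eqref{eq:chapplelanden} holds. Assume first $R^2-d^2=2Rr>0$, so $I$ is inside $\mathcal{C}$ and $d<R-r$, which places $\mathcal{D}$ inside $\mathcal{C}$. Here is the construction:
\begin{itemize}
\item[(i)] Pick $A\in\mathcal{C}$ and draw the two tangents from $A$ to $\mathcal{D}$.
\item[(ii)] Let them meet $\mathcal{C}$ again at $B$ and $C$.
\end{itemize}
We must show that $BC$ is tangent to $\mathcal{D}$. The line $AI$ bisects $\angle BAC$ and meets $\mathcal{C}$ again at the arc midpoint $M$. Power of a point gives $AI\cdot IM=R^2-d^2=2Rr$. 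Also $AI=r/\sin(\alpha/2)$, so $IM=2R\sin(\alpha/2)=MB$.

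A point on the bisector $AM$ with $MI=MB$ and lying between $A$ and $M$ is the incenter of $ABC$; this is the converse of the same lemma, since the incenter is such a point and the point is unique. Therefore $I$ is the incenter of $ABC$. Its distance to $AB$ is $r$, so the inradius is $r$, and $BC$ is tangent to $\mathcal{D}$. The case $R^2-d^2=-2Rr$ is identical, using the excenter version of the lemma, with $I$ on the extension of $AM$ beyond $M$.

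The main obstacle is the configuration bookkeeping in the converse. For the incircle case we need the strict containment $d<R-r$, which follows from $(R-r)^2-d^2=r^2>0$. For the excircle case we must check that the tangents from $A$ actually meet $\mathcal{C}$ again, and that $I$ lands on the correct side of $M$ so that the excenter version of the lemma applies.
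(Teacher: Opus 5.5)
Your necessity argument is correct. So is the incircle branch of sufficiency: the points of tangency lie inside $\mathcal C$, hence on the chords $AB$ and $AC$, which makes $AI$ the internal bisector and puts $I$ between $A$ and $M$. That branch even gives the porism, since every $A\in\mathcal C$ works. This is a genuinely different route from the paper's. The paper only cites Theorem~\ref{thm:chapplelanden} and recovers it as the case $a=b=r^2$ of Lemma~\ref{lemm:3pons} (Remark~\ref{rem:genchappeuler}), which it proves with Joachimsthal symbols and a computer-algebra factorization.

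The gap is the excircle branch $d^2=R^2+2Rr$. You call it ``identical'' but only list the checks it needs, and the first check genuinely fails. Since $d^2-(r-R)^2=r(4R-r)$, for $r\ge 4R$ the circle $\mathcal C$ lies in the closed disk bounded by $\mathcal D$ (internally tangent when $r=4R$). Then no point of $\mathcal C$ has two real tangents to $\mathcal D$, and no triangle exists; for example take $R=1$, $r=5$, $d=\sqrt{11}$. This agrees with $r_a<4R-r$ for every triangle, which follows from $r_a+r_b+r_c=4R+r$ and $r_b,r_c>r$. So the ``if'' direction, read literally, is false in this regime. You need the extra hypothesis $r<4R$, i.e.\ that $\mathcal C$ is not contained in the closed disk of $\mathcal D$. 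In the paper this is the standing assumption of Remark~\ref{rem:detD}, that points of $\mathcal C$ have real tangents to $\mathcal D$.

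Even with $r<4R$, your claim that $I$ lies beyond $M$ on line $AM$ fails for a generic starting point. The circles now cross, and the points of tangency can lie outside $\mathcal C$, on extensions of the sides. If $\mathcal D$ is the excircle opposite a vertex $X$, its center lies on the \emph{external} bisector at the other two vertices. Here is how to repair it:
\begin{itemize}
\item Take $A$ on the arc of $\mathcal C$ outside $\mathcal D$; this arc is nonempty because $d+R>r$.
\item Avoid the finitely many $A$ at which a tangent from $A$ to $\mathcal D$ also touches $\mathcal C$, or at which line $AI$ touches $\mathcal C$.
\item If $I$ is on the internal bisector, your computation still gives $IM=MB$. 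So $I$ is the incenter $I_0$ or the excenter $I_A$, the two points of line $AM$ at distance $MB$ from $M$. Since $d>R$, $I=I_A$.
\item If $I$ is on the external bisector, let $M'$ be its second intersection with $\mathcal C$, the midpoint of arc $BAC$. Then $AI=r/\cos(\alpha/2)$ and $M'B=2R\cos(\alpha/2)$, so the power of $I$ gives $IM'=M'B$. Since $\angle I_BBI_C=\angle I_BCI_C=90^\circ$, $M'$ is the midpoint of $I_BI_C$ with $M'I_B=M'I_C=M'B$. Hence $I\in\{I_B,I_C\}$.
\end{itemize}
In both cases $\mathcal D$ is an excircle of $ABC$, so it is tangent to $BC$.
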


Polygons admitting both a circumcircle and an incircle are called \emph{bicentric}. Analogues of the Chapple--Euler relation for bicentric $n$-gons were later proved by Fuss and subsequently generalized by other authors for $n\geq 4$.

The Chapple--Landen relation \eqref{eq:chapplelanden} reveals a remarkable phenomenon: whenever there exists one triangle inscribed in and circumscribed about a pair of circles, there exists an infinite family of such triangles. In other words, either a pair of circles do not admit any triangle or there exists an infinite family of triangles simultaneously inscribed in and circumscribed about them. Such problems are known as \emph{porisms} in the literature. Motivated by this phenomenon, Poncelet considered the problem for arbitrary smooth conics and, in 1813--14, discovered one of the central results of projective geometry, now known as the \emph{Poncelet Closure Theorem} (see, e.g. \cite{Dragovic-Radnovic2011}).

In this paper we adopt the following terminology.

\begin{definition}\label{def:3ponspair}
A pair of smooth conics $(\mathcal C,\mathcal D)$ is called a \emph{$3$-Poncelet pair} if there exists a triangle inscribed in $\mathcal C$ and circumscribed about $\mathcal D$. Such a triangle will be called a \emph{Poncelet triangle}.

Figure \ref{fig:illusporism} provides some examples of $3$-Poncelet pairs.
\end{definition}

 The principal contributions of this paper may be summarized as follows.
\begin{itemize}
    \item We derive a generalized Chapple--Euler relation for Poncelet triangles associated with a circle and a central conic using analytic geometry tools. 
    \item We identify several invariants associated with such Poncelet families and provide geometric interpretations of these invariants. 
    \item We extend the theory to ellipse--conic Poncelet pairs and apply the resulting framework to recent observations concerning power circles and area invariants associated with Poncelet triangles.
\end{itemize}
 
Among the results obtained, Theorem~\ref{thm:sqsumindep} provides a geometric characterization of the central Poncelet pairs for which the total area of the squares constructed on the sides of a Poncelet triangle remains invariant throughout the family.

To formalize this notion of an invariant, we adopt the following definition.
\begin{definition}\label{def:invariant}
Let $\mathcal P$ be a family of triangles. A geometric object or a numerical quantity associated with a triangle in $\mathcal P$ will be called an \emph{invariant throughout} $\mathcal P$ if it is independent of the choice of a triangle in the family $\mathcal P$.

Examples include fixed points (such as the classical centers), fixed lines (such as the Euler line), fixed circles (auxiliary circle of the central conic), and constant numerical quantities (such as area).
\end{definition}

Polygons inscribed in a circle and circumscribed about a central conic from a confocal family have recently been studied in \cite{Dragovic-Radnovic2024} using the Poncelet theorem together with the theory of elliptic curves and functions, particularly through the Cayley condition characterizing periodic trajectories.

The aim of Sections~\ref{sec:2} and \ref{sec:3} is to characterize the $3$-Poncelet pairs formed by a circle and a central conic and to derive geometric properties of the associated Poncelet triangles \emph{without invoking the Poncelet Closure Theorem or the theory of elliptic curves and functions}. Instead, we employ a classical tool from analytic geometry---the Joachimsthal symbols, reviewed briefly in Section~\ref{sec:2}. The resulting condition \eqref{eq:genchappleeuler} extends the Chapple--Landen relation \eqref{eq:chapplelanden}, which is recovered as a limiting case when the foci of the inscribed conic coalesce.

Section~\ref{sec:4} is devoted to the problem of determining when the area of the triangles in a Poncelet family remains constant. Using Blaschke products of degree 3, we prove that this occurs only for concentric homothetic ellipses, thereby characterizing all central $3$-Poncelet pairs with invariant triangle area.

In Section~\ref{sec:affinetrans}, we extend the generalized Chapple--Euler relation to the case where the circumconic is an ellipse. Using affine transformations, we obtain the necessary and sufficient condition for a pair consisting of an ellipse and a central conic with parallel axes to form a $3$-Poncelet pair.

Finally, in Section \ref{sec:conj}, we prove two observations of Reznik concerning the invariance of the total area of the power circles associated with Poncelet triangles \cite{Reznik2021,Reznik2024}.

Poncelet pairs consisting of a circle and a parabola have recently been studied by similar methods in \cite{Dragovic-Murad2025a} and, from the perspective of the Poncelet theorem and Cayley conditions, in \cite{Dragovic-Murad2025b}. Properties of Poncelet polygons have also been the subject of several recent investigations; see \cite{Rezniketal.2021,Bialy-Tabachnikov2022,Helmanetal.2022,Helmanetal.2023,Garciaetal.2023,Reznik-Garcia2023,Dragovic-Radnovic2025a,Garciaetal.2026} and the references therein.

% Beyond the generalized Chapple--Euler relation, the paper reveals a close connection between Poncelet geometry and the invariance of quadratic perimeter and area functionals, leading to several classification results.

Throughout this paper, unless stated otherwise, $\mathcal D$ denotes a central conic in the standard form
\begin{equation}\label{eq:conicD}
\frac{x^2}{a}+\frac{y^2}{b}=1,
\end{equation}
where $a,b\in\mathbb R$. For $0<b<a$, the conic is an ellipse, whereas for $b<0$ it is a hyperbola.

\section{Triangles inscribed in a circle and circumscribed about a central conic}\label{sec:2}
\subsection{Joachimsthal Symbols and Tangent Constructions}
We begin by recalling a classical tool from analytic geometry, introduced by F.~Joachimsthal (1818--1861) \cite{Joachimsthal1871}, for determining the pair of tangents from a point to a conic.

Let $S$ be a conic in the plane given by
\begin{equation*}
    S(x,y):=ax^2+2bxy+cy^2+2dx+2ey+f=0.
\end{equation*}
We use the same symbol $S$ to denote the associated $3\times 3$ symmetric matrix
\begin{equation*}
S=\begin{pmatrix}
        a & b & d\\
        b & c & e\\
        d & e & f
    \end{pmatrix},
\end{equation*}
so that $\mathbf{x}^T S \mathbf{x}=S(x,y)$, where
\[
\mathbf{x}=\begin{pmatrix} x & y & 1 \end{pmatrix}^T.
\]

The \emph{Joachimsthal symbols} associated with the conic $S(x,y)=0$ and points $A=(x_A,y_A)$ and $B=(x_B,y_B)$ are defined by
\begin{align*}
    S_A(x,y)&:=ax_A x+b (x y_A+x_A y)+cy_A y+d (x+x_A)+e(y+y_A)+f,\\
    S_{AA}&:=S(x_A,y_A)=S_A(x_A,y_A),\\
    S_{AB}&:=S_A(x_B,y_B)=S_B(x_A,y_A)=S_{BA}.
\end{align*}

The \emph{Joachimsthal equation} for the pair of tangents from a point $A$ to the conic $S=0$ is given by
\begin{equation*}
    S\,S_{AA}=S_A^2.
\end{equation*}
For further details, see \cite{Dragovic-Murad2025a}.

\begin{lemma}\label{lemm:tangent}
Let $S(x,y)=0$ be a conic and $A=(x_A,y_A)$ a point in the plane. Denote by $\mathcal{C}(O)$ the circle of unit radius with center $O$.
\begin{itemize}
 \item[(a)] The slopes of the tangents from $A$ to the conic $S$ are the roots of the quadratic equation
\begin{equation}\label{eq:slopeeq}
    Um^2-2Vm+W=0,
\end{equation}
where
\begin{align*}
    U&=(ac-b^2) x_A^2 + 2(cd-be) x_A+ cf-e^2,\\
    V&=(ac-b^2)x_A y_A+(ae-bd)x_A+(cd-be)y_A+de-bf,\\
    W&=(ac-b^2)y_A^2+2(ae-bd)y_A+af-d^2.
\end{align*}

If $U \ne 0$, then the slopes $m_{AB}$ and $m_{AC}$ of the tangents $AB$ and $AC$ are given by 
\begin{equation}\label{eq:slopevals}
    m_{AB}=\frac{V+\sqrt{-S_{AA} \det S}}{U}, \quad
    m_{AC}=\frac{V-\sqrt{-S_{AA} \det S}}{U}.
\end{equation}

If $U=0$ and $V \neq 0$, then one tangent is vertical, while the slope of the other tangent, say $m_{AC}$, is
    \begin{equation}\label{eq:vertslope}
        m_{AC}=\frac{W}{2V}.
    \end{equation}

\item[(b)] Assume that $A\in \mathcal{C}(O)$, and let $B$ and $C$ be the second intersection points of the tangents from $A$ with $\mathcal{C}(O)$. Then the $x$-coordinate of $B$ is
\begin{equation}\label{eq:xBcoord}
        x_{B}=\frac{(m_{AB}^2-1) x_A - 2m_{AB} (y_A-y_O) + 2x_O}{m_{AB}^2 + 1},
\end{equation}
and the analogous formula holds for $C$ with $m_{AC}$. Moreover, the slope of the line $BC$ is
\begin{equation}\label{eq:mBC}
    m_{BC}=\frac{(x_A - x_O) (U-W) + 2V (y_A - y_O)}{(y_A - y_O) (U - W) - 2V (x_A - x_O)}.
\end{equation}

\item[(c)] If $A\in \mathcal{C}(O)$, then $A$ lies on a common tangent to $\mathcal{C}(O)$ and $S=0$ if and only if either $U=0$ or $f_1(A,O)=0$, where
\begin{equation}\label{eq:commtan}
f_1(A,O):= U(x_A-x_O)^2+2V(x_A-x_O)(y_A-y_O)+W(y_A -y_O)^2.
\end{equation}
\end{itemize}
\end{lemma}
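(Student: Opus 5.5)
For part (a), I would substitute the line $y-y_A=m(x-x_A)$ through $A$ into $S(x,y)=0$. Writing $x=x_A+t$, $y=y_A+mt$ gives a quadratic in $t$:
\[
S_{AA}+2t\bigl(S_x+mS_y\bigr)+t^2Q(m)=0,
\]
where $Q(m)=a+2bm+cm^2$, and $S_x=ax_A+by_A+d$, $S_y=bx_A+cy_A+e$ are the half-partials at $A$. The line is tangent exactly when the discriminant vanishes:
\[
(S_x+mS_y)^2-S_{AA}\,Q(m)=0.
\]
This is equivalent to the Joachimsthal equation $SS_{AA}=S_A^2$ restricted to lines through $A$. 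Expanding in powers of $m$ gives:
\begin{itemize}
\item coefficient of $m^2$: $S_y^2-cS_{AA}$;
\item coefficient of $m$: $2(S_xS_y-bS_{AA})$;
\item constant term: $S_x^2-aS_{AA}$.
\end{itemize}
Multiplying by $-1$, it remains to check the polynomial identities
\[
cS_{AA}-S_y^2=U,\qquad S_xS_y-bS_{AA}=V,\qquad aS_{AA}-S_x^2=W.
\]
These are routine expansions: the quadratic terms in $x_A,y_A$ cancel into the $2\times2$ minors, producing the stated cofactor-type coefficients.

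The root formula \eqref{eq:slopevals} then follows from the quadratic formula once I verify $V^2-UW=-S_{AA}\det S$. I would prove this via the classical identity that the adjugate's $2\times2$ minors equal $\det S$ times the complementary entries. Concretely, $U,V,W$ are the values at $\mathbf{x}_A$ of the quadratic forms given by certain cofactor combinations, so $V^2-UW$ equals $-\det S\cdot \mathbf{x}_A^TS\mathbf{x}_A$. A direct expansion, or a Cauchy--Binet argument on $\operatorname{adj}S$, settles it. This identity is the main computational obstacle. If the clean argument resists, I would fall back on brute-force symbolic expansion. The case $U=0$ is immediate: the quadratic degenerates to $-2Vm+W=0$, and the missing root corresponds to the vertical tangent $m=\infty$.

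For part (b), I would parametrize the line through $A$ with slope $m$ and intersect it with the unit circle $(x-x_O)^2+(y-y_O)^2=1$. Since $A$ lies on the circle, the product or sum of the roots gives the second intersection:
\[
x_B=x_A-\frac{2\bigl((x_A-x_O)+m(y_A-y_O)\bigr)}{1+m^2},
\]
which simplifies to \eqref{eq:xBcoord}.

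For $m_{BC}$, I would use the fact that $B$ and $C$ are the reflections of $A$ across lines through $O$. Equivalently, the chord $BC$ is perpendicular to the bisector direction determined by the angle pair. A cleaner route is to write $B-O$ and $C-O$ via rotation by the doubled angles and take the chord direction from the sum of those angles. Using $m_{AB}+m_{AC}=2V/U$ and $m_{AB}m_{AC}=W/U$, this gives $\tan(\theta_B+\theta_C)$ in terms of $U$, $V$, $W$. Combining it with the angle of $A-O$ yields \eqref{eq:mBC}. Alternatively, I would simply compute $(y_B-y_C)/(x_B-x_C)$ from the formulas and reduce it by Vieta's relations.

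For part (c), $A$ lies on a common tangent exactly when one of the tangents from $A$ to $S$ is also tangent to the circle at $A$, that is, when $B=A$. The tangent to the circle at $A$ is vertical iff $y_A=y_O$; otherwise its slope is $-(x_A-x_O)/(y_A-y_O)$. Substituting this slope into $Um^2-2Vm+W=0$ and clearing denominators gives $f_1(A,O)=0$. The vertical case is covered as follows: a vertical tangent from $A$ to $S$ occurs iff $U=0$, and when $y_A=y_O$ we have $f_1=U(x_A-x_O)^2=U$. This gives the stated disjunction.
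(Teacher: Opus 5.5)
Your argument is correct and is essentially the paper's. You restrict the Joachimsthal equation to lines through $A$ to get the slope quadratic, intersect a line through $A$ with $\mathcal{C}(O)$ to get \eqref{eq:xBcoord}, and use Vieta's relations to reduce $m_{BC}$ to \eqref{eq:mBC}. Part (c) comes from substituting the slope of the tangent to $\mathcal{C}(O)$ at $A$.

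In (a) you supply more than the paper does. The identities $U=cS_{AA}-S_y^2$, $V=S_xS_y-bS_{AA}$, $W=aS_{AA}-S_x^2$ are right. Since $U,V,W$ are the values of $\operatorname{adj}S$ on the vectors $(1,0,-x_A)$ and $(0,1,-y_A)$, whose cross product is $\mathbf{x}_A$, Jacobi's complementary-minor identity gives $V^2-UW=-S_{AA}\det S$.

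Your angle-doubling route in (b) is a valid shortcut for the paper's substitution into \eqref{eq:mBC1}. Let $\phi$ be the polar angle of $A-O$ and $\theta_{AB},\theta_{AC}$ the inclinations of the two tangents. Then the chord $BC$ has slope $-\cot(\theta_{AB}+\theta_{AC}-\phi)$, and $\tan(\theta_{AB}+\theta_{AC})=2V/(U-W)$ gives \eqref{eq:mBC} at once. Incidentally, your $x_B$ is correct, whereas the paper's intermediate \eqref{eq:xA+xB} has the sign of the $m_{AB}(y_A-y_O)$ term reversed.

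The one claim to retract is the last sentence of (c). What you have actually proved is that $A$ lies on a common tangent if and only if $f_1(A,O)=0$.
\begin{itemize}
\item When $y_A=y_O$ this coincides with $U=0$, because $f_1=U$ there.
\item When $y_A\neq y_O$, the condition $U=0$ alone does not produce a common tangent. So the ``if'' half of the stated disjunction does not follow, and it is not true in general.
\end{itemize}
For instance, take $S=x^2+y^2-1$ (so $U=x_A^2-1$, $V=x_Ay_A$, $W=y_A^2-1$), $O=(1,1)$ and $A=(1,2)\in\mathcal{C}(O)$. Then $U=0$ but $f_1(A,O)=W=3\neq 0$. The tangents from $A$ have slopes $\infty$ and $3/4$, and neither is the horizontal tangent to $\mathcal{C}(O)$ at $A$. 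The paper's one-line proof of (c) does not justify the $U=0$ disjunct either. The correct criterion is $f_1(A,O)=0$ alone, which is all that Lemma~\ref{lemm:3pons} uses.
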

\begin{proof}
\begin{itemize}
\item[(a)] The Joachimsthal equation for tangents from $A$,  
\begin{equation*}
    SS_{AA}=S_A^2,
\end{equation*}
reduces to the quadratic equation \eqref{eq:slopeeq}, yielding the stated formulas.
\item[(b)] Since $A\in \mathcal{C}(O)$, one obtains
\begin{equation}\label{eq:xA+xB}
    x_A+x_B=\frac{2(m_{AB}^2x_A+x_O + m_{AB} (y_A-y_O))}{m_{AB}^2 + 1}.
\end{equation}
which implies \eqref{eq:xBcoord}. The formula for $C$ is analogous. Substituting \eqref{eq:xBcoord} and $x_C$  into
\begin{equation}\label{eq:mBC1}
        m_{BC}=\frac{m_{AB}(x_{B}-x_A)-m_{AC}(x_{C}-x_A)}{x_{B}-x_{C}},
\end{equation}
and simplifying, one obtains
\begin{equation}\label{eq:mBC2}
        m_{BC}=\frac{(x_A - x_O) (1-m_{AB}m_{AC}) + (m_{AB} + m_{AC}) (y_A - y_O)}{(y_A - y_O) (1-m_{AB} m_{AC}) - (m_{AB} + m_{AC}) (x_A - x_O)}.
\end{equation}
\item[(c)] Using Vieta's relations
\begin{equation}\label{eq:vietem}
    m_{AB}+m_{AC}=\frac{2V}{U},\qquad m_{AB}m_{AC}=\frac{W}{U}
\end{equation}
equation  \eqref{eq:mBC2} reduces to \eqref{eq:mBC}.

The case $U = 0$ follows by rewriting \eqref{eq:slopeeq} as 
\begin{equation*}
    0-\frac{2V}{m}+\frac{W}{m^2}=0
\end{equation*}
whose roots are
\begin{equation*}
    \frac{1}{m} = \frac{2V}{W}, \qquad \frac{1}{m}=0.
\end{equation*}
Finally, a tangent from $A$ is common to $\mathcal{C}(O)$ and $S=0$ if and only if its slope coincides with the slope of the tangent to $\mathcal{C}(O)$ at $A$. This yields \eqref{eq:commtan}.
\end{itemize}
\end{proof}

\begin{remark}\label{rem:detD}
For the central conic \eqref{eq:conicD}, the determinant of the associated matrix is
\[
\det \mathcal{D} = -\frac{1}{ab}.
\]
In the elliptic case ($a > b > 0$), we have $\det \mathcal{D} < 0$, and the tangents from a point $A$ to $\mathcal{D}$ are real if and only if $S_{AA} \geq 0$ (cf.\ \eqref{eq:slopevals}). Accordingly, we assume the existence of a point $A$ satisfying $S_{AA} \geq 0$, equivalently, that the circle $\mathcal{C}(O)$ is not contained in the interior of $\mathcal{D}$.

In the hyperbolic case, we analogously assume the existence of a point $A \in \mathcal{C}(O)$ such that $S_{AA} \leq 0$.

Finally, a point $B$ lies on a tangent from $A$ to the conic $S = 0$ if and only if $S_{AB} = 0$.
\end{remark}

\subsection{Criterion for 3-Poncelet pairs} We will use the following result obtained in \cite{Dragovic-Murad2025a}.
\begin{proposition}\label{prop:sBBsCC}
Let $\mathcal C_1, \mathcal{C}_2$ be a pair of nowhere tangential smooth conics. A triangle $\triangle ABC$ may be inscribed in $\mathcal{C}_1$ and circumscribed about $\mathcal{C}_2$ if and only if 
\[
S_{BB}S_{CC}=S^2_{BC}
\]
where $S(x,y)=0$ is the equation of the inconic $\mathcal C_2$.
\end{proposition}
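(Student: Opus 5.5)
The plan is to read the statement as follows. A point $A\in\mathcal C_1$ is given, the two tangents from $A$ to $\mathcal C_2$ meet $\mathcal C_1$ again at $B$ and $C$, and $\triangle ABC$ is then inscribed in $\mathcal C_1$ with $AB$ and $AC$ already tangent to $\mathcal C_2$. So the triangle is circumscribed about $\mathcal C_2$ exactly when the third side $BC$ is tangent to $\mathcal C_2$. The proof therefore reduces to a tangency criterion for the line through two given points, which I would express through the Joachimsthal symbols.

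\emph{Step 1: nondegeneracy.} First I check that the configuration is nondegenerate. Since $\mathcal C_1$ and $\mathcal C_2$ are nowhere tangent, no tangent from $A$ to $\mathcal C_2$ is also tangent to $\mathcal C_1$ at $A$ (cf.\ Lemma~\ref{lemm:tangent}(c)). Hence $B\neq A$ and $C\neq A$. Moreover $B\neq C$, because the two tangents from $A$ are distinct lines meeting only at $A$. So $BC$ is a genuine line.

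\emph{Step 2: the quadratic form on $BC$.} I work in homogeneous coordinates, so that $S_{PQ}=\mathbf p^{T}S\mathbf q$ with $\mathbf p=(x_P,y_P,1)^T$, and parametrize the points of $BC$ as $\lambda\mathbf b+\mu\mathbf c$. Bilinearity of the symmetric form gives
\[
S(\lambda\mathbf b+\mu\mathbf c)=\lambda^2S_{BB}+2\lambda\mu S_{BC}+\mu^2S_{CC}.
\]
Homogeneous coordinates are used so that intersections at infinity (for instance with a hyperbola) are not lost.

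\emph{Step 3: tangency criterion.} Since $\mathcal C_2$ is smooth, a line is tangent to it iff it meets $\mathcal C_2$ in a single double point, that is, iff the binary quadratic form above has a double root $[\lambda:\mu]$. This happens iff its discriminant vanishes, $S_{BC}^2-S_{BB}S_{CC}=0$.

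Two edge cases need care. The form could vanish identically, but then the line would lie in $\mathcal C_2$, which is impossible for a nondegenerate conic. If $S_{BB}=0$, the criterion still holds, since vanishing of the discriminant then forces $S_{BC}=0$, so $B$ is the tangency point.

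Combining Steps 1--3 gives both directions of the equivalence. The only genuine subtlety, and the step I expect to need the most care, is the nondegeneracy in Step 1, namely that $B$, $C$ and $A$ are three distinct points. This is exactly where the hypothesis that the conics are nowhere tangent is used.
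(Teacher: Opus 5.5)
Your Steps 2--3 are sound. For $B\neq C$, the vanishing of the discriminant of $\lambda^2S_{BB}+2\lambda\mu S_{BC}+\mu^2S_{CC}$ says exactly that $C$ lies on the Joachimsthal pair of tangents $S\,S_{BB}=S_B^2$ from $B$, which is the argument behind this proposition. The paper does not prove it here; it imports it from \cite{Dragovic-Murad2025a}.

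The gap is Step 1, which you yourself single out as the crux. The hypothesis that $\mathcal C_1$ and $\mathcal C_2$ are nowhere tangent does \emph{not} make $A,B,C$ distinct. It only forbids a common point at which the conics share a tangent line. It does not forbid transversal intersection points, nor common tangent lines that touch the two conics at different points. This gives two degenerate cases.
(i) If $A\in\mathcal C_1\cap\mathcal C_2$, as in the intersecting configurations of Figure~\ref{fig:confocicrit}, then $S_{AA}=0$, the two tangents from $A$ coincide, and $B=C$.
(ii) If $A$ is the contact point on $\mathcal C_1$ of a common tangent, then $B=A$.
For example, the unit circle and the ellipse $x^2/4+4y^2=1$ meet in four distinct points, so they are nowhere tangent. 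Yet $y=\pm\tfrac12 x\pm\tfrac{\sqrt5}{2}$ are four real common tangents, and at $A=(-1/\sqrt5,\,2/\sqrt5)$ one tangent to the ellipse is the tangent to the circle at $A$. In both cases $S_{BB}S_{CC}=S_{BC}^2$ holds trivially: identically when $B=C$, and when $B=A$ because $C$ lies on the tangent $AC$ from $A$. But there is no triangle. So the ``if'' direction, as you assert it for every $A\in\mathcal C_1$, is false at these finitely many points. Your appeal to Lemma~\ref{lemm:tangent}(c) points to precisely the case you must exclude, not one the hypothesis rules out.

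The fix is to make nondegeneracy a hypothesis rather than a consequence. Assume $A\notin\mathcal C_2$ and that $A$ does not lie on a common tangent of $\mathcal C_1$ and $\mathcal C_2$; equivalently, assume $A,B,C$ are distinct. Then your Steps 2--3 prove the equivalence. The ``only if'' direction needs nothing extra, since a genuine triangle already has $B\neq C$.

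This is also how the paper uses the proposition. In the proof of Lemma~\ref{lemm:3pons} it chooses $A$ with $S_{AA}f_1(x_A,y_A)\neq 0$ precisely so that $A,B,C$ are distinct. The factorization in Lemma~\ref{lemm:sBBsCC} shows why: $S_{AA}$ and $f_1$ (the common-tangent condition) are exactly the factors through which $S_{BB}S_{CC}-S_{BC}^2$ vanishes in the degenerate cases (i) and (ii).
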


\begin{lemma}\label{lemm:sBBsCC} 
Let $\mathcal{C}$ be the unit circle with center $O$, and let $\mathcal{D}$ be the central conic given by \eqref{eq:conicD}. 
If the tangents from a point $A \in \mathcal{C}$ to $\mathcal{D}$ meet $\mathcal{C}$ again at (not necessarily distinct) points $B$ and $C$, then
\[
S_{BB}S_{CC} - S_{BC}^2
=
-\frac{16\, S_{AA}\, f_1(x_A,y_A)\, f_2(x_O,y_O)}{\bigl(f_3(x_A,y_A)\bigr)^2},
\]
where
\begin{subequations}\label{eq:sBBsCCf}
\begin{align}
f_1(x_A,y_A) 
&:= a(x_A - x_O)^2 + b(y_A - y_O)^2 
- \bigl(1 + x_O(x_A - x_O) + y_O(y_A - y_O)\bigr)^2, \label{eq:sBBsCCfa}\\
f_2(x_O,y_O) 
&:= (x_O^2 + y_O^2 - 1)^2 - 2(a - b)(x_O^2 - y_O^2) 
+ (a - b)^2 - 2(a + b), \label{eq:sBBsCCfb}\\
f_3(x_A,y_A) 
&:= (x_A^2 + y_A^2 - a + b)^2 + 4y_A^2(a - b).
\label{eq:sBBsCCfc}
\end{align}
\end{subequations}
\end{lemma}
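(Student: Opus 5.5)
The identity is purely algebraic, so I will obtain it by direct computation, using Lemma~\ref{lemm:tangent} for coordinates and organizing the expression around the symmetric functions of the two tangent slopes. For $\mathcal D$ in \eqref{eq:conicD} the matrix is $\mathrm{diag}(1/a,1/b,-1)$, and Lemma~\ref{lemm:tangent}(a) gives $U=(x_A^2-a)/(ab)$, $W=(y_A^2-b)/(ab)$, $V=x_Ay_A/(ab)$. I would multiply through by $ab$ and write $U'=x_A^2-a$, $V'=x_Ay_A$, $W'=y_A^2-b$.

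\textbf{Step 1: parametrize $B$ and $C$.} By Lemma~\ref{lemm:tangent}(b), $B=A+t_B(1,m_{AB})$ with $t_B=-2\bigl((x_A-x_O)+m_{AB}(y_A-y_O)\bigr)/(1+m_{AB}^2)$, and similarly for $C$. Writing $S_{BB}$, $S_{CC}$, $S_{BC}$ for $\mathcal D$ via Joachimsthal bilinearity gives
\[
S_{BB}=S_{AA}+2t_BS_A(1,m_{AB})+t_B^2Q(1,m_{AB}),
\]
where $Q(u,v)=u^2/a+v^2/b$ is the quadratic part. The middle term is the directional derivative $(x_A/a+m y_A/b)$. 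The key simplification is that $B$ lies on a tangent from $A$, so Remark~\ref{rem:detD} gives $S_{AB}=0$. Hence $S_{AA}+t_B(x_A/a+m_{AB}y_A/b)=0$, which lets me eliminate $t_B$ in favor of $S_{AA}$ and the slope. Then $S_{BB}=-S_{AA}+t_B^2Q(1,m_{AB})$. Similarly, $S_{BC}=S_{AA}+t_B(\cdots)+t_C(\cdots)+t_Bt_C\,Q\bigl((1,m_{AB}),(1,m_{AC})\bigr)=-S_{AA}+t_Bt_C\,Q(\cdot,\cdot)$.

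\textbf{Step 2: reduce to symmetric functions.} The quantity
\[
S_{BB}S_{CC}-S_{BC}^2=-S_{AA}\bigl(t_B^2Q_{B}+t_C^2Q_C-2t_Bt_CQ_{BC}\bigr)+t_B^2t_C^2\bigl(Q_BQ_C-Q_{BC}^2\bigr).
\]
The last bracket is a Gram determinant, equal to $(m_{AB}-m_{AC})^2/(ab)$. Since $(m_{AB}-m_{AC})^2=4(V^2-UW)/U^2=-4S_{AA}\det\mathcal D/U^2$, every term carries a factor $S_{AA}$, which explains that factor in the statement. After factoring out $S_{AA}$, the remainder is symmetric in the two slopes. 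I would express it through $m_{AB}+m_{AC}=2V/U$ and $m_{AB}m_{AC}=W/U$ using \eqref{eq:vietem}. The denominators $(1+m^2)$ combine into $(1+m_{AB}^2)(1+m_{AC}^2)=\bigl((U-W)^2+4V^2\bigr)/U^2$. A short check shows this equals $f_3/(ab\,U)^2$ up to normalization, since $(x_A^2-y_A^2-a+b)^2+4x_A^2y_A^2=(x_A^2+y_A^2-a+b)^2+4y_A^2(a-b)$. This produces the denominator $f_3^2$.

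\textbf{Step 3: factor the numerator; this is the main obstacle.} The numerator is a polynomial in $x_A,y_A,x_O,y_O,a,b$. It must be shown to factor as $-16f_1f_2$, and only modulo the relation $A\in\mathcal C$, that is, $(x_A-x_O)^2+(y_A-y_O)^2=1$.

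First, I would substitute $x_A=x_O+\cos\theta$, $y_A=y_O+\sin\theta$ to make the circle constraint automatic. Second, I would identify the factor $f_1$ by the geometric reasoning in Lemma~\ref{lemm:tangent}(c). When $A$ lies on a common tangent, $B$ or $C$ coincides with $A$, and then $S_{BB}S_{CC}=S_{BC}^2$ trivially. Indeed $f_1$ in \eqref{eq:sBBsCCfa} is precisely \eqref{eq:commtan}, rewritten with the circle relation, which explains that vanishing. Third, I would divide by $f_1$ as a polynomial in $(\cos\theta,\sin\theta)$. The quotient should be independent of $\theta$, which reflects the porism, and I would confirm this by checking that it is constant in $\theta$, for instance by evaluating at several $\theta$. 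I would then compute it at a convenient point, say $\theta$ with $y_A=y_O$, to identify it as $f_2(x_O,y_O)$ with constant $-16$.

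In practice I expect this factorization to require computer algebra or a careful grouping of terms. The honest difficulty is verifying $\theta$-independence of the quotient by hand.
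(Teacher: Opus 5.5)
Your Step~1 rests on a false relation. A point $B$ on a tangent from $A$ does not in general satisfy $S_{AB}=0$. The line $S_A(x,y)=0$ is the polar of $A$ (the chord of contact), and it meets each tangent from $A$ only at the point of contact. Your $B$ is instead the second intersection of that tangent with $\mathcal C$. The correct condition is the Joachimsthal relation $S_{AA}S_{BB}=S_{AB}^2$. In your notation, with $L_B=x_A/a+m_{AB}y_A/b$ and $Q_B=1/a+m_{AB}^2/b$, tangency of the line $A+t(1,m_{AB})$ reads $L_B^2=S_{AA}Q_B$, not $S_{AA}+t_BL_B=0$. The last sentence of Remark~\ref{rem:detD}, which you invoked, is correct only for the polar of $A$, or when $A\in\mathcal D$.

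Consequently the formulas $S_{BB}=-S_{AA}+t_B^2Q_B$ and $S_{BC}=-S_{AA}+t_Bt_CQ_{BC}$ are wrong. Your Step~2 expression is then not $S_{BB}S_{CC}-S_{BC}^2$, and your explanation of the factor $S_{AA}$ fails with it. Concretely, take $a=b=\tfrac14$, $O=(0,0)$, $A=(1,0)$. Then $m_{AB},m_{AC}=\pm1/\sqrt3$, $t_B=t_C=-\tfrac32$, and $B,C=(-\tfrac12,\mp\tfrac{\sqrt3}{2})$; this is the classical equilateral case with $f_2=0$. Here $S_{AB}=-3$, and the true $S_{BB}$ is $3$ while your formula gives $9$. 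Your Step~2 expression evaluates to $72$, although $S_{BB}S_{CC}-S_{BC}^2=0$.

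Step~3 is also inconsistent with Step~1. Your claim that $S_{BB}S_{CC}=S_{BC}^2$ holds ``trivially'' when $B=A$ uses exactly $S_{AA}S_{CC}=S_{AC}^2$. Under your own relation $S_{AC}=0$, that quantity would be $S_{AA}S_{CC}\neq0$.

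To repair this, keep the linear terms, $S_{BB}=S_{AA}+2t_BL_B+t_B^2Q_B$ and $S_{BC}=S_{AA}+t_BL_B+t_CL_C+t_Bt_CQ_{BC}$, and impose $L_B^2=S_{AA}Q_B$, $L_C^2=S_{AA}Q_C$. A cleaner option is the identity $S_{BB}S_{CC}-S_{BC}^2=\ell^{T}\operatorname{adj}(\mathcal D)\,\ell$, where $\ell=(x_B,y_B,1)^T\times(x_C,y_C,1)^T$ is the line $BC$. For \eqref{eq:conicD} this equals $(\ell_3^2-a\ell_1^2-b\ell_2^2)/(ab)$, which is symmetric in $B$ and $C$ and so can be expressed through \eqref{eq:vietem}.

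Your remaining ingredients are correct:
\begin{itemize}
\item the Gram determinant $(m_{AB}-m_{AC})^2/(ab)$;
\item the identity $(1+m_{AB}^2)(1+m_{AC}^2)=f_3/(abU)^2$, which also shows $f_3$ vanishes only at the foci, where the tangents are isotropic (you should state, as the paper does, why $f_3\neq0$ when real tangents exist);
\item the identification of $f_1$ with \eqref{eq:commtan} on $\mathcal C$.
\end{itemize}
After the repair, the final factorization is, as in the paper, a computer-algebra verification. Note that checking $\theta$-independence of the quotient at finitely many $\theta$ is a proof only together with a bound on its degree as a trigonometric polynomial.
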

\begin{proof}
Take $A\in \mathcal{C}$. Note that $f_3(x_A,y_A) = 0$ if and only if 
\begin{center}
    either $y_A=0$ and $x_A^2=a-b$, or $a=b$ and $x_A=0,\,y_A=0$.
\end{center}
 In either case, 
\begin{equation*}
    S_{AA}=-\frac{b}{a}<0
\end{equation*}
if $b>0$. Thus, for real tangents, in the case of an ellipse, $f_3(x_A,y_A) \neq 0$ (see Remark \ref{rem:detD}).  

A similar conclusion also holds for the case of hyperbola. 

Now, we use Lemma \ref{lemm:tangent} to calculate  $B=(x_B,y_B)$ and $C=(x_C,y_C)$. The rest follows from a computer algebra simplification.
\end{proof}

The following lemma gives the necessary and sufficient condition for a triangle to be inscribed in a circle and circumscribed about a central conic.

\begin{lemma}\label{lemm:3pons} 
Let $\mathcal{C}$ be a circle of unit radius with center $O$, and let $\mathcal{D}$ be a central conic given by \eqref{eq:conicD}. Then $(\mathcal{C},\mathcal{D})$ is a $3$-Poncelet pair if and only if
\begin{equation}\label{eq:3pons}
\left(x_O^2+y_O^2-1\right)^2 - 2(a-b)\left(x_O^2-y_O^2\right) + (a-b)^2 - 2(a+b) = 0.
\end{equation}
\end{lemma}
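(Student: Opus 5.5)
The plan is to combine Proposition~\ref{prop:sBBsCC} with the factorization of Lemma~\ref{lemm:sBBsCC}. Fix a point $A\in\mathcal C$ from which real tangents to $\mathcal D$ exist; Remark~\ref{rem:detD} guarantees such a point under the standing assumptions. Let $B,C$ be the second intersections with $\mathcal C$ of the two tangents from $A$. By construction $AB$ and $AC$ are tangent to $\mathcal D$. So $\triangle ABC$ is a Poncelet triangle exactly when $BC$ is also tangent to $\mathcal D$. By Proposition~\ref{prop:sBBsCC} (equivalently, the Joachimsthal pair-of-tangents equation from $B$ evaluated at $C$), this holds precisely when $S_{BB}S_{CC}-S_{BC}^2=0$. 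Lemma~\ref{lemm:sBBsCC} then turns this into
\[
S_{AA}\,f_1(x_A,y_A)\,f_2(x_O,y_O)=0,
\]
where the denominator $f_3(x_A,y_A)$ is nonzero by the argument in the proof of that lemma.

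\emph{Sufficiency.} If \eqref{eq:3pons} holds, then $f_2(x_O,y_O)=0$, and the product vanishes for every admissible $A$. Take any $A\in\mathcal C$ with $S_{AA}\neq 0$ and real tangents; an open arc of such points exists unless $\mathcal C$ lies entirely on $\mathcal D$, which is impossible for a circle and a central conic. Among these, avoid the finitely many points with $f_1(x_A,y_A)=0$ or $U=0$, since by Lemma~\ref{lemm:tangent}(c) these correspond to common tangents. For such $A$ the points $A,B,C$ are distinct, and $\triangle ABC$ is a genuine Poncelet triangle. Hence $(\mathcal C,\mathcal D)$ is a $3$-Poncelet pair.

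\emph{Necessity.} Suppose a Poncelet triangle $\triangle ABC$ exists, and denote its vertex by $A$. Its sides through $A$ are the two tangents from $A$, so the product above vanishes at this $A$. Two cases have to be excluded.
\begin{itemize}
\item[(i)] If $S_{AA}=0$, then $A\in\mathcal D$. The two tangents from $A$ coincide, and the triangle degenerates.
\item[(ii)] If $f_1(x_A,y_A)=0$, then the vertex lies on a common tangent of $\mathcal C$ and $\mathcal D$, by comparison with \eqref{eq:commtan} after expanding $f_1$ for the conic \eqref{eq:conicD}. In that case $B=A$ or $C=A$, again a degenerate triangle.
\end{itemize}
A more robust route is the porism-style argument. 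The function $A\mapsto S_{BB}S_{CC}-S_{BC}^2$ is real-analytic on the admissible arc. If it vanished only on the finite zero set of $S_{AA}f_1$, no nondegenerate triangle could exist. So the nondegenerate triangle forces $f_2(x_O,y_O)=0$, which is \eqref{eq:3pons}.

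The main obstacle is the necessity direction, specifically ruling out the degenerate factors $S_{AA}$ and $f_1$. This requires checking that $f_1$ as defined in \eqref{eq:sBBsCCfa} agrees, up to a nonzero factor, with the common-tangent polynomial \eqref{eq:commtan} on $\mathcal C$. It also requires handling the $U=0$ (vertical tangent) case through the alternative slope formula \eqref{eq:vertslope}, either by a continuity argument or by rotating coordinates.
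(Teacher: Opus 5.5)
Your proposal is correct and follows the paper's own proof. Sufficiency picks a generic $A\in\mathcal C$ off the finitely many zeros of $S_{AA}f_1$ and applies Proposition~\ref{prop:sBBsCC}; necessity notes that if $f_2\neq 0$, Lemma~\ref{lemm:sBBsCC} forces $S_{AA}=0$ or $f_1=0$ at a vertex, and either one degenerates the triangle. Your extra handling of real tangents, the $U=0$ points, and the identification of $f_1$ with the common-tangent polynomial only fills in details the paper leaves implicit.

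One small correction concerns your claim that an open arc of points with real, distinct tangents exists "unless $\mathcal C$ lies entirely on $\mathcal D$". That is not why the arc exists. It exists because of the standing assumption of Remark~\ref{rem:detD} together with the non-tangency hypothesis of Proposition~\ref{prop:sBBsCC}. Without these the ``if'' direction genuinely fails: the unit circle internally tangent to $x^2/4+y^2=1$ satisfies \eqref{eq:3pons}, yet it admits no real Poncelet triangle.
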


\begin{proof}
Equation \eqref{eq:3pons} is equivalent to the condition
\[
f_2(O,a,b)=0
\]
(see \eqref{eq:sBBsCCfb}).

Assume first that \(f_2(O,a,b)=0\). Then, by Lemma~\ref{lemm:sBBsCC},
\[
S_{BB}S_{CC}-S_{BC}^2=0.
\]
Since \(f_2(O,a,b)\) is independent of the choice of \(A\), and the equations
\[
S(x,y)=0, \qquad f_1(x_A,y_A)=0
\]
define conics, there are at most four points \(A\in\mathcal C\) for which
\[
S_{AA}\,f_1(x_A,y_A)=0.
\]
Hence we may choose \(A\in\mathcal C\) such that
\[
S_{AA}\,f_1(x_A,y_A)\neq 0.
\]
For this choice, \(A,B,C\) are distinct, and Proposition~\ref{prop:sBBsCC} implies that \(\triangle ABC\) is circumscribed about \(\mathcal D\). Therefore \((\mathcal C,\mathcal D)\) is a \(3\)-Poncelet pair.

Conversely, suppose that \((\mathcal C,\mathcal D)\) is a \(3\)-Poncelet pair, and let \(\triangle ABC\) be a nondegenerate triangle inscribed in \(\mathcal C\) and circumscribed about \(\mathcal D\). By Proposition~\ref{prop:sBBsCC},
\[
S_{BB}S_{CC}=S_{BC}^2.
\]
If \(f_2(x_O,y_O)\neq 0\), then Lemma~\ref{lemm:sBBsCC} yields
\[
S_{AA}=0
\quad\text{or}\quad
f_1(x_A,y_A)=0.
\]
In either case, \(\triangle ABC\) is degenerate, a contradiction. Hence \(f_2(O,a,b)=0\), which is equivalent to \eqref{eq:3pons}.
\end{proof}

Since the condition \eqref{eq:3pons} is independent of the choice of the initial point \(A\in\mathcal C\), it follows that whenever it is satisfied, every admissible choice of \(A\) generates a triangle inscribed in \(\mathcal C\) and circumscribed about \(\mathcal D\). Hence there exist infinitely many such triangles. Conversely, the existence of one nondegenerate such triangle implies \eqref{eq:3pons} by Lemma~\ref{lemm:3pons}. Therefore, \eqref{eq:3pons} is the necessary and sufficient condition for \((\mathcal C,\mathcal D)\) to form a \(3\)-Poncelet pair.

By applying the scaling transformation
\begin{align*}
x &\mapsto \frac{x}{R}, \qquad
y \mapsto \frac{y}{R}, \qquad
x_O \mapsto \frac{x_O}{R}, \qquad
y_O \mapsto \frac{y_O}{R},\\
a &\mapsto \frac{a}{R^2}, \qquad
b \mapsto \frac{b}{R^2},
\end{align*}
one obtains the corresponding condition for a circle of radius \(R\):
\begin{equation}\label{eq:3ponsscaled}
\left(x_O^2+y_O^2-R^2\right)^2
-2(a+b)R^2
+(a-b)^2
-2(a-b)(x_O^2-y_O^2)=0.
\end{equation}

\subsection{The Generalized Chapple--Euler Relation}

Let \(F_{\pm}=\bigl(\pm\sqrt{a-b},0\bigr)\) denote the foci of the central conic \(\mathcal D\), and let \(d_{\pm}\) be their distances from the center \(O\) of the circumcircle. Since
\[
d_{\pm}^{\,2}
=
\left(x_O\mp\sqrt{a-b}\right)^2+y_O^2,
\]
substituting these expressions into \eqref{eq:3ponsscaled} yields the following geometric characterization of circle--conic \(3\)-Poncelet pairs.

\begin{theorem}[Generalized Chapple--Euler Formula I]\label{thm:genchappleeuler}
A triangle with circumradius \(R\) may be circumscribed about a central conic \(\mathcal D\) if and only if
\begin{equation}\label{eq:genchappleeuler}
\left(R^2-d_{+}^2\right)\left(R^2-d_{-}^2\right)
=
4\varepsilon\beta^2R^2,
\end{equation}
where \(\beta\) is the semi-minor axis of \(\mathcal D\), \(d_{+}\) and \(d_{-}\) are the distances from the circumcenter of the triangle to the foci of \(\mathcal D\), and
\[
\varepsilon=
\begin{cases}
1, & \text{if }\mathcal D\text{ is an ellipse},\\
-1, & \text{if }\mathcal D\text{ is a hyperbola}.
\end{cases}
\]
\end{theorem}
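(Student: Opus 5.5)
The plan is to derive \eqref{eq:genchappleeuler} directly from the scaled criterion \eqref{eq:3ponsscaled}. By Lemma~\ref{lemm:3pons} and the scaling argument that follows it, a triangle with circumradius $R$ and circumcenter $O=(x_O,y_O)$ is circumscribed about $\mathcal D$ if and only if \eqref{eq:3ponsscaled} holds. Since the family is nonempty exactly when one such triangle exists, it suffices to show that the left-hand side of \eqref{eq:3ponsscaled} equals
\[
\left(R^2-d_+^2\right)\left(R^2-d_-^2\right)-4\varepsilon\beta^2R^2 .
\]

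Set $c^2=a-b$, so that $F_\pm=(\pm c,0)$, and write $\rho^2=x_O^2+y_O^2$. Then $d_\pm^2=\rho^2+c^2\mp 2cx_O$, hence $R^2-d_\pm^2=(R^2-\rho^2-c^2)\pm 2cx_O$, and
\[
\left(R^2-d_+^2\right)\left(R^2-d_-^2\right)=(\rho^2-R^2+c^2)^2-4c^2x_O^2 .
\]
Expanding gives $(\rho^2-R^2)^2+2c^2(\rho^2-R^2)+c^4-4c^2x_O^2$. Using $\rho^2-2x_O^2=-(x_O^2-y_O^2)$, this equals
\[
(\rho^2-R^2)^2-2(a-b)(x_O^2-y_O^2)+(a-b)^2-2(a-b)R^2 .
\]
Comparing with \eqref{eq:3ponsscaled}, the only remaining term is $-2(a+b)R^2+2(a-b)R^2=-4bR^2$. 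Therefore \eqref{eq:3ponsscaled} is equivalent to $\left(R^2-d_+^2\right)\left(R^2-d_-^2\right)=4bR^2$.

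It remains to interpret $b$. For an ellipse ($0<b<a$) we have $b=\beta^2$, so $\varepsilon=1$. For a hyperbola ($b<0$), the semi-minor (conjugate) semi-axis satisfies $\beta^2=-b$, so $4bR^2=-4\beta^2R^2$ and $\varepsilon=-1$. The case of a hyperbola with $a<0<b$ does not arise under the standing convention for \eqref{eq:conicD}, where $b<0$ in the hyperbolic case. One could add a remark that setting $c\to0$ (so $a=b=r^2$) gives $d_+=d_-=d$, and the relation reduces to \eqref{eq:chapplelanden}.

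The main obstacle is purely bookkeeping: the sign in the $x_O^2-y_O^2$ term, and checking that the focal setup matches the orientation $a>b$ used in \eqref{eq:conicD}. I would verify this by plugging in a concentric test case ($x_O=y_O=0$), which gives $(R^2-c^2)^2=4bR^2$, i.e.\ $R=\sqrt a+\sqrt b$ for the ellipse. This matches the known concentric $3$-Poncelet condition.
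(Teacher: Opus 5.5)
Your proposal is correct and follows the paper's route: substitute $d_\pm^2=(x_O\mp\sqrt{a-b})^2+y_O^2$ into \eqref{eq:3ponsscaled}, with your expansion supplying the algebra the paper leaves implicit and your identification $b=\varepsilon\beta^2$ (conjugate semi-axis in the hyperbolic case) settling the sign. One cosmetic slip outside the proof itself: your concentric check $(R^2-c^2)^2=4bR^2$ yields both $R=\sqrt a+\sqrt b$ and $R=\sqrt a-\sqrt b$, consistent with Corollary~\ref{cor:sumdiffsemi}, not only the first.
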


\begin{remark}\label{rem:goldbergzwas}
In the case $\varepsilon=1$, formula \eqref{eq:genchappleeuler} was previously obtained by Goldberg and Zwas \cite{Goldberg-Zwas1976} via the characteristic polynomial 
\[
P(\lambda)=\det(\lambda\mathcal{C}+\mathcal{D}),
\]
where $\mathcal{C}$ is the circumcircle of a triangle circumscribed about an ellipse $\mathcal{D}$, under the assumption that $\mathcal{D}$ is entirely contained in $\mathcal{C}$. 

The proof of Theorem~\ref{thm:genchappleeuler} (see Lemma~\ref{lemm:3pons}) makes no assumptions on the position of the foci relative to the circumcircle, nor on whether the central conic is an ellipse or a hyperbola. Therefore, \eqref{eq:genchappleeuler} remains valid for ellipses not necessarily contained in the circumcircle and extends verbatim to hyperbolas upon setting $\varepsilon=-1$.
\end{remark}

\begin{remark}\label{rem:genchappeuler}
In the limiting case when the foci of the ellipse coincide, the conic $\mathcal{D}$ reduces to a circle of radius $\beta=r$, and $d_+=d_-=d$ denotes the distance between the centers. In this case, \eqref{eq:genchappleeuler} reduces to the classical Chapple--Euler formula \eqref{eq:chapplelanden}.
\end{remark}

\begin{corollary}\label{cor:nondenconic}
Let $\mathcal C$ be a circle, and let $F_-,F_+$ be two distinct points in the plane. Then there exists a unique conic $\mathcal D$ with foci $F_+$ and $F_-$ such that $(\mathcal C,\mathcal D)$ forms a $3$-Poncelet pair if and only if neither of the following conditions is satisfied:
\begin{itemize}
    \item[(a)] $F_-$ or $F_+$ (or both) lies on $\mathcal C$;
    \item[(b)] $F_+$ and $F_-$ are inverse points with respect to $\mathcal C$.
\end{itemize}
\end{corollary}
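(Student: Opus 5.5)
We fix coordinates so that the center of $\mathcal D$ is the midpoint of $F_-F_+$ and the focal axis is the $x$-axis. Then $F_\pm=(\pm c,0)$ with $c>0$, and the circumcircle $\mathcal C$ has center $O=(x_O,y_O)$ and radius $R$. Every conic with foci $F_\pm$ has the form \eqref{eq:conicD} with $a-b=c^2$. It is therefore parametrized by the single real number $b$, which we write as $b=\varepsilon\beta^2$. The conic is a genuine (nondegenerate) central conic exactly when $b\neq 0$ and $a=b+c^2\neq 0$, i.e.\ $b\notin\{0,-c^2\}$. For $b>0$ it is an ellipse, and for $-c^2<b<0$ or $b<-c^2$ one must check the sign convention. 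Since $a>0$ is needed for real foci on the $x$-axis in the hyperbolic case, the admissible hyperbolas are those with $-c^2<b<0$; the value $b<-c^2$ gives $a<0$, which is not a real conic of this type.

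The plan is to use Theorem~\ref{thm:genchappleeuler}, or equivalently \eqref{eq:3ponsscaled}, which has the form
\[
\left(R^2-d_+^2\right)\left(R^2-d_-^2\right)=4bR^2 .
\]
Indeed $4\varepsilon\beta^2R^2=4bR^2$, and this identity is just \eqref{eq:3ponsscaled} after substituting $a-b=c^2$ and $a+b=2b+c^2$. The key observation is that, once $\mathcal C$ and $F_\pm$ are fixed, this condition is \emph{linear} in $b$ with nonzero coefficient $4R^2$. Hence there is exactly one candidate,
\[
b_0=\frac{\left(R^2-d_+^2\right)\left(R^2-d_-^2\right)}{4R^2},
\]
and by Lemma~\ref{lemm:3pons} this gives uniqueness. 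Existence holds precisely when $b_0$ corresponds to a genuine conic, so the corollary reduces to identifying the two degenerate values.

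The first degenerate value is $b_0=0$. This happens if and only if $R=d_+$ or $R=d_-$, that is, a focus lies on $\mathcal C$; this is condition (a).

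The second degenerate value is $b_0=-c^2$, and this is the main computational step. Let $\theta$ be the angle $\angle F_+OF_-$. By the law of cosines,
\[
4c^2=|F_+F_-|^2=d_+^2+d_-^2-2d_+d_-\cos\theta .
\]
Substituting this into $\left(R^2-d_+^2\right)\left(R^2-d_-^2\right)+4c^2R^2=0$ and expanding, the terms in $d_+^2+d_-^2$ cancel, leaving
\[
R^4-2R^2d_+d_-\cos\theta+d_+^2d_-^2=0 ,
\]
which can be rewritten as
\[
\left(R^2-d_+d_-\cos\theta\right)^2+d_+^2d_-^2\sin^2\theta=0 .
\]
This sum of squares vanishes if and only if $\sin\theta=0$ and $d_+d_-\cos\theta=R^2$. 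That forces $\cos\theta=1$ and $d_+d_-=R^2$. So $O$, $F_+$, $F_-$ lie on one ray from $O$ with $d_+d_-=R^2$, which says exactly that $F_\pm$ are inverse with respect to $\mathcal C$; this is condition (b). The degenerate case $O=F_\pm$ is already covered, since then $d=0\neq R$.

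It remains to handle the value range $b_0<-c^2$. If this case really leads to non-existence, the corollary as stated would be false; so I expect to show it cannot occur, or rather that it does not arise given the analysis above. By the same computation,
\[
\left(R^2-d_+^2\right)\left(R^2-d_-^2\right)+4c^2R^2=\left(R^2-d_+d_-\cos\theta\right)^2+d_+^2d_-^2\sin^2\theta\ge 0 .
\]
Hence $b_0\ge -c^2$ always, and the range $b_0<-c^2$ never occurs. This inequality is what guarantees that, outside (a) and (b), $b_0$ lies in the admissible set: $b_0>0$ gives an ellipse and $-c^2<b_0<0$ gives a hyperbola. Lemma~\ref{lemm:3pons} then yields the required unique conic, completing the argument.
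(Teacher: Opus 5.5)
Your proof is correct and is essentially the paper's argument. You place $F_\pm=(\pm c,0)$, read off the unique value of $b=\varepsilon\beta^2$ from \eqref{eq:genchappleeuler} (which is linear in $b$), and match the degenerate values $b=0$ and $a=b+c^2=0$ with conditions (a) and (b) via a sum-of-squares identity. Your identity $(R^2-d_+d_-\cos\theta)^2+d_+^2d_-^2\sin^2\theta=4aR^2$ is the paper's $(x_O^2+y_O^2-R^2-a+b)^2+4(a-b)y_O^2=4aR^2$ written invariantly, since $d_+d_-\cos\theta=x_O^2+y_O^2-c^2$ and $d_+d_-\sin\theta=2c|y_O|$; your explicit observation that it forces $b_0\ge -c^2$, so the imaginary case $a<0$ never occurs, spells out a point the paper leaves implicit.
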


\begin{proof}
We first assume that none of the conditions in (a)-(b) is satisfied. Suppose that $\beta$ is the semi-minor axis length of $\mathcal D$. Now we construct the conic $\mathcal{D}$ analytically. Place the midpoint of $\overline{F_-F_+}$ at the origin, take the line $F_-F_+$ as the $x$-axis, and its perpendicular bisector as the $y$-axis. Let $2c=\operatorname{dist}(F_+,F_-)$.

Set
\begin{align*}
    b &= \varepsilon \beta^2, \\
    a &= b + c^2
\end{align*}
where $\varepsilon \beta^2$ can be obtained from \eqref{eq:genchappleeuler}. Then the conic defined by \eqref{eq:conicD} has foci $F_\pm$. This proves the existence and uniqueness of $\mathcal D$.

The conic $\mathcal{D}$ fails to exist if and only if either $a=0$ or $b=0$.

\noindent
By \eqref{eq:genchappleeuler}, $b=0$ if and only if $R=d_+$ or $R=d_-$. Therefore, $F_+ \in \mathcal C$ or  $F_- \in \mathcal C$. This proves (a). 

For (b), observe that \eqref{eq:3ponsscaled} can be put in the form
\[
\left(x_O^2+y_O^2 - R^2 - a + b\right)^2 + 4(a-b)y_O^2 = 4aR^2.
\]
Since $a \geq b$, the left-hand side is a sum of nonnegative terms, it follows that $a=0$ if and only if either
\[
y_O = 0, \qquad R^2 = x_O^2 - a + b = d_+d_-,
\]
or
\[
a = b, \qquad R = \sqrt{x_O^2 + y_O^2} = d_+ = d_-.
\]
In the latter case, the foci coincide $(c=0)$, and the associated confocal family degenerates to concentric circles with $R=d$. The former case proves (b). See Figure \ref{fig:confocicrit}.
\end{proof}

\subsection{Foci, Conic Type, and Geometric Classification}

It was proved in \cite{Dragovic-Radnovic2024} that, given a unit circle and two points in its open unit disk, there exists a unique ellipse with these points as foci such that the ellipse and the circle form a $3$-Poncelet pair. This naturally raises the question of how the relative position of the foci with respect to the circle determines the type of the central conic.

In the next theorem, we provide a complete characterization of this dependence. We describe how the location of the foci determines whether the conic is an ellipse or a hyperbola.

\begin{theorem}\label{thm:confocicrit}
Given a triangle and two distinct points in the plane, if the triangle is circumscribed about a conic $\mathcal{D}$ with those two points as its foci,  then
\begin{itemize}
    \item[(a)] $\mathcal{D}$ is an ellipse if and only if both foci lie either in the interior or in the exterior of the circumcircle of the triangle;
    \item[(b)] $\mathcal{D}$ is a hyperbola if and only if one focus lies in the interior of the circumcircle of the triangle and the other lies in its exterior.
\end{itemize}
\end{theorem}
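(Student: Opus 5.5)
The plan is to read off the conic type from the sign of the left-hand side of the generalized Chapple--Euler relation, Theorem~\ref{thm:genchappleeuler}. Suppose the triangle has circumradius $R$ and is circumscribed about a conic $\mathcal D$ with foci $F_\pm$. Then Theorem~\ref{thm:genchappleeuler} gives
\[
\left(R^2-d_{+}^2\right)\left(R^2-d_{-}^2\right)=4\varepsilon\beta^2R^2 ,
\]
where $d_\pm=\operatorname{dist}(O,F_\pm)$ and $O$ is the circumcenter. Since $\beta>0$ for a nondegenerate central conic and $R>0$, the right-hand side is nonzero. Its sign is exactly $\varepsilon$: positive for an ellipse, negative for a hyperbola. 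As a consequence, neither focus lies on the circumcircle, in agreement with Corollary~\ref{cor:nondenconic}(a).

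Next we interpret the sign of the left-hand side geometrically. The factor $R^2-d_\pm^2$ is positive exactly when $F_\pm$ lies in the open interior of the circumcircle, and negative exactly when it lies in the exterior. Hence the product is positive if and only if both foci lie on the same side, both inside or both outside. It is negative if and only if one focus is inside and the other outside.

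Combining the two observations gives the result. If $\mathcal D$ is an ellipse, then $\varepsilon=1$, the product is positive, and the foci lie on the same side; this is the forward direction of (a). If $\mathcal D$ is a hyperbola, then $\varepsilon=-1$, the product is negative, and the foci are separated by the circle; this is the forward direction of (b). For the converses, note that a conic with two distinct foci circumscribed by the triangle is either an ellipse or a hyperbola; the parabola has only one focus and the circle has coincident foci, so both are excluded. The two configurations of the foci are mutually exclusive and exhaustive, given that no focus lies on the circle. Therefore the foci lying on the same side forces $\mathcal D$ to be an ellipse, and the foci being separated forces $\mathcal D$ to be a hyperbola.

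The main obstacle is making sure Theorem~\ref{thm:genchappleeuler} applies in full generality: in arbitrary position relative to the circle, and for hyperbolas. Remark~\ref{rem:goldbergzwas} addresses exactly this point. A secondary point is the coordinate normalization: placing $\mathcal D$ in the standard form \eqref{eq:conicD} with foci on the $x$-axis is always possible by a rigid motion, which preserves all distances. Finally, the case $\beta=0$ has to be excluded as degenerate, which is consistent with Corollary~\ref{cor:nondenconic}.
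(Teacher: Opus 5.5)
Your proof is correct and takes the same approach as the paper: the conic type is read off from the sign of the left-hand side of \eqref{eq:genchappleeuler}, which must equal the sign of $\varepsilon\beta^2$. You are more careful than the paper in one respect: you note that the nonzero right-hand side rules out a focus on the circumcircle, which makes the two focal configurations exhaustive and justifies the converse directions that the paper dismisses as ``treated analogously.''
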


\begin{proof}
Without loss of generality, assume that $F_-$ lies in the interior of the circumcircle of $\triangle ABC$, while $F_+$ lies in its exterior. This is equivalent to the conditions $d_- < R$ and $d_+ > R$. It then follows from \eqref{eq:genchappleeuler} that
\[
\left(R^2 - d_-^2\right)\left(R^2 - d_+^2\right) < 0,
\]
and hence $\varepsilon \beta^2 < 0$. Since $\beta^2 > 0$, we must have $\varepsilon = -1$, and therefore $\mathcal{D}$ is a hyperbola.

The remaining cases, in which both foci lie either inside or outside the circumcircle, are treated analogously and yield $\varepsilon = 1$, corresponding to an ellipse. See Figure~\ref{fig:confocicrit}.
\end{proof}

\begin{figure}
    \centering
\begin{subfigure}{0.48\textwidth}
\centering
\begin{tikzpicture}[scale=1.5]
\clip(-3.1,-2) rectangle (2.5,2.5);
\draw [line width=1.pt,gray] (-2.,1.) circle (1.cm);
\draw [rotate around={0.:(0.,0.)},line width=1.pt,red] (0.,0.) ellipse (1.8027756377319946cm and 1.5cm);
\draw [line width=1.pt,color=blue] (-2.6245314745902935,1.780999639722115)-- (-1.2297279830366428,1.6377154693773714);
\draw [line width=1.pt,color=blue] (-1.2297279830366428,1.6377154693773714)-- (-1.9044168857563224,0.004578547412464044);
\draw [line width=1.pt,color=blue] (-1.9044168857563224,0.004578547412464044)-- (-2.6245314745902935,1.780999639722115);
\draw [line width=1.pt,gray] (-2,1)-- (-1,0.);
\draw [line width=1.pt,gray] (-2,1)-- (1,0.);
\begin{scriptsize}
\draw[color=gray] (-3,1.5) node {$\mathcal{C}$};
\draw[color=red] (1,1.5) node {$\mathcal{D}$};
\draw [fill=uuuuuu] (-2.,1.) circle (1.0pt);
\draw[color=black] (-1.947484241301289,1.13909431179668) node {$O$};
\draw [fill=blue] (-2.6245314745902935,1.780999639722115) circle (1.0pt);
\draw[color=blue] (-2.7,1.9194224769077086) node {$A$};
\draw [fill=blue] (-1.2297279830366428,1.6377154693773714) circle (1.0pt);
\draw[color=blue] (-1.1,1.761856212798751) node {$C$};
\draw [fill=blue] (-1.9044168857563224,0.004578547412464044) circle (1.0pt);
\draw[color=blue] (-1.95,-0.15) node {$B$};
\draw [fill=uuuuuu] (1.,0.) circle (1.0pt);
\draw[color=black] (1.0725358207870712,0.14492621682349444) node {$F_+$};
\draw [fill=uuuuuu] (-1.,0.) circle (1.0pt);
\draw[color=black] (-0.9308066800268224,0.14492621682349444) node {$F_-$};
\end{scriptsize}
\end{tikzpicture}
    \caption{$F_\pm$ lie outside the circumcircle of $\triangle ABC$.}
    \label{fig:confocicrit(A)}
\end{subfigure}
\begin{subfigure}{0.48\textwidth}
\centering
\begin{tikzpicture}[scale=1.8]
\clip(-1.4,-0.8) rectangle (0.9,1.6);
\draw [line width=1.pt,gray] (-0.2,0.4) circle (1.cm);
\draw [rotate around={0.:(0.,0.)},line width=1.pt,red] (0.,0.) ellipse (0.5310367218940701cm and 0.2863564212655271cm);
\draw [line width=1.pt,color=blue] (-0.22757571634783036,1.3996197176266103)-- (0.636942977497998,-0.14729009895738604);
\draw [line width=1.pt,color=blue] (0.636942977497998,-0.14729009895738604)-- (-0.6436416486956317,-0.49620426663937595);
\draw [line width=1.pt,color=blue] (-0.6436416486956317,-0.49620426663937595)-- (-0.22757571634783036,1.3996197176266103);
\draw [line width=1.pt,gray] (-0.2,0.4)-- (-0.447213595499958,0.);
\draw [line width=1.pt,gray] (-0.2,0.4)-- (0.447213595499958,0.);
\begin{scriptsize}
\draw[color=gray] (-1.15,1) node {$\mathcal{C}$};
\draw[color=red] (0.1,0.4) node {$\mathcal{D}$};
\draw [fill=uuuuuu] (-0.2,0.4) circle (0.8pt);
\draw[color=black] (-0.17,0.52) node {$O$};
\draw [fill=blue] (-0.22757571634783036,1.3996197176266103) circle (0.9pt);
\draw[color=blue] (-0.23,1.55) node {$A$};
\draw [fill=blue] (0.636942977497998,-0.14729009895738604) circle (0.9pt);
\draw[color=blue] (0.7,-0.25) node {$C$};
\draw [fill=blue] (-0.6436416486956317,-0.49620426663937595) circle (0.9pt);
\draw[color=blue] (-0.68,-0.62) node {$B$};
\draw [fill=uuuuuu] (0.447213595499958,0.) circle (0.8pt);
\draw[color=black] (0.3,-0.04297599837455058) node {$F_+$};
\draw [fill=uuuuuu] (-0.447213595499958,0.) circle (0.8pt);
\draw[color=black] (-0.3,-0.033286061792101855) node {$F_-$};
\end{scriptsize}
\end{tikzpicture}
    \caption{$F_\pm$ lie inside the circumcircle of $\triangle ABC$.}
    \label{fig:confocicrit(B)}
\end{subfigure}
\begin{subfigure}{0.48\textwidth}
\centering
\begin{tikzpicture}[scale=2]
\clip(-2.1,-1) rectangle (2.1,2);
\draw [line width=1.pt,gray] (-0.5,0.5) circle (1.cm);
\draw [samples=100,domain=-0.99:0.99,rotate around={0.:(0.,0.)},xshift=0.cm,yshift=0.cm,line width=1.pt,red] plot ({0.9013878188659973*(1+(\x)^2)/(1-(\x)^2)},{0.4330127018922193*2*(\x)/(1-(\x)^2)});
\draw [samples=100,domain=-0.99:0.99,rotate around={0.:(0.,0.)},xshift=0.cm,yshift=0.cm,line width=1.pt,red] plot ({0.9013878188659973*(-1-(\x)^2)/(1-(\x)^2)},{0.4330127018922193*(-2)*(\x)/(1-(\x)^2)});
\draw [line width=1.pt,color=blue] (-0.2038861389751374,1.4551526481714576)-- (-1.0378296847837545,-0.3430535155999335);
\draw [line width=1.pt,color=blue] (-1.0378296847837545,-0.3430535155999335)-- (0.49978874906871124,0.4794462354878397);
\draw [line width=1.pt,color=blue] (0.49978874906871124,0.4794462354878397)-- (-0.2038861389751374,1.4551526481714576);
\draw [line width=1.pt,gray] (-0.5,0.5)-- (-1,0.);
\draw [line width=1.pt,gray] (-0.5,0.5)-- (1,0.);
\begin{scriptsize}
\draw[color=gray] (-1,1.5) node {$\mathcal{C}$};
\draw[color=red] (1.4,0.7) node {$\mathcal{D}$};
\draw [fill=uuuuuu] (-0.5,0.5) circle (0.8pt);
\draw[color=black] (-0.44685315454930874,0.6363828977347673) node {$O$};
\draw [fill=blue] (-0.2038861389751374,1.4551526481714576) circle (0.8pt);
\draw[color=blue] (-0.18,1.5967867932560333) node {$A$};
\draw [fill=blue] (-1.0378296847837545,-0.3430535155999335) circle (0.8pt);
\draw[color=blue] (-1.1,-0.47) node {$B$};
\draw [fill=blue] (0.49978874906871124,0.4794462354878397) circle (0.8pt);
\draw[color=blue] (0.62,0.5) node {$C$};
\draw [fill=uuuuuu] (1.,0.) circle (0.8pt);
\draw[color=black] (1.15,0.14) node {$F_+$};
\draw [fill=uuuuuu] (-1.,0.) circle (0.8pt);
\draw[color=black] (-1.15,0.14) node {$F_-$};
\end{scriptsize}
\end{tikzpicture}
    \caption{$F_-$ lies inside,  $F_+$ lies outside the circumcircle of $\triangle ABC$.}
    \label{fig:confocicrit(C)}
    \end{subfigure}
        \caption{Illustration of Corollary \ref{cor:nondenconic} and Theorem \ref{thm:confocicrit}.}
    \label{fig:confocicrit}
\end{figure}

\begin{corollary}\label{cor:circentell}
If a triangle is circumscribed about a central conic whose center coincides with the circumcenter of the triangle, then the conic is an ellipse.
\end{corollary}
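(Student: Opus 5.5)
The plan is to read the corollary off Theorem~\ref{thm:genchappleeuler}, or equivalently off Theorem~\ref{thm:confocicrit}. When the center of $\mathcal D$ coincides with the circumcenter $O$, the two foci are symmetric about $O$. Hence $d_+=d_-=:d$, and in the normalization \eqref{eq:conicD} we have $x_O=y_O=0$ and $d^2=|a-b|$ (this is $a-b$ in both the elliptic and hyperbolic cases). The generalized Chapple--Euler relation \eqref{eq:genchappleeuler} then reads
\[
\left(R^2-d^2\right)^2 = 4\varepsilon\beta^2R^2 .
\]

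The left-hand side is a square, so it is nonnegative. Since $R>0$ and $\beta^2>0$, this forces $\varepsilon\ge 0$, that is, $\varepsilon=1$, so $\mathcal D$ is an ellipse. Geometrically, equidistant foci lie both inside or both outside the circumcircle, and Theorem~\ref{thm:confocicrit}(a) gives the same conclusion.

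The one point needing care is the degenerate case where the left side vanishes, i.e.\ $R=d$. I would handle it directly from \eqref{eq:3ponsscaled}. With $x_O=y_O=0$ that equation becomes $(R^2)^2-2(a+b)R^2+(a-b)^2=0$, which rearranges to
\[
(R^2-a-b)^2=4ab .
\]
This requires $ab\ge0$. The hyperbolic case $b<0<a$ would then require $ab=0$, which is impossible for a nondegenerate conic. Equivalently, $R=d$ places both foci on $\mathcal C$, which Corollary~\ref{cor:nondenconic}(a) excludes.

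So the whole argument is a sign check on a square, and I expect no real obstacle beyond ruling out this boundary case.
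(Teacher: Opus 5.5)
Your argument is correct and is essentially the paper's: the paper notes that foci symmetric about the circumcenter lie either both inside or both outside the circumcircle and then invokes Theorem~\ref{thm:confocicrit}, which is itself the same sign check on \eqref{eq:genchappleeuler} that you carry out directly with $d_+=d_-$. Your separate treatment of $R=d$ is harmless but unnecessary, since for $\varepsilon=-1$ the right-hand side $-4\beta^2R^2$ is strictly negative and can never equal the square on the left; this observation also covers the foci-on-the-circle case that the paper's appeal to Theorem~\ref{thm:confocicrit} leaves implicit.
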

\begin{proof}
Since the center of the conic is the midpoint of its foci and coincides with the circumcenter of the triangle, the foci are symmetric with respect to the center of the circumcircle. Consequently, either both foci lie inside the circumcircle or neither does. By Theorem~\ref{thm:confocicrit}, this implies that the conic is an ellipse.
\end{proof}

\section{Some Properties of 3-Poncelet Pairs}\label{sec:3}

\begin{proposition}\label{prop:Rhat}
Let $\mathcal D$ be a central conic, and let $\mathcal C$ and $\hat{\mathcal C}$ be concentric circles with common center $O$ and radii $R>0$ and $\hat R>0$, respectively. Then $(\mathcal C,\mathcal D)$ is a $3$-Poncelet pair if and only if $(\hat{\mathcal C},\mathcal D)$ is a $3$-Poncelet pair. This occurs precisely when
\[
R\hat R=d_+d_-,
\]
where $d_+$ and $d_-$ denote the distances from $O$ to the foci of $\mathcal D$.
\end{proposition}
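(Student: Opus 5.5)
The plan is to treat the generalized Chapple--Euler relation \eqref{eq:genchappleeuler} of Theorem~\ref{thm:genchappleeuler} as a polynomial equation in the circumradius, with $\mathcal D$ and the centre $O$ held fixed. In that setting $d_+$, $d_-$, $\beta$ and $\varepsilon$ do not depend on the radius. Expanding \eqref{eq:genchappleeuler} and writing $t=R^2$ gives
\[
t^2-\bigl(d_+^2+d_-^2+4\varepsilon\beta^2\bigr)t+d_+^2d_-^2=0 .
\]
By Theorem~\ref{thm:genchappleeuler}, equivalently Lemma~\ref{lemm:3pons} in its scaled form \eqref{eq:3ponsscaled}, a circle centred at $O$ with radius $\rho>0$ forms a $3$-Poncelet pair with $\mathcal D$ exactly when $t=\rho^2$ is a root of this quadratic. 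So the statement reduces to elementary facts about the two roots $t_1,t_2$.

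Suppose $(\mathcal C,\mathcal D)$ is a $3$-Poncelet pair. Then $t_1=R^2>0$ is a real root, so the coefficients are real and the second root $t_2$ is also real. By Vieta, $t_1t_2=d_+^2d_-^2$.

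Assume first that $d_+d_-\neq 0$, i.e.\ $O$ is not a focus. Then $t_2=d_+^2d_-^2/R^2>0$, and setting $\hat R=\sqrt{t_2}$ gives a circle $\hat{\mathcal C}$ that again satisfies \eqref{eq:genchappleeuler}, with $R\hat R=d_+d_-$. Because the roles of $R$ and $\hat R$ are symmetric, the same argument run from $\hat{\mathcal C}$ gives the converse. The case $t_1=t_2$, a double root with $R=\hat R=\sqrt{d_+d_-}$, is allowed and simply means the two circles coincide. Conversely, any two positive radii $R\neq\hat R$ that both make $(\cdot,\mathcal D)$ Poncelet must be the two distinct roots, so they satisfy $R\hat R=d_+d_-$. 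This gives the ``precisely when'' clause, read as follows: the partner of a Poncelet circle centred at $O$ is the concentric circle of radius $d_+d_-/R$.

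I expect the main obstacle to be the degenerate configurations rather than the algebra. When $O$ coincides with a focus, $d_+d_-=0$, so $t_2=0$ and no partner circle of positive radius exists. The plan is to treat this case explicitly, either excluding it as implicit in the hypothesis $\hat R>0$ or noting that there the equivalence holds only with $\hat R=R$. I also need the validity of \eqref{eq:genchappleeuler} for both radii. Here I will rely on Remark~\ref{rem:goldbergzwas}: Lemma~\ref{lemm:3pons} imposes no positional assumptions on the circle, so every positive root yields a genuine family of Poncelet triangles. The only check left is the reality condition of Remark~\ref{rem:detD}. For this I would argue that a circle with $f_2=0$ cannot lie entirely inside $\mathcal D$, since otherwise no real tangents exist and the identity of Lemma~\ref{lemm:sBBsCC} would be vacuous. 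I would verify this directly by examining $f_2$ for circles contained in the ellipse.
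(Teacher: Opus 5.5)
Your core argument is the paper's one-line proof. You rewrite \eqref{eq:genchappleeuler} as $t^2-(d_+^2+d_-^2+4\varepsilon\beta^2)t+d_+^2d_-^2=0$ with $t=R^2$ and read off the product of the roots. You also note that the case $d_+d_-=0$ ($O$ at a focus) must be excluded, because the partner root is then $t_2=0$. That observation is correct, and the paper does not address it.

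\textbf{The step that fails is your last one.} You propose to show that a circle with $f_2=0$ can never lie in the region of $\mathcal D$ from which no real tangents issue. You also read Remark~\ref{rem:goldbergzwas} as saying that every positive root gives a genuine family of real Poncelet triangles. Neither claim is true, so no examination of $f_2$ will produce this.

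A counterexample: take $\mathcal D: x^2/9+y^2/4=1$ with $O$ its center. Then $d_\pm=\sqrt5$ and $\beta=2$, and your quadratic becomes $t^2-26t+25=0$, giving $R=5$ and $\hat R=1$.
\begin{itemize}
\item The circle of radius $5$ is a genuine partner. From $A=(0,5)$ the tangents meet the circle again at $(\pm\sqrt{21},-2)$, and the line $y=-2$ touches $\mathcal D$.
\item The circle of radius $1$ is not. On it, $x^2/9+y^2/4\le 1/4<1$, so it lies strictly inside $\mathcal D$. Every line through two of its points is then a secant of $\mathcal D$, and there is no real Poncelet triangle.
\end{itemize}
More generally, in the concentric case $\hat R=\alpha-\beta<\beta$ whenever $\alpha<2\beta$, so the same failure occurs.

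Remark~\ref{rem:goldbergzwas} only concerns the position of the foci and the type of the conic. The ``if'' direction of Lemma~\ref{lemm:3pons}, and hence of Theorem~\ref{thm:genchappleeuler}, holds only under the standing assumption of Remark~\ref{rem:detD}: the circle must contain a point from which real tangents to $\mathcal D$ exist. Taken alone, \eqref{eq:genchappleeuler} is only the algebraic (complex) Poncelet condition.

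So you cannot derive the reality condition; you have to impose it. The proposition holds as a statement about the roots of the quadratic, or over $\mathbb C$. For real triangles you must add two hypotheses: the condition of Remark~\ref{rem:detD} for $\hat{\mathcal C}$, and that $O$ is not a focus. With those caveats stated explicitly, your Vieta argument is complete, and it is more careful than the paper's.
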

\begin{proof}
It is easy to see that \eqref{eq:genchappleeuler} is biquadratic in $R$ and the product of its solutions is
\[
R^2\hat{R}^2=d_+^2d_-^2.
\]
This proves the claim.
\end{proof}

\begin{remark}\label{rem:oohat}
Proposition \ref{prop:Rhat} immediately implies that both foci of $\mathcal{D}$ lie inside $\mathcal{C}(O,R)$ if and only if they lie outside $\hat{\mathcal{C}}$.

Let $\hat{O}$ denote the reflection of $O$ in the center of $\mathcal{D}$. Then the configuration $(\mathcal{C}(\hat{O},R),\mathcal{D})$ is the reflection of $(\mathcal{C}(O,R),\mathcal{D})$ in this center. Consequently, $(\mathcal{C}(\hat{O},R),\mathcal{D})$ is a $3$-Poncelet pair if and only if $(\mathcal{C}(O,R),\mathcal{D})$ is.
\end{remark}

\begin{corollary}\label{cor:sumdiffsemi}
Let $\mathcal D$ be an ellipse with semi-axes $\alpha$ and $\beta$, where $\alpha>\beta>0$. Let $\mathcal C_+$ and $\mathcal C_-$ be the circles concentric with $\mathcal D$ and having radii $\alpha+\beta$ and $\alpha-\beta$, respectively. Then both $(\mathcal C_+,\mathcal D)$ and $(\mathcal C_-,\mathcal D)$ are $3$-Poncelet pairs.
\end{corollary}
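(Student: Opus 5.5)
The plan is to apply Theorem~\ref{thm:genchappleeuler} directly in the concentric configuration. Place $\mathcal D$ in the standard form \eqref{eq:conicD} with $a=\alpha^2$ and $b=\beta^2$. Its foci are then $F_\pm=(\pm\gamma,0)$, where $\gamma^2=\alpha^2-\beta^2$. Since the circles $\mathcal C_\pm$ are concentric with $\mathcal D$, their common center is $O=(0,0)$, so $d_+=d_-=\gamma$. Because $\mathcal D$ is an ellipse, $\varepsilon=1$, and \eqref{eq:genchappleeuler} reduces to
\[
\left(R^2-\gamma^2\right)^2=4\beta^2R^2,
\qquad\text{that is,}\qquad
R^2-\alpha^2+\beta^2=\pm 2\beta R .
\]

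Next, I would verify each radius. For $R=\alpha+\beta$ we get $R^2-\alpha^2+\beta^2=2\alpha\beta+2\beta^2=2\beta(\alpha+\beta)=2\beta R$, which is the $+$ sign. For $R=\alpha-\beta$ we get $R^2-\alpha^2+\beta^2=-2\alpha\beta+2\beta^2=-2\beta(\alpha-\beta)=-2\beta R$, which is the $-$ sign. Squaring either identity gives \eqref{eq:genchappleeuler}. Since $\alpha>\beta>0$, both radii are positive, so Theorem~\ref{thm:genchappleeuler} (equivalently Lemma~\ref{lemm:3pons} in the scaled form \eqref{eq:3ponsscaled} with $x_O=y_O=0$) shows that both pairs are $3$-Poncelet pairs.

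As a consistency check, $(\alpha+\beta)(\alpha-\beta)=\gamma^2=d_+d_-$, which agrees with Proposition~\ref{prop:Rhat}. This gives a second route: one could verify only $\mathcal C_+$ and obtain $\mathcal C_-$ as the partner circle of radius $\hat R$.

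The only step I expect to need care is the real-tangent hypothesis of Remark~\ref{rem:detD}, namely that some point of each circle lies outside $\mathcal D$. For $\mathcal C_+$ this holds because $\alpha+\beta>\alpha$. For $\mathcal C_-$ the point $(0,\alpha-\beta)$ is outside $\mathcal D$ exactly when $\alpha-\beta>\beta$, which fails when $\alpha\le 2\beta$. To cover that case I would use $(\alpha-\beta,0)$ instead: it lies strictly inside $\mathcal D$ because $\alpha-\beta<\alpha$, so at the level of Remark~\ref{rem:detD} the hypothesis is unresolved. I would therefore rely on the fact that Theorem~\ref{thm:genchappleeuler} and Remark~\ref{rem:goldbergzwas} assert the equivalence without positional hypotheses. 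I would also present $\mathcal C_-$ as the complex or degenerate partner in the sense of Proposition~\ref{prop:Rhat}, since with both foci outside $\mathcal C_-$ the triangle circumscribes $\mathcal D$ in the excircle-like, Landen-type sense.
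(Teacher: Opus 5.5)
\textbf{The gap: the $\mathcal C_-$ half is false when $\alpha\le 2\beta$.} Your computation is the paper's argument in slightly different form. The paper takes $R=\alpha+\beta$ as a root of the biquadratic \eqref{eq:genchappleeuler} with $d_+=d_-=\sqrt{a-b}$ and gets $\hat R=\alpha-\beta$ from Proposition~\ref{prop:Rhat}; you check both roots directly. For $\mathcal C_+$ this is complete, since $\alpha+\beta>\alpha$ puts all of $\mathcal C_+$ outside $\mathcal D$.

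The problem is in your last paragraph, and it cannot be closed, because the case you call ``unresolved'' is settled negatively. Every point of $\mathcal D$ is at distance at least $\beta$ from its center. So if $\alpha<2\beta$, the circle $\mathcal C_-$ (radius $\alpha-\beta<\beta$) lies in the open interior of $\mathcal D$. Every tangent line of $\mathcal D$ supports the convex region it bounds, so no point of $\mathcal C_-$ lies on a tangent of $\mathcal D$. Hence no real triangle is inscribed in $\mathcal C_-$ and circumscribed about $\mathcal D$.

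For instance, $\alpha=3$, $\beta=2$ gives $R=1$, $\gamma^2=5$ and $(1-5)^2=16=4\cdot 4\cdot 1$, yet the unit circle sits inside $x^2/9+y^2/4=1$. For $\alpha=2\beta$, $\mathcal C_-$ is internally tangent to $\mathcal D$ at $(0,\pm\beta)$, and only degenerate configurations occur.

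Your three escape routes do not work:
\begin{itemize}
\item Remark~\ref{rem:goldbergzwas} concerns where the foci lie. The sufficiency half of Lemma~\ref{lemm:3pons} must start from a point $A\in\mathcal C$ with real tangents, which is the standing assumption of Remark~\ref{rem:detD}, and that is exactly what fails here.
\item Calling $\mathcal C_-$ a ``complex partner'' concedes that the algebraic condition only produces a complex Poncelet triangle. That is not what Definition~\ref{def:3ponspair} asks for.
\item The Landen-type description does not separate the cases. Both foci lie outside $\mathcal C_-$ for every $\alpha>\beta$, because $\gamma^2=(\alpha-\beta)(\alpha+\beta)>(\alpha-\beta)^2$.
\end{itemize}

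\textbf{What your argument does prove.} $(\mathcal C_+,\mathcal D)$ is always a $3$-Poncelet pair. $(\mathcal C_-,\mathcal D)$ is one exactly when $\alpha>2\beta$: then $(0,\alpha-\beta)$ lies outside $\mathcal D$, $\mathcal C_-$ crosses $\mathcal D$ transversally, and the construction of Lemma~\ref{lemm:3pons} runs with real tangents. For example, $\alpha=3$, $\beta=1$ gives the triangle with vertices $(0,2)$ and $(\pm\sqrt{3},1)$ on $x^2+y^2=4$. Its side lines $y=1$ and $y=\pm x/\sqrt{3}+2$ touch $x^2/9+y^2=1$, two of them at points on the extensions of the sides, in the Landen manner.

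The paper's proof is the same computation routed through Proposition~\ref{prop:Rhat} and also skips this reality check. So the missing hypothesis $\alpha>2\beta$ for $\mathcal C_-$ is a defect of the statement itself, not only of your write-up. You should state and prove the corrected version rather than use Theorem~\ref{thm:genchappleeuler} as if it carried no reality assumption.
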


\begin{proof}
Since $\mathcal D$ is an ellipse, we have $a>b>0$. Let
\[
\alpha=\sqrt a,\qquad \beta=\sqrt b,
\]
so that $\alpha$ and $\beta$ are the semi-axes of $\mathcal D$. The foci of $\mathcal D$ are located at distance
\[
c=\sqrt{a-b}
\]
from the common center of the ellipse and the circles. Hence
\[
d_+=d_-=c.
\]

Applying Corollary~\ref{prop:Rhat} to the concentric pair $(\mathcal C_+,\mathcal D)$ gives
\[
R=\sqrt a+\sqrt b=\alpha+\beta.
\]
Therefore $(\mathcal C_+,\mathcal D)$ is a $3$-Poncelet pair.

Furthermore, Corollary~\ref{prop:Rhat} yields
\[
\hat R=\frac{d_+d_-}{R}
      =\frac{a-b}{\sqrt a+\sqrt b}
      =\sqrt a-\sqrt b
      =\alpha-\beta.
\]
Thus the concentric circle of radius $\alpha-\beta$ also forms a $3$-Poncelet pair with $\mathcal D$. Hence $(\mathcal C_-,\mathcal D)$ is a $3$-Poncelet pair as well.

This completes the proof.
\end{proof}

\begin{theorem}\label{thm:orthcirc}
Let $\mathcal{P}$ be a family of triangles inscribed in a circle and circumscribed about a central conic $\mathcal{D}$. Then the orthocenters of the triangles in $\mathcal{P}$ lie on a circle whose center $\hat O$ is the reflection of the circumcenter of the triangle with respect to the center of $\mathcal{D}$ and radius $\hat R$ is $d_+ d_-/R$, where $d_+$ and $d_-$ are the distances from the circumcenter the triangle to the foci of $\mathcal{D}$ and $R$ is the circumradius of the triangle (Figures~\ref{fig:orthcircell}--\ref{fig:orthcirchyp}).
\end{theorem}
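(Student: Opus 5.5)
Place the circumcenter $O$ at the origin, so that every vertex $A,B,C$ of a triangle in $\mathcal P$ satisfies $|A|=|B|=|C|=R$. Sylvester's relation then gives the orthocenter as $H=A+B+C$. Let $M$ denote the center of $\mathcal D$. The reflection of $O$ in $M$ is $\hat O=2M$. The statement therefore reduces to the identity
\[
|A+B+C-2M|^2=\frac{d_+^2d_-^2}{R^2}
\]
for every Poncelet triangle.

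\textbf{Step 1: reduce to a foci condition.} If $H$ is the orthocenter, the isogonal conjugate of $H$ is $O$. Reflecting $O$ in $M$ gives the point $\hat O$ whose isogonal conjugate would be... This does not help directly. The more useful route is to expand the squared norm:
\[
|H-2M|^2=|H|^2-4\langle H,M\rangle+4|M|^2,
\]
with $|H|^2=9R^2-(a'^2+b'^2+c'^2)$, where $a',b',c'$ are the side lengths.

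This route requires knowing both $\sum a'^2$ and $\langle H,M\rangle$ along the family, which is awkward. Instead I would use the focal characterization of inconics. The foci $F_\pm$ of an inconic are isogonal conjugates, so $F_+F_-$ determines $\mathcal D$ via the classical identity for isogonal conjugates $P,Q$ with circumcenter at the origin and complex coordinates on the circle $|z|=R$. Writing $s_1=A+B+C$, $s_2=AB+BC+CA$, $s_3=ABC$ in complex notation, the condition that $F_\pm$ are isogonal conjugates is
\[
F_++F_-+s_2\,\overline{F_+F_-}\cdot\frac{1}{R^2}\cdot(\ldots)=s_1 .
\]
Concretely, the standard relation (Siebeck–Marden type) is
\[
F_++F_- + \frac{s_3}{R^2}\,\overline{F_+}\,\overline{F_-}=s_1 .
\]
Writing $2M=F_++F_-$, this gives
\[
H-\hat O=s_1-2M=\frac{s_3}{R^2}\,\overline{F_+F_-}.
\]
Taking moduli and using $|s_3|=R^3$ yields
\[
|H-\hat O|=\frac{R^3\,|F_+|\,|F_-|}{R^2}=R\,d_+d_-\cdot\frac{1}{R^2}\cdot R ,
\]
with the normalization to be fixed. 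Rescaling to the unit circle makes this $|H-\hat O|=d_+d_-$, and undoing the scaling gives $d_+d_-/R$. The constancy of $\hat R$ is then immediate, since $d_\pm$ are fixed by the family.

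\textbf{Step 2: justify the isogonal-conjugate relation.} This is the main obstacle, and I would prove it directly for the unit circle. For a triangle on $|z|=1$, the isogonal conjugate of $p$ satisfies
\[
p+q+s_3\,\bar p\,\bar q=s_1 .
\]
To verify this, I would check that the inconic with foci $p,q$ is tangent to line $BC$. This holds if and only if the reflection of $p$ in $BC$, namely $B+C-BC\bar p$, lies at distance $2\alpha$ from $q$ for the same $\alpha$ on all three sides. Equivalently, the product of the signed distances from $p$ and $q$ to each side is the same constant $\pm\beta^2$.

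I would instead avoid the full proof by citing the classical fact that the foci of an inconic are isogonal conjugates. The algebraic identity then follows from the isogonal map formula on the unit circle, which is a short computation with reflections of the line $Ap$ in the angle bisector. This case also covers hyperbolas, since the isogonality of the foci holds for any inconic.

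A fallback is to compute $|A+B+C-2M|^2$ by brute force using Lemma~\ref{lemm:tangent}(b) and reduce modulo \eqref{eq:3ponsscaled}. Finally, in either approach, I would note that the points $H$ sweep the full circle, or at least lie on it, which is what the statement asserts.
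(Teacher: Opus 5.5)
Your argument is correct in substance, but it takes a different route from the paper.

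\textbf{The paper's route.} It places the center of $\mathcal D$ at the origin and writes $H=3G-2O$. It computes $B$ and $C$ from $A$ via the Joachimsthal slopes of Lemma~\ref{lemm:tangent}, then checks $|H-\hat O|^2=d_+^2d_-^2/R^2$ by a computer-algebra simplification.

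\textbf{Your route.} You put $O$ at the origin and use $H=A+B+C$. You encode the isogonality of the foci $p,q$ on the unit circle as $p+q+s_3\,\bar p\,\bar q=s_1$. Then $H-\hat O=s_3\,\bar p\,\bar q$, so $|H-\hat O|=|p|\,|q|=d_+d_-$.

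The formula is right, and your Step~2 can be made self-contained in two lines. Write the vertices as $z_1,z_2,z_3$. Isogonality at $z_1$ says $(p-z_1)(q-z_1)/\bigl((z_2-z_1)(z_3-z_1)\bigr)\in\mathbb R$. Using $\bar z_j=1/z_j$, this becomes $pq-z_1(p+q)+z_1^2=z_2z_3(z_1\bar p-1)(z_1\bar q-1)$. Subtracting the analogous equation at $z_2$ and dividing by $z_1-z_2$ gives $p+q+z_1z_2z_3\,\bar p\,\bar q=z_1+z_2+z_3$.

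\textbf{What your route buys.} The identity holds for a \emph{single} triangle and any inconic, ellipse or hyperbola, since the foci of any central conic tangent to the three sidelines are isogonal conjugates. So neither the Poncelet condition nor any computation of $B,C$ is needed. Invariance along $\mathcal P$ is automatic, because $\hat O$, $R$ and $d_\pm$ depend only on the pair. The identity also pins down where $H$ sits on the circle, not merely that it lies on it. In fact, for an ellipse with foci inside the unit circle, your identity is exactly \eqref{eq:s1} of the paper's Blaschke section, with $s_3=\lambda$ and $\{p,q\}=\{a_1,a_2\}$; the paper uses it only for areas. The paper's computation stays inside its Joachimsthal framework but gives no insight into why the radius is $d_+d_-/R$.

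\textbf{One slip to repair.} At radius $R$ the relation is $F_++F_-+\frac{s_3}{R^4}\,\overline{F_+}\,\overline{F_-}=s_1$, not $s_3/R^2$. A homogeneity check confirms this: both sides must have degree one in lengths. With $|s_3|=R^3$ this gives $|H-\hat O|=d_+d_-/R$ directly. You can then delete the ``normalization to be fixed'' step, the placeholder formula with $(\ldots)$, and the abandoned attempts at the start of Step~1.
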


\begin{proof} Without loss of generality, we take $F=(0,0)$ is the center of $\mathcal D$. Let $G=(x_G,y_G)$ and $O=(x_O,y_O)$ be the centroid and circumcenter of $\triangle ABC\in \mathcal{P}$, respectively. 

By the property of Euler line, the orthocenter $H=(x_H,y_H)$ satisfies
\[
x_H = 3x_G - 2x_O, \qquad y_H = 3y_G - 2y_O
\]
where
\[
x_G=\frac{1}{3}(x_A+x_B+x_C),\qquad y_G=\frac{1}{3}(y_A+y_B+y_C).
\]
Using Lemma \ref{lemm:tangent}, we calculate the coordinates of $B$ and $C$. Then a direct calculation gives
\[
(x_H + x_O)^2 + (y_H + y_O)^2 = \frac{d_+^2 d_-^2}{R^2}.
\]
Therefore, the orthocenter $H$ lies on the circle 
\[
\Gamma: (x + x_O)^2 + (y + y_O)^2 = \hat{R}^2. 
\]
with center $\hat{O}=2F-O$ and radius $\hat{R}$.
This completes the proof.
\end{proof}

\begin{corollary}\label{cor:ohcoin}
Let $\mathcal P$ be a family of triangles inscribed in a circle and circumscribed about a central conic $\mathcal D$. Then the orthocenter of a triangle in $\mathcal P$ remains invariant throughout the family if and only if the circumcenter of the triangle coincides with a focus of $\mathcal D$. In that case, the common orthocenter is precisely the other focus of $\mathcal D$.

In that case, the Euler line of a triangle in $\mathcal P$ remains invariant throughout the family.
\end{corollary}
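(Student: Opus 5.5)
By Theorem~\ref{thm:orthcirc}, the orthocenters of the triangles in $\mathcal P$ lie on the circle $\Gamma$ with center $\hat O = 2F - O$ and radius $\hat R = d_+d_-/R$. Here $F$ is the center of $\mathcal D$ and $O$ is the common circumcenter. The plan is to show that the orthocenter is invariant exactly when this circle degenerates to a point, i.e.\ when $\hat R = 0$. That happens precisely when $d_+=0$ or $d_-=0$, that is, when $O$ coincides with a focus of $\mathcal D$.

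For sufficiency, suppose $O=F_-$, say. Then $d_-=0$, so $\hat R=0$ and every orthocenter $H$ equals $\hat O = 2F - F_- = F_+$. This uses that the center $F$ is the midpoint of $F_+F_-$. Hence the orthocenter is constant and equals the other focus.

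For necessity, suppose the orthocenter is invariant but $d_+d_->0$. Then the orthocenters lie on a genuine circle of positive radius. I would show that $H$ actually moves along $\Gamma$, so it cannot be constant.

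\textbf{Main obstacle.} The step I expect to be the main obstacle is this nonconstancy of $H$ when $\hat R>0$. I would handle it with the Euler relation $H = A+B+C-2O$, taking vectors from $O$, so that $H$ is constant iff $A+B+C$ is constant along the family. Parametrize the family by the vertex $A$, which ranges over an arc of $\mathcal C$. By Lemma~\ref{lemm:tangent}, $B$ and $C$ are algebraic functions of $A$, so $A+B+C$ is an algebraic function of the parameter.

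Suppose $A+B+C$ were constant. In the Poncelet family the same triangle is obtained starting from each of its vertices, so that alone gives no contradiction. Instead, let the parameter approach a point $A_0\in\mathcal C$ lying on a common tangent of $\mathcal C$ and $\mathcal D$; such points are the zeros of $f_1$ in Lemma~\ref{lemm:tangent}(c). At that limit the triangle degenerates, with two vertices merging, and one can compare its value of $A+B+C$ with the value at a generic position.

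\textbf{Fallback.} A cleaner route avoids the degenerate limit altogether. The explicit formula for $x_H$ obtained in the proof of Theorem~\ref{thm:orthcirc} is a nonconstant rational expression in $(x_A,y_A)$ whenever $d_+d_-\neq 0$. Computing $\partial x_H/\partial x_A$ at a single convenient point, for instance $A$ on the line through $O$ and $F$, shows it is nonzero. This computation is the part most likely to need care.

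Finally, in the invariant case both $O=F_-$ and $H=F_+$ are fixed points. The Euler line, which is the line through $O$ and $H$, is then the fixed line $F_-F_+$, i.e.\ the focal axis. This holds whenever $F_+\neq F_-$, which is the case for a non-circular conic. The degenerate case of a circle with $O=F$ is the equilateral case, where $H=O$ and the claim about the Euler line is vacuous. That proves the last sentence.
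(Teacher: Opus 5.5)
Your outline follows the paper's own proof. Theorem~\ref{thm:orthcirc} confines every orthocenter to the circle $\Gamma$ of radius $\hat R=d_+d_-/R$ about $\hat O=2F-O$. Then $\hat R=0$ exactly when $O$ is a focus, in which case $H=\hat O$ is the other focus and the Euler line is the focal axis.

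You are right that this leaves a step open. The paper's proof takes it for granted when it says the orthocenter is invariant ``if and only if the orthocenter locus degenerates to a single point''. Theorem~\ref{thm:orthcirc} only gives $H\in\Gamma$, so necessity requires showing that $H$ actually moves when $\hat R>0$. Your proposal does not establish this:
\begin{itemize}
\item The degenerate-limit route is unavailable in the principal case. If $\mathcal D$ is an ellipse with both foci inside $\mathcal C$, it lies in the open disk, $\mathcal C$ and $\mathcal D$ have no real common tangent, and there is no point $A_0$ to approach.
\item Even when such points exist, the limiting value $2A_0+C_0$ of $A+B+C$ is just the unknown constant. A single degenerate position therefore gives no contradiction.
\item The fallback is a derivative you have not computed, of a formula for $x_H$ the paper never writes out. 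It is also evaluated on the line $OF$, which is undefined in the concentric case $O=F$, where $\hat R=c^2/R>0$.
\end{itemize}
So the ``only if'' direction remains unproved in your write-up.

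The gap can be closed in a few lines, for ellipses and hyperbolas alike. Suppose $H\equiv H_0$ on $\mathcal P$. Then all triangles of $\mathcal P$ share the nine-point circle $\mathcal N$, with center $N=\tfrac12(O+H_0)$ and radius $R/2$. Every side is a chord of $\mathcal C$, so it is the line through its midpoint $M\in\mathcal N$ perpendicular to $OM$. These lines envelope the negative pedal of $\mathcal N$ with respect to $O$. That is the central conic with foci $O$ and $2N-O=H_0$ whose auxiliary circle is $\mathcal N$. If instead $O\in\mathcal N$, the lines all pass through the antipode of $O$ on $\mathcal N$; this is impossible, since at most two tangents to $\mathcal D$ pass through a point.

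The sides of the triangles in $\mathcal P$ form infinitely many distinct lines tangent both to $\mathcal D$ and to this conic. Two distinct conics have at most four common tangents, so $\mathcal D$ is that conic. Hence $O$ is a focus of $\mathcal D$ and $H_0$ is the other one, which gives necessity and identifies the common orthocenter at once.

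When both foci are inside $\mathcal C$, you can also read off non-constancy from \eqref{eq:s1}. With $O=0$ and $R=1$, $H=s_1=a_1+a_2+\lambda\overline{a_1}\,\overline{a_2}$, and $\lambda$ sweeps $\mathbb T$.
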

\begin{proof}
Let $O$ be the center of the circumcircle, and let $d_+$ and $d_-$ denote the distances from $O$ to the two foci of $\mathcal D$.

By Theorem~\ref{thm:orthcirc}, the orthocenters of the triangles in $\mathcal P$ lie on a circle of radius
\[
\hat R=\frac{d_+d_-}{R},
\]
where $R$ is the circumradius.

The orthocenter is invariant throughout the family if and only if the orthocenter locus degenerates to a single point. This occurs precisely when
\[
\hat R=0.
\]
Since $R>0$, we must have $d_+=0$ or $d_-=0$. Hence the circumcenter $O$ coincides with one of the foci of $\mathcal D$.

Conversely, if $O$ coincides with a focus of $\mathcal D$, then either $d_+=0$ or $d_-=0$, and therefore
\[
\hat R=0.
\]
Thus the orthocenter locus reduces to a single point, so the orthocenter is invariant.

Finally, Theorem~\ref{thm:orthcirc} shows that when $O$ coincides with one focus of $\mathcal D$, the orthocenter locus degenerates to the other focus. Hence the common orthocenter is precisely the remaining focus of $\mathcal D$.
\end{proof}

\begin{corollary}\label{cor:ohindep}
Let $\mathcal P$ be a family of triangles inscribed in a circle $\mathcal C$ and circumscribed about a central conic $\mathcal D$. Then the orthocenters of the triangles in $\mathcal P$ lie on a circle concentric with $\mathcal C$ if and only if $\mathcal C$ and $\mathcal D$ are concentric.
\end{corollary}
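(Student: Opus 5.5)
This corollary follows directly from Theorem~\ref{thm:orthcirc}; no new computation is needed. That theorem says the orthocenters of the triangles in $\mathcal P$ lie on the circle $\Gamma$ with center $\hat O = 2F - O$ and radius $\hat R = d_+d_-/R$. Here $F$ is the center of $\mathcal D$ and $O$ is the center of $\mathcal C$. The claim therefore reduces to comparing $\hat O$ with $O$, after first checking that the orthocenter locus really is a circle and not a single point.

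\emph{Sufficiency.} Suppose $\mathcal C$ and $\mathcal D$ are concentric, so $O = F$. Then $\hat O = 2F - O = O$, and the orthocenters lie on a circle centered at $O$, i.e. concentric with $\mathcal C$.

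We must also check that this circle is nondegenerate. In the concentric case $d_+ = d_- = c = \sqrt{a-b}$, so $\hat R = c^2/R$. This is positive because the foci are distinct; we may assume this, since otherwise $\mathcal D$ is a circle. Even in that degenerate case the orthocenter coincides with $O$, and one can still regard it as a point circle concentric with $\mathcal C$.

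\emph{Necessity.} Suppose the orthocenter locus is a circle concentric with $\mathcal C$. We need the locus to be a genuine circle, so that its center is determined by the locus. By Corollary~\ref{cor:ohcoin}, the orthocenter varies along the family exactly when $O$ is not a focus, and in that case $\hat R > 0$. The orthocenter map $A \mapsto H$ traces a continuum of points of $\Gamma$ as $A$ ranges over the admissible arc of $\mathcal C$. Since a nontrivial arc determines its circle, the center of the locus is $\hat O$.

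Concentricity then gives $2F - O = O$, hence $F = O$, which is the desired conclusion.

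\emph{Main obstacle: the degenerate case $\hat R = 0$.} This happens when $O$ coincides with a focus. Then the locus is the single point $H$ equal to the other focus, by Corollary~\ref{cor:ohcoin}. This point equals $O$ only if the two foci coincide, which is excluded for a genuine central conic. So a point locus cannot be "concentric with $\mathcal C$" in the sense of being centered at $O$, unless one interprets the statement loosely. Either way this case does not break the equivalence.

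I would state this case explicitly in the proof, and otherwise cite Theorem~\ref{thm:orthcirc} for the center formula.
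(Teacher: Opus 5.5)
Your argument is the paper's: Theorem~\ref{thm:orthcirc} puts the orthocenters on the circle $\Gamma$ with center $\hat O=2F-O$, and $\hat O=O$ if and only if $F=O$. You also check that a nonconstant orthocenter fills an arc of $\Gamma$, so any circle containing all orthocenters must be $\Gamma$ itself. The paper's proof passes over this point silently, and your treatment of it is correct.

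What is wrong is your closing claim that the focal case does not break the equivalence ``either way.'' Suppose $O=F_+$. Then every orthocenter equals $F_-$ by Corollary~\ref{cor:ohcoin}, and this single point lies on the circle centered at $O$ of radius $|F_+F_-|=2c$. So under the literal reading, ``the orthocenters lie on some circle concentric with $\mathcal C$,'' the hypothesis holds. Yet $F\neq O$ because $c>0$, so the ``only if'' direction fails.

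This configuration actually occurs. For example, take the circle of radius $2\alpha$ centered at a focus of an ellipse with semi-major axis $\alpha$; it forms a $3$-Poncelet pair with the ellipse by \eqref{eq:genchappleeuler}.

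The corollary is true only in the sense the paper's proof uses it: the circle $\Gamma$ of Theorem~\ref{thm:orthcirc}, whose center $\hat O$ equals $F_-$ in this case, is concentric with $\mathcal C$. Your necessity argument in fact already reads the hypothesis this way. State that reading explicitly and drop the ``either way'' sentence.
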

\begin{proof}
Let $O$ and $F$ denote the centers of the circle $\mathcal C$ and the conic $\mathcal D$, respectively.

By Theorem~\ref{thm:orthcirc}, the orthocenters of the triangles in $\mathcal P$ lie on a circle with center $\hat{O}$, given by
\[
\hat O = 2F-O.
\]
Since the center of $\mathcal{C}$ is $O$,  
\begin{center}
    $\hat O = O$ if and only if $F=O$. 
\end{center}
Therefore, the orthocenter locus is concentric with the circumcircle $\mathcal C$ if and only if  $\mathcal{C}$ and the conic $\mathcal{D}$ are concentric.

This proves the result.
\end{proof}

\begin{figure}
    \centering
\begin{tikzpicture}[scale=1]
\clip(-5.5,-6.5) rectangle (5,6.5);
\draw [rotate around={0.:(0.,0.)},line width=1.pt,color=red] (0.,0.) ellipse (1.7320508075688772cm and 1.cm);
\draw [line width=1.pt,color=xfqqff] (-1.,-2.) circle (1.6357cm);
\draw [line width=1.pt,gray] (1.,2.) circle (3.9147cm);
\draw [line width=1.pt,color=blue] (-2.6306798312214315,3.4639125838525744)-- (-1.4915767685765848,-1.019424000747405);
\draw [line width=1.pt,color=blue] (-1.4915767685765848,-1.019424000747405)-- (3.5681473339179695,-0.9545719421415388);
\draw [line width=1.pt,color=blue] (3.5681473339179695,-0.9545719421415388)-- (-2.6306798312214315,3.4639125838525744);
\draw [line width=1.pt,dash pattern=on 4pt off 4pt] (-2.6306798312214315,3.4639125838525744)-- (-2.5541083004464036,-2.5101586816703114);
\draw [line width=1.pt,dash pattern=on 4pt off 4pt] (-1.4915767685765848,-1.019424000747405)-- (-2.5541083004464036,-2.5101586816703114);
\draw [line width=1.pt,dash pattern=on 4pt off 4pt] (3.5681473339179695,-0.9545719421415388)-- (-2.5541083004464036,-2.5101586816703114);
\draw [line width=1.pt,gray] (1.,2.) circle (1.6357cm);
%\draw [line width=1.pt,color=qqwwtt] (0.3333333333333333,0.6666666666666666) circle (0.5452333333333333cm);
%\draw [line width=1.pt,color=qqwwtt] (0.3333333333333333,0.6666666666666666) circle (1.3049cm);
%\draw [line width=1.pt,color=zzttqq] (0.,0.) circle (0.81785cm);
%\draw [line width=1.pt,color=zzttqq] (0.,0.) circle (1.95735cm);
\draw [line width=1.pt,color=blue] (1.0703356917083953,3.6341870702192884)-- (-0.5897093604852919,1.6148524449182575);
\draw [line width=1.pt,color=blue] (-0.5897093604852919,1.6148524449182575)-- (1.649697116630174,0.4988643809960281);
\draw [line width=1.pt,color=blue] (1.649697116630174,0.4988643809960281)-- (1.0703356917083953,3.6341870702192884);
\draw [line width=1.pt,dash pattern=on 4pt off 4pt] (1.0703356917083953,3.6341870702192884)-- (0.130323447853277,1.7479038961335736);
\draw [line width=1.pt,dash pattern=on 4pt off 4pt] (-0.5897093604852919,1.6148524449182575)-- (0.130323447853277,1.7479038961335736);
\draw [line width=1.pt,dash pattern=on 4pt off 4pt] (1.649697116630174,0.4988643809960281)-- (0.130323447853277,1.7479038961335736);
\draw [line width=1.pt,color=xfqqff] (-1.,-2.) circle (3.9147cm);
\begin{scriptsize}
\draw [fill=wewdxt] (-1.4142135623730951,0.) circle (1.2pt);
\draw[color=black] (-1.2,0.1) node {$F_-$};
\draw [fill=wewdxt] (1.4142135623730951,0.) circle (1.2pt);
\draw[color=black] (1.15,0.1) node {$F_+$};
\draw[color=red] (-1.3248062053567202,0.9) node {$\mathcal{D}$};
\draw [fill=wewdxt] (1.,2.) circle (1.2pt);
\draw[color=black] (1.15,2.2) node {$O$};
\draw[color=black] (-1.2766370229012276,5.7) node {$\mathcal{C}(O,R)$};
\draw [fill=wewdxt] (-2.6306798312214315,3.4639125838525744) circle (1.2pt);
\draw[color=black] (-2.75,3.7070242845916734) node {$A$};
\draw [fill=wewdxt] (-1.4915767685765848,-1.019424000747405) circle (1.2pt);
\draw[color=black] (-1.37,-0.79) node {$B$};
\draw [fill=wewdxt] (3.5681473339179695,-0.9545719421415388) circle (1.2pt);
\draw[color=black] (3.75,-1.0296119901984235) node {$C$};
\draw [fill=xfqqff] (-2.5541083004464036,-2.5101586816703114) circle (1.2pt);
\draw[color=xfqqff] (-2.75,-2.6) node {$H$};
\draw [fill=wewdxt] (0.,0.) circle (1.2pt);
\draw[color=black] (0.16843845076354716,0.1) node {$F$};
\draw[color=black] (0,3.7) node {$\mathcal{C}(O,\hat{R})$};
%\draw [fill=qqwwtt] (-0.18470308862667897,0.4966388803212622) circle (1.2pt);
%\draw[color=qqwwtt] (-0.42564813285419356,0.4315198776181827) node {$G$};
\draw [fill=wewdxt] (1.0703356917083953,3.6341870702192884) circle (1.2pt);
\draw[color=black] (1.228160464784382,3.899701014413644) node {$A'$};
\draw [fill=wewdxt] (-0.5897093604852919,1.6148524449182575) circle (1.2pt);
\draw[color=black] (-0.8270579866499643,1.587580256550003) node {$B'$};
\draw [fill=wewdxt] (1.649697116630174,0.4988643809960281) circle (1.2pt);
\draw[color=black] (1.9,0.5) node {$C'$};
%\draw [fill=qqwwtt] (0.7101078159510925,1.915967965377858) circle (1.2pt);
%\draw[color=qqwwtt] (0.85,2.15) node {$G'$};
\draw [fill=xfqqff] (0.130323447853277,1.7479038961335736) circle (1.2pt);
\draw[color=xfqqff] (0.5,1.9) node {$H'$};
\draw[color=black] (-4.3,1) node {$\mathcal{C}(\hat{O},R)$};
\draw[color=black] (1,-3) node {$\mathcal{C}(\hat{O},\hat{R})$};
%\draw [fill=zzttqq] (-0.7770546329400184,-0.25504167951810675) circle (1.2pt);
%\draw[color=zzttqq] (-0.95,-0.25) node {$N$};
%\draw [fill=zzttqq] (0.5651617239263883,1.8739519480667859) circle (1.2pt);
%\draw[color=zzttqq] (0.55,2.149554051864082) node {$N'$};
\draw [fill=wewdxt] (-1.,-2.) circle (1.2pt);
\draw[color=black] (-1.2,-1.85) node {$\hat{O}$};
\end{scriptsize}
\end{tikzpicture}
    \caption{Illustration of Theorem \ref{thm:orthcirc} and Corollary \ref{cor:orthhatcirc} when $\mathcal{D}$ is an ellipse.}
    \label{fig:orthcircell}
\end{figure}
\begin{figure}
    \centering
\definecolor{zzttqq}{rgb}{0.6,0.2,0.}
\definecolor{qqwwtt}{rgb}{0.,0.4,0.2}
\definecolor{blue}{rgb}{0.,0.,1.}
\definecolor{red}{rgb}{1.,0.,0.}
\definecolor{wewdxt}{rgb}{0.43137254901960786,0.42745098039215684,0.45098039215686275}
\definecolor{xfqqff}{rgb}{0.4980392156862745,0.,1.}
\begin{tikzpicture}[scale=1.2]
\clip(-5,-5) rectangle (5,6);
\draw [samples=100,domain=-0.99:0.99,rotate around={0.:(0.,0.)},xshift=0.cm,yshift=0.cm,line width=1.pt,color=red] plot ({1.3416407864998738*(1+(\x)^2)/(1-(\x)^2)},{0.4472135954999579*2*(\x)/(1-(\x)^2)});
\draw [samples=100,domain=-0.99:0.99,rotate around={0.:(0.,0.)},xshift=0.cm,yshift=0.cm,line width=1.pt,color=red] plot ({1.3416407864998738*(-1-(\x)^2)/(1-(\x)^2)},{0.4472135954999579*(-2)*(\x)/(1-(\x)^2)});
\draw [line width=1.pt,color=xfqqff] (-1.,-2.) circle (2.2361cm);
\draw [line width=1.pt,gray] (1.,2.) circle (2.8636cm);
\draw [line width=1.pt,color=blue] (-1.4158435483395562,3.5374995642100657)-- (1.8583740349308937,-0.7319222126840393);
\draw [line width=1.pt,color=blue] (1.8583740349308937,-0.7319222126840393)-- (-1.349215229399434,0.3625041172699806);
\draw [line width=1.pt,color=blue] (-1.349215229399434,0.3625041172699806)-- (-1.4158435483395562,3.5374995642100657);
\draw [line width=1.pt,dash pattern=on 4pt off 4pt] (-1.4158435483395562,3.5374995642100657)-- (-2.9065917492260636,-0.8316459818260992);
\draw [line width=1.pt,dash pattern=on 4pt off 4pt] (1.8583740349308937,-0.7319222126840393)-- (-2.9065917492260636,-0.8316459818260992);
\draw [line width=1.pt,dash pattern=on 4pt off 4pt] (-1.349215229399434,0.3625041172699806)-- (-2.9065917492260636,-0.8316459818260992);
\draw [line width=1.pt,gray] (1.,2.) circle (2.2361cm);
%\draw [line width=1.pt,color=qqwwtt] (0.3333333333333333,0.6666666666666666) circle (0.7453666666666666cm);
%\draw [line width=1.pt,color=qqwwtt] (0.3333333333333333,0.6666666666666666) circle (0.9545333333333333cm);
%\draw [line width=1.pt,color=zzttqq] (0.,0.) circle (1.11805cm);
%\draw [line width=1.pt,color=zzttqq] (0.,0.) circle (1.4318cm);
\draw [line width=1.pt,color=blue] (1.7185028288462356,4.117521403655688)-- (1.3214987906074975,-0.2128673113492179);
\draw [line width=1.pt,color=blue] (1.3214987906074975,-0.2128673113492179)-- (-0.8036054902428604,0.6782018136017093);
\draw [line width=1.pt,color=blue] (-0.8036054902428604,0.6782018136017093)-- (1.7185028288462356,4.117521403655688);
\draw [line width=1.pt,dash pattern=on 4pt off 4pt] (1.7185028288462356,4.117521403655688)-- (0.23639612921087247,0.5828559059081798);
\draw [line width=1.pt,dash pattern=on 4pt off 4pt] (1.3214987906074975,-0.2128673113492179)-- (0.23639612921087247,0.5828559059081798);
\draw [line width=1.pt,dash pattern=on 4pt off 4pt] (-0.8036054902428604,0.6782018136017093)-- (0.23639612921087247,0.5828559059081798);
\draw [line width=1.pt,color=xfqqff] (-1.,-2.) circle (2.8636cm);
\begin{scriptsize}
\draw [fill=wewdxt] (-1.4142135623730951,0.) circle (1.2pt);
\draw[color=black] (-1.65,-0.1) node {$F_-$};
\draw [fill=wewdxt] (1.4142135623730951,0.) circle (1.2pt);
\draw[color=black] (1.6501359156662927,0.13) node {$F_+$};
\draw[color=red] (2.2710057216853747,0.9211468005085226) node {$\mathcal{D}$};
\draw [fill=wewdxt] (1.,2.) circle (1.2pt);
\draw[color=black] (1.2,2.1) node {$O$};
\draw[color=black] (-2.428742936535019,-1.4523048630074613) node {$\mathcal{C}(\hat{O},\hat{R})$};
\draw[color=black] (-0.778329528129864,4.8) node {$\mathcal{C}(O,R)$};
\draw [fill=wewdxt] (-1.4158435483395562,3.5374995642100657) circle (1.2pt);
\draw[color=black] (-1.501367783240694,3.7975815980146486) node {$A$};
\draw [fill=wewdxt] (1.8583740349308937,-0.7319222126840393) circle (1.2pt);
\draw[color=black] (1.9409230400043438,-0.8707306143313591) node {$C$};
\draw [fill=wewdxt] (-1.349215229399434,0.3625041172699806) circle (1.2pt);
\draw[color=black] (-1.5956771208638456,0.45) node {$B$};
\draw [fill=xfqqff] (-2.9065917492260636,-0.8316459818260992) circle (1.2pt);
\draw[color=xfqqff] (-3.1,-0.7) node {$H$};
\draw [fill=wewdxt] (0.,0.) circle (1.2pt);
\draw[color=black] (0.11760917929007732,0.15) node {$F$};
\draw[color=black] (0.5,3.8290180438890324) node {$\mathcal{C}(O,\hat{R})$};
%\draw [fill=qqwwtt] (-0.30222824760279293,1.056027156264261) circle (1.2pt);
%\draw[color=qqwwtt] (-0.18,0.95) node {$G$};
\draw [fill=wewdxt] (1.7185028288462356,4.117521403655688) circle (1.2pt);
\draw[color=black] (1.878050148255576,4.37915584669075) node {$A'$};
\draw [fill=wewdxt] (1.3214987906074975,-0.2128673113492179) circle (1.2pt);
\draw[color=black] (1.25,-0.4) node {$C'$};
\draw [fill=wewdxt] (-0.8036054902428604,0.6782018136017093) circle (1.2pt);
\draw[color=black] (-0.7940477510670559,1) node {$B'$};
%\draw [fill=qqwwtt] (0.7454653764036242,1.5276186353027268) circle (1.2pt);
%\draw[color=qqwwtt] (0.887802103212483,1.70705794736812) node {$G'$};
\draw [fill=xfqqff] (0.23639612921087247,0.5828559059081798) circle (1.2pt);
\draw[color=xfqqff] (0.47912830684549224,0.7) node {$H'$};
\draw[color=black] (-3.6,0.2) node {$\mathcal{C}(\hat{O},R)$};
% \draw [fill=zzttqq] (-0.9533423714041894,0.5840407343976216) circle (1.2pt);
%\draw[color=zzttqq] (-1.1555668786224709,0.5124730041415321) node {$N$};
%\draw [fill=zzttqq] (0.6181980646052937,1.2914279529540862) circle (1.2pt);
%\draw[color=zzttqq] (0.7149016509033715,1.1) node {$N'$};
\draw [fill=wewdxt] (-1.,-2.) circle (1.2pt);
\draw[color=black] (-1.1,-1.8) node {$\hat{O}$};
\end{scriptsize}
\end{tikzpicture}
    \caption{Illustration of Theorem \ref{thm:orthcirc} and Corollary \ref{cor:orthhatcirc} when $\mathcal{D}$ is a hyperbola.}
    \label{fig:orthcirchyp}
\end{figure}

\begin{corollary}\label{cor:orthcoinfoc}
Let $\triangle ABC$ be a triangle circumscribed about a central conic $\mathcal{D}$. Then the nine-point circle of $\triangle ABC$ coincides with the auxiliary circle of $\mathcal{D}$ if and only if the circumcenter of $\triangle ABC$ coincides with one of the foci of $\mathcal D$.
\end{corollary}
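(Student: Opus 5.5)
The plan is to compare the two circles through their centers and radii, using Theorem~\ref{thm:orthcirc} and Corollary~\ref{cor:ohcoin} for the centers and the generalized Chapple--Euler relation \eqref{eq:genchappleeuler} for the radii. Place the center $F$ of $\mathcal D$ at the origin, as in the proof of Theorem~\ref{thm:orthcirc}, and let $O$, $H$, $R$ be the circumcenter, orthocenter and circumradius of $\triangle ABC$. The nine-point circle has center $N=(O+H)/2$ and radius $R/2$. The auxiliary circle of $\mathcal D$ has center $F$ and radius $\sqrt a$; for a hyperbola this is the circle on the transverse axis.

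\emph{Sufficiency.} Suppose $O$ coincides with a focus, say $d_+=0$, so that $d_-=2c$ with $c^2=a-b$. By Corollary~\ref{cor:ohcoin}, $H$ is the other focus. Hence $N$ is the midpoint of the two foci, which is $F$. For the radius, substitute $d_+=0$, $d_-=2c$ into \eqref{eq:genchappleeuler}:
\[
R^2\left(R^2-4c^2\right)=4\varepsilon\beta^2R^2 .
\]
Since $R>0$, this gives $R^2=4(c^2+\varepsilon\beta^2)$. In the notation of \eqref{eq:conicD} we have $\varepsilon\beta^2=b$, so $c^2+\varepsilon\beta^2=a$ and $R^2=4a$. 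Thus $R/2=\sqrt a$, and the two circles coincide.

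\emph{Necessity.} Suppose the nine-point circle equals the auxiliary circle. Then their centers agree, $N=F$, which forces $H=2F-O=\hat O$. This $\hat O$ is exactly the center of the orthocenter circle in Theorem~\ref{thm:orthcirc}, and that theorem says $H$ lies on the circle centered at $\hat O$ with radius $\hat R=d_+d_-/R$. A point of a circle can equal its center only if the radius is zero, so $\hat R=0$. Since $R>0$, this means $d_+d_-=0$, so $O$ is a focus of $\mathcal D$. This argument uses only equality of the centers; the radius condition then follows automatically from the sufficiency computation.

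The step needing most care is the radius identification. One has to check that $\varepsilon\beta^2=b$ holds in both the elliptic and hyperbolic cases, so that $c^2+\varepsilon\beta^2=a$ in both. One also has to fix what "auxiliary circle" means for a hyperbola, namely radius $\sqrt a$, the semi-transverse axis, with $a>0$ in \eqref{eq:conicD}. Note that necessity needs no family argument: it applies to the single given triangle, because Theorem~\ref{thm:orthcirc} places every Poncelet triangle's orthocenter on the circle $\mathcal C(\hat O,\hat R)$.
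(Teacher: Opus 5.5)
Your proof is correct. It follows the paper's outline: identify the centers using Theorem~\ref{thm:orthcirc} and Corollary~\ref{cor:ohcoin}, then compare the radii. In both directions, though, your key step differs from the paper's, and yours is the one that holds up.

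\textbf{Sufficiency.} The paper applies the homothety at $H=F_-$ with ratio $\tfrac12$. It says the image circle has center at the midpoint of $\overline{F_+F_-}$ and radius $|F_+F_-|/2$, and calls this the auxiliary circle. That radius is wrong for both circles:
\begin{itemize}
\item the homothety gives radius $R/2$;
\item the circle of radius $c=|F_+F_-|/2$ about the center is the circle through the foci, whereas the auxiliary circle has radius $\sqrt a$.
\end{itemize}
Your substitution of $d_+=0$, $d_-=2c$ into \eqref{eq:genchappleeuler}, giving $R^2=4(c^2+\varepsilon\beta^2)=4a$, is the ingredient the paper's argument lacks. Your check that $\varepsilon\beta^2=b$ makes it cover hyperbolas as well. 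Panel (A) of Figure~\ref{fig:sqsumfoccen} agrees with your computation, not the paper's text: there $R=4$, the green circle has radius $2=\sqrt a$, and $c=\sqrt2$.

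\textbf{Necessity.} The paper equates the radii using the same incorrect value, obtaining $R=|F_+F_-|$. It then asserts $\{O,H\}=\{F_+,F_-\}$ without really deriving it. Your argument uses only the centers: $N=F$ forces $H=\hat O$, the center of the orthocenter circle, so $\hat R=d_+d_-/R=0$. That closes the argument cleanly, and it applies to the single given triangle.
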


\begin{proof}
Let $O$ be the circumcenter of $\triangle ABC$, and let $F_+$ and $F_-$ denote the foci of $\mathcal D$.

Assume first that $O$ coincides with one of the foci of $\mathcal D$, say $O=F_+$. Then $d_+=0$, and Theorem~\ref{thm:orthcirc} implies that the orthocenter of $\triangle ABC$ is the other focus:
\[
H=F_-.
\]
The nine-point circle is obtained from the circumcircle by the homothety centered at $H$ with ratio $1/2$. Since $H=F_-$ and $O=F_+$, the image of the circumcircle under this homothety has center equal to the midpoint of $\overline{F_+F_-}$ and radius equal to
\[
\frac{|F_+F_-|}{2}.
\]
But this is precisely the auxiliary circle of $\mathcal D$. Hence the nine-point circle coincides with the auxiliary circle.

Conversely, suppose that the nine-point circle coincides with the auxiliary circle of $\mathcal D$. The center of the nine-point circle is the midpoint of $OH$, whereas the center of the auxiliary circle is the midpoint of $\overline{F_+F_-}$. Therefore,
\[
\operatorname{mid}(O,H)=\operatorname{mid}(F_+,F_-).
\]
Since the two circles are equal, their radii are also equal. The radius of the nine-point circle is $R/2$, while the radius of the auxiliary circle is $|F_+F_-|/2$. Hence
\[
R=|F_+F_-|.
\]
By Theorem~\ref{thm:orthcirc}, the orthocenter locus degenerates precisely when the circumcenter coincides with a focus of $\mathcal D$, in which case the orthocenter is the other focus. It follows that $\{O,H\}=\{F_+,F_-\}$, and therefore $O$ coincides with one of the foci of $\mathcal D$.

This completes the proof.
\end{proof}

\begin{corollary}\label{cor:orthhatcirc} 
Let $\mathcal{C}(O,R)$ denote a circle of radius $R$ centered at $O$ and $\mathcal{D}$ be a central conic. Suppose that $(\mathcal{C}(O,R),\mathcal{D})$ is a $3$-Poncelet pair. Let $d_+$ and $d_-$ denote the distances from $O$ to the foci of $\mathcal{D}$. Let $\mathcal{P}$ and $\hat{\mathcal{P}}$ be the (nonempty)  families of Poncelet triangles associated with the pairs $(\mathcal{C}(O,R),\mathcal{D})$ and $(\mathcal{C}(O,\hat{R}),\mathcal{D})$, respectively, where $\hat{R}=d_+d_-/R$. Let $\triangle \hat{A}\hat{B}\hat{C}\in \hat{\mathcal{P}}$ and $\hat H$ be its orthocenter. Then $\hat{H}\in \mathcal{C}(\hat{O},R)$ where $\hat{O}$ is the reflection of $O$ about the center of $\mathcal{D}$.
\end{corollary}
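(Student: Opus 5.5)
The plan is to apply Theorem~\ref{thm:orthcirc} directly to the family $\hat{\mathcal P}$, with the roles of $R$ and $\hat R$ interchanged.

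First, by Proposition~\ref{prop:Rhat}, since $(\mathcal C(O,R),\mathcal D)$ is a $3$-Poncelet pair and $R\hat R=d_+d_-$, the pair $(\mathcal C(O,\hat R),\mathcal D)$ is also a $3$-Poncelet pair. Hence $\hat{\mathcal P}$ is nonempty, as the statement assumes. Here we use $d_+d_->0$, i.e.\ $O$ is not a focus; otherwise $\hat R=0$ and the concentric circle degenerates. In the degenerate case the claim is vacuous, or it reduces to Corollary~\ref{cor:ohcoin}.

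Next, take $\triangle\hat A\hat B\hat C\in\hat{\mathcal P}$. Its circumcircle is $\mathcal C(O,\hat R)$, so its circumcenter is $O$ and its circumradius is $\hat R$. The distances from this circumcenter to the foci of $\mathcal D$ are therefore the same $d_+$ and $d_-$. Theorem~\ref{thm:orthcirc} applied to $\hat{\mathcal P}$ says that the orthocenter $\hat H$ lies on a circle with two properties:
\begin{itemize}
\item its center is the reflection of $O$ in the center of $\mathcal D$, which is exactly $\hat O$;
\item its radius is $d_+d_-/\hat R$.
\end{itemize}

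Finally, the defining relation $\hat R=d_+d_-/R$ gives $d_+d_-/\hat R=R$. So $\hat H\in\mathcal C(\hat O,R)$, as required.

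The only point needing care is that Theorem~\ref{thm:orthcirc} is stated for an arbitrary Poncelet family of a circle and a central conic, with no restriction on the radius. So it applies verbatim to $\hat{\mathcal P}$ once Proposition~\ref{prop:Rhat} guarantees that $\hat{\mathcal P}$ exists. I expect no genuine obstacle beyond this bookkeeping of the swapped radii.
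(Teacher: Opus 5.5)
Your proposal is correct and follows the paper's argument: the paper also just applies Theorem~\ref{thm:orthcirc} to $\hat{\mathcal P}$. There the circumcenter is still $O$ with the same focal distances $d_\pm$, so the orthocenter locus has center $\hat O$ and radius $d_+d_-/\hat R=R$. Your use of Proposition~\ref{prop:Rhat} to confirm that $\hat{\mathcal P}$ is nonempty, and your treatment of the case $d_+d_-=0$, are harmless additions that the paper leaves implicit.
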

\begin{proof}
    The proof follows from Theorem \ref{thm:orthcirc}. See Figures \ref{fig:orthcircell}--\ref{fig:orthcirchyp}.
\end{proof}

This following identity is classical (see, e.g., \cite{Johnson2007} p. 174 or  \cite{AltshillerCourt1952}, p. 70). We, however, include a proof for completeness.
\begin{theorem}\label{thm:sqsumlengthP}
Let $\triangle ABC$ be a triangle with centroid $G$. For any point $P$ in the plane,
\begin{equation}\label{eq:sqsumlengthP}
|AP|^2+|BP|^2+|CP|^2=|AG|^2+|BG|^2+|CG|^2+3|PG|^2.
\end{equation}
\end{theorem}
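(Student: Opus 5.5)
The identity will follow from a vector computation with the centroid as the reference point. Work in the plane as a real inner product space and write every position vector relative to $G$. Put $\mathbf{u}=\vec{GA}$, $\mathbf{v}=\vec{GB}$, $\mathbf{w}=\vec{GC}$ and $\mathbf{p}=\vec{GP}$. The definition of the centroid, $G=\tfrac13(A+B+C)$, is then exactly the relation
\[
\mathbf{u}+\mathbf{v}+\mathbf{w}=\mathbf{0}.
\]
The proof uses this single fact about $G$.

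Next, expand each term on the left-hand side of \eqref{eq:sqsumlengthP} with the polarization identity:
\[
|AP|^2=|\mathbf{u}-\mathbf{p}|^2=|\mathbf{u}|^2-2\,\mathbf{u}\cdot\mathbf{p}+|\mathbf{p}|^2 .
\]
The expansions for $|BP|^2$ and $|CP|^2$ are the same with $\mathbf{v}$ and $\mathbf{w}$ in place of $\mathbf{u}$. Adding the three expansions gives
\[
|\mathbf{u}|^2+|\mathbf{v}|^2+|\mathbf{w}|^2-2(\mathbf{u}+\mathbf{v}+\mathbf{w})\cdot\mathbf{p}+3|\mathbf{p}|^2 .
\]
The middle cross term vanishes because $\mathbf{u}+\mathbf{v}+\mathbf{w}=\mathbf{0}$. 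What remains is precisely $|AG|^2+|BG|^2+|CG|^2+3|PG|^2$, which is the right-hand side of \eqref{eq:sqsumlengthP}.

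There is no real obstacle here. The only point needing care is that the cross terms cancel exactly because the base point is the centroid, so that point should be stated explicitly.

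As an alternative in the coordinate style of the rest of the paper, one could place $G$ at the origin, so that $x_A+x_B+x_C=0$ and $y_A+y_B+y_C=0$. Expanding $\sum (x_X-x_P)^2+(y_X-y_P)^2$ over $X\in\{A,B,C\}$ then gives the same cancellation of the linear terms in $x_P$ and $y_P$.
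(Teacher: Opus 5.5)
Your proof is correct and is essentially the paper's argument: the paper writes $A-P=(A-G)+(G-P)$ in coordinates, expands, and kills the cross terms using $x_A+x_B+x_C=3x_G$ and $y_A+y_B+y_C=3y_G$, which is exactly your vector computation with $\mathbf{u}+\mathbf{v}+\mathbf{w}=\mathbf{0}$. One small point of wording: the expansion $|\mathbf{u}-\mathbf{p}|^2=|\mathbf{u}|^2-2\,\mathbf{u}\cdot\mathbf{p}+|\mathbf{p}|^2$ is just bilinearity of the inner product, not the polarization identity.
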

\begin{proof} 

Let $A=(x_A,y_A)$, $B=(x_B,y_B)$, $C=(x_C,y_C)$, $P=(x_P,y_P)$, and let $G=(x_G,y_G)$ be the centroid of $\triangle ABC$. 

We write
\begin{align*}
|AP|^2 
&= (x_A-x_P)^2+(y_A-y_P)^2 \\
&= \bigl((x_A-x_G)+(x_G-x_P)\bigr)^2 
  + \bigl((y_A-y_G)+(y_G-y_P)\bigr)^2 \\
&= |AG|^2 + |PG|^2 
+ 2(x_A-x_G)(x_G-x_P) 
+ 2(y_A-y_G)(y_G-y_P).
\end{align*}

Summing analogous expressions for $B$ and $C$, we obtain
\begin{align*}
|AP|^2+|BP|^2+|CP|^2
&= |AG|^2+|BG|^2+|CG|^2 + 3|PG|^2 \\
&\quad + 2\bigl[(x_A+x_B+x_C-3x_G)(x_G-x_P)\bigr] \\
&\quad + 2\bigl[(y_A+y_B+y_C-3y_G)(y_G-y_P)\bigr].
\end{align*}

Since $G$ is the centroid, we have
\[
x_A+x_B+x_C = 3x_G,
\quad
y_A+y_B+y_C = 3y_G,
\]
so the last two terms vanish. This gives the equation \eqref{eq:sqsumlengthP} and proves the lemma.
\end{proof}

\begin{corollary}\label{cor:sqsumlength}
Let $\triangle ABC$ be a triangle with circumcenter $O$ and orthocenter $H$, and let $R$ be its circumradius. Then
\begin{equation}\label{eq:sqsumlength}
    |AB|^2+|BC|^2+|CA|^2=9R^2-|OH|^2.
\end{equation}
\end{corollary}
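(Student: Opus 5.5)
The plan is to derive the identity from Theorem~\ref{thm:sqsumlengthP} together with two classical facts: the Euler line relation $\vec{OH}=3\vec{OG}$ (already used in the proof of Theorem~\ref{thm:orthcirc}), and the median-length formula.

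First, apply Theorem~\ref{thm:sqsumlengthP} with $P=O$. Since $|OA|=|OB|=|OC|=R$, this gives
\[
3R^2=|AG|^2+|BG|^2+|CG|^2+3|OG|^2 .
\]

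Second, express $|AG|^2+|BG|^2+|CG|^2$ through the side lengths. The vertex $A$ lies at distance $\tfrac23 m_a$ from $G$, where $m_a$ is the median from $A$. Apollonius's identity gives $m_a^2=\tfrac14(2b^2+2c^2-a^2)$, and similarly for $m_b$ and $m_c$. Summing the three yields
\[
|AG|^2+|BG|^2+|CG|^2=\tfrac49\cdot\tfrac34(a^2+b^2+c^2)=\tfrac13(a^2+b^2+c^2).
\]
To stay self-contained, I would instead obtain this from Theorem~\ref{thm:sqsumlengthP} with $P=A$, then $P=B$, then $P=C$, and sum the three identities. The left-hand sides add up to $2(a^2+b^2+c^2)$. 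The right-hand sides add up to $3\Sigma+3\Sigma=6\Sigma$, where $\Sigma=|AG|^2+|BG|^2+|CG|^2$. Hence $\Sigma=\tfrac13(a^2+b^2+c^2)$.

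Third, use $H=3G-2O$, i.e. $|OH|=3|OG|$, so that $3|OG|^2=\tfrac13|OH|^2$. Substituting both facts gives
\[
3R^2=\tfrac13(a^2+b^2+c^2)+\tfrac13|OH|^2 .
\]
Multiplying by $3$ yields \eqref{eq:sqsumlength}.

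There is no real obstacle here. The only point requiring care is the bookkeeping in the second step, and the Euler-line relation, which is standard and already invoked earlier in the paper.
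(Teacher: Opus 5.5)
Your proposal is correct and follows essentially the same route as the paper: Theorem~\ref{thm:sqsumlengthP} with $P=A,B,C$ summed gives $|AB|^2+|BC|^2+|CA|^2=3\bigl(|AG|^2+|BG|^2+|CG|^2\bigr)$, then $P=O$ and the Euler-line relation $|OH|=3|OG|$ finish. The only difference is that you mention the Apollonius median formula as an alternative, which the paper does not use in this proof.
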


\begin{proof}
We begin with Theorem \ref{thm:sqsumlengthP}

\noindent\textbf{Step 1.} Taking $P=A$ in \eqref{eq:sqsumlengthP}, we obtain
\[
|AB|^2+|AC|^2 = 4|AG|^2+|BG|^2+|CG|^2.
\]
Similarly, taking $P=B$ and $P=C$, we get analogous expressions. Summing the three resulting identities yields
\begin{equation}\label{eq:sumAG}
|AB|^2+|BC|^2+|CA|^2 = 3\bigl(|AG|^2+|BG|^2+|CG|^2\bigr).
\end{equation}

\medskip

\noindent\textbf{Step 2.} Now take $P=O$ in \eqref{eq:sqsumlengthP}. Since $|OA|=|OB|=|OC|=R$, we obtain
\[
3R^2 = |AG|^2+|BG|^2+|CG|^2+3|OG|^2,
\]
and hence
\begin{equation}\label{eq:AGrelation}
|AG|^2+|BG|^2+|CG|^2 = 3\bigl(R^2 - |OG|^2\bigr).
\end{equation}

\medskip

\noindent\textbf{Step 3.} Substituting \eqref{eq:AGrelation} into \eqref{eq:sumAG}, we obtain
\[
|AB|^2+|BC|^2+|CA|^2
=
9\bigl(R^2 - |OG|^2\bigr).
\]
Finally, using the property of Euler line, $|OH| = 3|OG|$, we obtain the desired relation \eqref{eq:sqsumlength}.
\end{proof}

As an immediate consequence of Corollary \ref{cor:sqsumlength}, for a family $\mathcal P$ of triangles inscribed in a fixed circle and circumscribed about a central conic $\mathcal D$, the sum of the areas of the squares constructed on the sides of a triangle in $\mathcal P$ is invariant throughout the family if and only if the orthocenters of the triangles either coincide or lie on a circle centered at $O$.

\begin{theorem}\label{thm:sqsumconcen}
Let $\triangle ABC$ be a triangle circumscribed about a central conic $\mathcal{D}$. If the circumcenter of $\triangle ABC$ coincides with the center of $\mathcal D$, then
\begin{equation}\label{eq:sqsumconcen}
|AB|^2 + |BC|^2 + |CA|^2 = 9R^2 - \frac{c^4}{R^2},
\end{equation}
where $R$ is the circumradius of $\triangle ABC$ and $2c$ is the distance between the foci of $\mathcal{D}$.
\end{theorem}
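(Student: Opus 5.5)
The plan is to combine Corollary~\ref{cor:sqsumlength} with Theorem~\ref{thm:orthcirc}. By Corollary~\ref{cor:sqsumlength},
\[
|AB|^2+|BC|^2+|CA|^2 = 9R^2-|OH|^2,
\]
where $H$ is the orthocenter of $\triangle ABC$. So it suffices to show that $|OH|^2=c^4/R^2$ whenever the circumcenter $O$ coincides with the center $F$ of $\mathcal D$.

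First, note that $\triangle ABC$ is inscribed in its circumcircle $\mathcal C(O,R)$ and circumscribed about $\mathcal D$. Hence $(\mathcal C(O,R),\mathcal D)$ is a $3$-Poncelet pair and $\triangle ABC$ belongs to the associated family $\mathcal P$, so Theorem~\ref{thm:orthcirc} applies. It says that $H$ lies on the circle with center $\hat O=2F-O$ and radius $\hat R=d_+d_-/R$, where $d_\pm$ are the distances from $O$ to the foci $F_\pm$.

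Next, substitute $O=F$. Then $\hat O=2F-O=O$, so the orthocenter circle is concentric with the circumcircle, in agreement with Corollary~\ref{cor:ohindep}. Consequently $|OH|=\hat R$. Since the foci are symmetric about the center $F=O$ and $|F_+F_-|=2c$, we get $d_+=d_-=c$. Therefore $\hat R=c^2/R$ and
\[
|OH|^2=\frac{c^4}{R^2}.
\]
Substituting this into the identity from Corollary~\ref{cor:sqsumlength} gives \eqref{eq:sqsumconcen}.

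Corollary~\ref{cor:circentell} shows that $\mathcal D$ is necessarily an ellipse here, so $c=\sqrt{a-b}$ is real and the formula is unambiguous. No step requires real computation. The only point needing care is confirming that Theorem~\ref{thm:orthcirc} applies to an individual triangle rather than only to a family, and this is immediate because any such triangle generates a Poncelet family with its circumcircle.

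As a cross-check, one could verify the result directly from Proposition~\ref{prop:Rhat}: in the concentric case $\hat R=d_+d_-/R=c^2/R$ agrees with the computation above.
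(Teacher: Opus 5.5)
Your proposal is correct and follows essentially the paper's proof. You combine $|AB|^2+|BC|^2+|CA|^2=9R^2-|OH|^2$ from Corollary~\ref{cor:sqsumlength} with Theorem~\ref{thm:orthcirc}; when $O$ is the center of $\mathcal D$, this gives $\hat O=O$ and $d_+=d_-=c$, hence $|OH|=c^2/R$. The appeal to Corollary~\ref{cor:circentell} and the cross-check via Proposition~\ref{prop:Rhat} are harmless but unnecessary, since $c$ is real for any central conic.
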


\begin{proof}
Let $O$ be the circumcenter and $H$ be the orthocenter of $\triangle ABC$. Without loss of generality, assume $O=(0,0)$. Then $d_+=d_-=c$. 

It then follows from Theorem \ref{thm:orthcirc} that the distance from $O$ to $H$ is
\[
|OH| = \frac{c^2}{R}.
\]

Equation \eqref{eq:sqsumconcen} now follows from Corollary \ref{cor:sqsumlength}.
\end{proof}

\input{Figures/fig.sqsumconcen}

\begin{theorem}\label{thm:sqsumfoccen} 
Let $\triangle ABC$ be a triangle circumscribed about a central conic $\mathcal{D}$. If the circumcenter of $\triangle ABC$ coincides with one of the foci of $\mathcal D$, then
\begin{equation}\label{eq:sqsumfoccen}
|AB|^2 + |BC|^2 + |CA|^2 = 9R^2 - 4c^2,
\end{equation}
where $R$ is the circumradius of $\triangle ABC$ and $2c$ is the distance between the foci of $\mathcal{D}$.
\end{theorem}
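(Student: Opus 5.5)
The plan mirrors the proof of Theorem~\ref{thm:sqsumconcen}: reduce everything to computing $|OH|$ and then apply Corollary~\ref{cor:sqsumlength}, which gives
\[
|AB|^2+|BC|^2+|CA|^2=9R^2-|OH|^2 .
\]
So it suffices to show that $|OH|=2c$ when the circumcenter $O$ coincides with a focus of $\mathcal D$.

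Let $O$ be the circumcenter and suppose, say, $O=F_+$. Then the triangle $\triangle ABC$ belongs to the family $\mathcal P$ of Poncelet triangles for the pair $(\mathcal C(O,R),\mathcal D)$. By Corollary~\ref{cor:ohcoin}, which follows from Theorem~\ref{thm:orthcirc} with $d_+=0$ and hence $\hat R=d_+d_-/R=0$, the orthocenter $H$ of every triangle in this family is the other focus $F_-$. We can also see this directly: the orthocenter circle has center $\hat O=2F-O$, where $F$ is the center of $\mathcal D$. Since $F$ is the midpoint of $\overline{F_+F_-}$, we get $\hat O=F_+ + F_- - F_+ = F_-$, and the radius is zero. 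Therefore
\[
|OH|=|F_+F_-|=2c .
\]
The case $O=F_-$ is symmetric.

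Substituting into Corollary~\ref{cor:sqsumlength} gives $9R^2-4c^2$, which is \eqref{eq:sqsumfoccen}.

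The only point needing care is the hypothesis of Theorem~\ref{thm:orthcirc}: it describes a family of triangles inscribed in a circle and circumscribed about $\mathcal D$, while here we start from a single triangle. This is harmless. The given triangle is a Poncelet triangle for the pair formed by its circumcircle and $\mathcal D$, so that pair is a $3$-Poncelet pair, and the identity $(x_H+x_O)^2+(y_H+y_O)^2=d_+^2d_-^2/R^2$ from that proof applies to the triangle itself. Beyond this, no real obstacle remains. The result holds for both ellipses and hyperbolas, since Theorem~\ref{thm:orthcirc} makes no distinction between them.
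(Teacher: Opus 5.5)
Your proposal is correct and follows essentially the same route as the paper: show $H=F_-$ via Theorem~\ref{thm:orthcirc}, so that $|OH|=2c$, then substitute into Corollary~\ref{cor:sqsumlength}. The only difference is that the paper cites Corollary~\ref{cor:orthcoinfoc}, whereas you cite Corollary~\ref{cor:ohcoin} and check $\hat O=F_-$ and $\hat R=0$ directly.
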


\begin{proof}
Let $O$ be the circumcenter of $\triangle ABC$ and let $H$ be its orthocenter. Let $F_\pm$ denote the foci of $\mathcal{D}$. Without loss of generality, assume that $O = F_+$. 

By Corollary~\ref{cor:orthcoinfoc}, we then have $H = F_-$. Hence,
\[
|OH| = |F_+F_-| = 2c.
\]
The desired invariance of $|AB|^2 + |BC|^2 + |CA|^2$ now follows from Corollary \ref{cor:sqsumlength}. Substituting $|OH| = 2c$ into \eqref{eq:sqsumlength} immediately yields \eqref{eq:sqsumfoccen}. See Figure~\ref{fig:sqsumfoccen}.
\end{proof}

\begin{figure}
    \centering
\begin{subfigure}{0.48\textwidth}
\begin{tikzpicture}[scale=0.7]
\clip(-3,-4.1) rectangle (6,4.1);
\draw [rotate around={0.:(0.,0.)},line width=1.pt,color=red] (0.,0.) ellipse (2.cm and 1.4142135623730951cm);
\draw [line width=1.pt,gray] (1.4142135623730951,0.) circle (4.cm);
\draw [line width=1.pt,color=blue] (-1.0935589229147056,3.1162601242568706)-- (-2.4073430601967365,-1.1813995854462884);
\draw [line width=1.pt,color=blue] (-2.4073430601967365,-1.1813995854462884)-- (4.915115545484537,-1.934860538810581);
\draw [line width=1.pt,color=blue] (4.915115545484537,-1.934860538810581)-- (-1.0935589229147056,3.1162601242568706);
\draw [line width=1.pt,dash pattern=on 3pt off 3pt] (-1.5363, -1.2805)-- (-1.0935589229147056,3.1162601242568706);
\draw [line width=1.pt,dash pattern=on 3pt off 3pt] (0.22477, 1.9873)-- (-2.4073430601967365,-1.1813995854462884);
\draw [line width=1.pt,dash pattern=on 3pt off 3pt] (-1.9925, 0.17283)-- (4.915115545484537,-1.934860538810581);
\draw [line width=1.pt,domain=-4.672287221642436:10.139274188933209] plot(\x,{(-0.-0.*\x)/2.8284271247461903});
\draw [line width=1.pt,color=brown] (-2.4073430601967365,-1.1813995854462884)-- (1.9324188630343289,2.0416331413245037);
\draw [line width=1.pt,color=brown] (4.915115545484537,-1.934860538810581)-- (1.9324188630343289,2.0416331413245037);
\draw [line width=1.pt,color=brown] (1.9324188630343289,2.0416331413245037)-- (-1.0935589229147056,3.1162601242568706);
\draw [line width=1.pt,color=green] (0.,0.) circle (2.cm);
\begin{scriptsize}
\draw [fill=wewdxt] (-1.4142135623730951,0.) circle (2.0pt);
\draw[color=black] (-1.7,0.29433934181879784) node {$F_-$};
\draw [fill=wewdxt] (1.4142135623730951,0.) circle (2.0pt);
\draw[color=black] (1.4876612154661273,0.3069235061438408) node {$F_+$};
\draw[color=red] (-0.65,1.62) node {$\mathcal{D}$};
\draw[color=black] (-0.5,3.8) node {$\mathcal{C}$};
\draw [fill=wewdxt] (-1.0935589229147056,3.1162601242568706) circle (2.0pt);
\draw[color=black] (-1.2368103609056884,3.3837516836168553) node {$A$};
\draw [fill=wewdxt] (-2.4073430601967365,-1.1813995854462884) circle (2.0pt);
\draw[color=black] (-2.67,-1.3) node {$B$};
\draw [fill=wewdxt] (4.915115545484537,-1.934860538810581) circle (2.0pt);
\draw[color=black] (5.2,-1.93) node {$C$};
\draw [fill=wewdxt] (0.,0.) circle (2.0pt);
\draw[color=black] (0.072,0.27) node {$N$};
% \draw [fill=gray] (1.,0.) circle (2.0pt);
%\draw[color=black] (1.0,-0.35) node {$P_\lambda$};
\draw [fill=gray] (-1.750450991555721,0.9674302694052911) circle (2.0pt);
\draw [fill=gray] (1.2538862426439001,-1.5581300621284346) circle (2.0pt);
\draw [fill=gray] (1.9107783112849157,0.5906997927231448) circle (2.0pt);
% \draw [fill=wewdxt] (0.47140452079103173,0.) circle (2.0pt);
% \draw[color=black] (0.56,0.25) node {$G$};
\draw [fill=gray] (1.9324188630343289,2.0416331413245037) circle (2.0pt);
\draw [fill=gray] (-1.5363, -1.2805) circle (2.0pt);
\draw [fill=gray] (0.22477, 1.9873) circle (2.0pt);
\draw [fill=gray] (-1.9925, 0.17283) circle (2.0pt);
\draw[color=black] (2.1,2.3) node {$P$};
\end{scriptsize}
\end{tikzpicture}
    \caption{$\mathcal{D}$ is an ellipse.}
    \label{fig:sqsumfoccen(A)}
\end{subfigure}
\begin{subfigure}{0.48\textwidth}
\begin{tikzpicture}[scale=1.1]
\clip(-3.5,-2.5) rectangle (3.5,2.5);
\draw [samples=100,domain=-0.99:0.99,rotate around={0.:(0.,0.)},xshift=0.cm,yshift=0.cm,line width=1.pt,color=red] plot ({1.*(1+(\x)^2)/(1-(\x)^2)},{1.*2*(\x)/(1-(\x)^2)});
\draw [samples=100,domain=-0.99:0.99,rotate around={0.:(0.,0.)},xshift=0.cm,yshift=0.cm,line width=1.pt,color=red] plot ({1.*(-1-(\x)^2)/(1-(\x)^2)},{1.*(-2)*(\x)/(1-(\x)^2)});
\draw [line width=1.pt,gray] (1.4142135623730951,0.) circle (2.cm);
\draw [line width=1.pt,color=blue] (0.6271593105450949,1.8386260100084157)-- (1.3663659234672714,-1.9994275689434555);
\draw [line width=1.pt,color=blue] (1.3663659234672714,-1.9994275689434555)-- (-0.5793116716392714,0.16080155893504008);
\draw [line width=1.pt,color=blue] (-0.5793116716392714,0.16080155893504008)-- (0.6271593105450949,1.8386260100084157);
\draw [line width=1.pt,dash pattern=on 2pt off 2pt] (-1.4142135623730951,0.)-- (0.6271593105450949,1.8386260100084157);
\draw [line width=1.pt,dash pattern=on 2pt off 2pt] (-1.4142135623730951,0.)-- (1.3663659234672714,-1.9994275689434555);
\draw [line width=1.pt,dash pattern=on 2pt off 2pt] (-1.4142135623730951,0.)-- (-0.5793116716392714,0.16080155893504008);
\draw [line width=1.pt,domain=-2:6.551143823081877] plot(\x,{(-0.-0.*\x)/2.8284271247461903});
\draw [line width=1.pt,color=brown] (3.2,1.4)-- (1.3663659234672714,-1.9994275689434555);
\draw [line width=1.pt,color=brown] (3.2,1.4)-- (-0.5793116716392714,0.16080155893504008);
\draw [line width=1.pt,color=brown] (3.2,1.4)-- (0.6271593105450949,1.8386260100084157);
\draw [line width=1.pt,color=green] (0.,0.) circle (1.cm);
\begin{scriptsize}
\draw [fill=wewdxt] (-1.4142135623730951,0.) circle (1.3pt);
\draw[color=black] (-1.5,0.20015494778601045) node {$F_-$};
\draw [fill=wewdxt] (1.4142135623730951,0.) circle (1.3pt);
\draw[color=black] (1.48,0.20015494778601045) node {$F_+$};
\draw[color=red] (2.55,2.05) node {$\mathcal{D}$};
\draw[color=black] (0.08383863348606341,1.8023398686237817) node {$\mathcal{C}$};
\draw [fill=wewdxt] (0.6271593105450949,1.8386260100084157) circle (1.3pt);
\draw[color=black] (0.602561688796465,2.019868891818466) node {$A$};
\draw [fill=wewdxt] (1.3663659234672714,-1.9994275689434555) circle (1.3pt);
\draw[color=black] (1.3639132699778607,-2.2) node {$C$};
\draw [fill=wewdxt] (-0.5793116716392714,0.16080155893504008) circle (1.3pt);
\draw[color=black] (-0.75,0.2629037044767848) node {$B$};
\draw [fill=wewdxt] (0.,0.) circle (1.3pt);
\draw[color=black] (0.0,0.18) node {$N$};
% \draw [fill=wewdxt] (0.7866247084227387,0.) circle (1.3pt);
% \draw[color=black] (0.75,-0.23) node {$P_\lambda$};
\draw [fill=wewdxt] (3.2,1.4) circle (1.3pt);
\draw[color=black] (3.35,1.5) node {$P$};
\draw [fill=gray] (0.9967626170061832,-0.08040077946751989) circle (1.3pt);
\draw [fill=gray] (0.39352712591399996,-0.9193130050042078) circle (1.3pt);
\draw [fill=gray] (0.02392381945291172,0.9997137844717279) circle (1.3pt);
% \draw [fill=wewdxt] (0.47140452079103173,0.) circle (1.3pt);
% \draw[color=black] (0.45,0.22) node {$G$};
\end{scriptsize}
\end{tikzpicture}
    \caption{$\mathcal{D}$ is a hyperbola.}
    \label{fig:sqsumfoccen(B)}
    \end{subfigure}
\caption{Illustration of Theorem~\ref{thm:sqsumfoccen} and Corollary~\ref{cor:sqsumindepP}. Shown in green is the pedal curve of the inellipse $\mathcal D$ with respect to its foci. This curve coincides with the locus of the midpoints of the sides of the Poncelet triangles inscribed in $\mathcal C$ and circumscribed about $\mathcal D$.}
    \label{fig:sqsumfoccen}
\end{figure}

Theorems~\ref{thm:sqsumconcen}--\ref{thm:sqsumfoccen} naturally raise the question of whether the hypotheses imposed therein are not only sufficient but also necessary for the invariance of the quantity
\[
|AB|^2 + |BC|^2 + |CA|^2
\]
throughout a $3$-Poncelet family. The following theorem answers this question completely.

\begin{theorem}\label{thm:sqsumindep} 
Let $\mathcal{P}$ be a family of triangles inscribed in a circle $\mathcal C$ circumscribed about a central conic $\mathcal{D}$. Then the sum of the areas of the squares constructed on the sides of a triangle in $\mathcal P$ remains invariant throughout the family if and only if the center of $\mathcal{C}$ coincides either with the center of $\mathcal D$ or with one of its foci. (See Figure \ref{fig:samearea}.) 
\end{theorem}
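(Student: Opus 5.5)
The plan is to reduce everything to the orthocenter locus of Theorem~\ref{thm:orthcirc} and the identity of Corollary~\ref{cor:sqsumlength}. For every triangle in $\mathcal P$ the circumcenter $O$ and circumradius $R$ are fixed, so by \eqref{eq:sqsumlength}
\[
|AB|^2+|BC|^2+|CA|^2=9R^2-|OH|^2 .
\]
Hence the sum of squares is invariant throughout $\mathcal P$ if and only if $|OH|$ is constant as $H$ runs over the orthocenters of the family. By Theorem~\ref{thm:orthcirc}, these orthocenters lie on the circle $\Gamma$ with center $\hat O=2F-O$ and radius $\hat R=d_+d_-/R$, where $F$ is the center of $\mathcal D$.

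Sufficiency is already covered by Theorems~\ref{thm:sqsumconcen} and \ref{thm:sqsumfoccen}. If $O=F$, then $\hat O=O$, so $|OH|=\hat R=c^2/R$ is constant. If $O$ is a focus, then $\hat R=0$, and Corollary~\ref{cor:ohcoin} gives a fixed $H$ equal to the other focus.

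For necessity, assume $|OH|$ is constant on the family and that $O\neq F$ and $O$ is not a focus. Then $\hat R>0$ and $\hat O\neq O$. A circle of positive radius whose center is different from $O$ meets each circle centered at $O$ in at most two points. Therefore, if $|OH|$ takes a single value, the orthocenters form a set of at most two points. The main step is to rule this out by showing that the map $A\mapsto H(A)$ takes infinitely many values as $A$ ranges over the admissible points of $\mathcal C$.

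I would argue as follows. The remark after Lemma~\ref{lemm:3pons} shows that every admissible $A\in\mathcal C$ (all but finitely many points of an arc) produces a Poncelet triangle. Its orthocenter is
\[
H=A+B+C-2O,
\]
and by Lemma~\ref{lemm:tangent} it is a real-analytic (in fact algebraic) function of $A$ on that arc. If $H$ took only finitely many values, continuity would make it constant on the arc, and then, by algebraicity, on all of $\mathcal C$. Fix such an $H_0$. For any triangle of the family, $H_0$ is its orthocenter, so the altitude from $A$ passes through $H_0$ and $A-H_0\perp B-C$. The Poncelet map sends the triangle $ABC$ to itself with vertices permuted, so the relation is symmetric in the vertices.

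To get a contradiction I would use the explicit locus: the computation in the proof of Theorem~\ref{thm:orthcirc} gives $H$ as an explicit parametrization of $\Gamma$. The cleanest route is to check that this parametrization is non-constant whenever $\hat R>0$. One convenient test is the limit as $A$ approaches a contact point of a common tangent of $\mathcal C$ and $\mathcal D$, where $f_1=0$ in Lemma~\ref{lemm:tangent}(c). Another is to exhibit two triangles of the family, for instance those symmetric with respect to the $x$-axis when $y_O=0$, whose orthocenters differ.

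Alternatively, one can avoid special positions. The Poncelet correspondence is a degree-one dynamics on the cycle of triangles, and each triangle appears exactly three times, once for each vertex $A$. The $1$-parameter family is therefore genuinely infinite, and a constant $H$ together with a constant $O$ would fix the Euler line. Reflecting $H_0$ in the sides of the triangle gives points on $\mathcal C$. This forces each side line to be tangent to the conic with foci $O,H_0$ and major axis $R$ (the classical fact that $O$ and $H$ are isogonal conjugates, i.e.\ the foci of an inscribed conic). Since a triangle has a unique inconic with given foci, that inconic must be $\mathcal D$, so $\{O,H_0\}=\{F_+,F_-\}$, contradicting the assumption that $O$ is not a focus.

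This isogonal-conjugate argument is the step I expect to need the most care. In particular one must check that $\mathcal D$ is indeed the inconic with foci $O,H_0$, and handle the hyperbolic case, where the inconic may be tangent to the extended sides. I would present it as the main line of the proof and use the explicit-formula check as a backup.
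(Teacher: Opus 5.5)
Your reduction is the paper's. By Corollary~\ref{cor:sqsumlength}, invariance of the sum is invariance of $|OH|$, and sufficiency comes from Theorems~\ref{thm:sqsumconcen}--\ref{thm:sqsumfoccen}. For necessity the paper simply cites Corollaries~\ref{cor:ohcoin}--\ref{cor:ohindep}. Their proofs tacitly assume that when $\hat R>0$ the orthocenters actually sweep out infinitely many points of the circle $\Gamma$ of Theorem~\ref{thm:orthcirc}; that theorem only shows $H\in\Gamma$. You correctly isolate this as the real content of the ``only if'' direction. The MacBeath-inconic idea is a good, computation-free way to supply it, and it works uniformly for ellipses and hyperbolas.

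As written, however, your key inference fails. The sentence ``a triangle has a unique inconic with given foci, so that inconic must be $\mathcal D$'' says nothing about the foci of $\mathcal D$, which is a priori an inconic with \emph{different} foci. The uniqueness you need is the dual statement: two distinct nondegenerate conics have at most four common tangent lines.

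Here is the repair. Let $\mathcal M$ be the conic with foci $O,H_0$ and (major or transverse) axis $R$; it depends only on fixed data. By your reflection argument, the reflection of $H_0$ in a side line lies on $\mathcal C$, hence at distance $R$ from $O$. So every side line of every triangle of $\mathcal P$ with orthocenter $H_0$ is tangent to $\mathcal M$. Two distinct triangles of $\mathcal P$ share no side line, because a common side gives a common vertex and a vertex determines the Poncelet triangle. Hence two such triangles already give six common tangents of $\mathcal M$ and $\mathcal D$. This forces $\mathcal M=\mathcal D$ and $\{F_+,F_-\}=\{O,H_0\}$, a contradiction.

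This also makes your analyticity step unnecessary. If $H$ takes at most two values on the infinite family, pigeonhole gives some $H_0$ attained by infinitely many triangles. You then only need to dispose of two degenerate cases. If $|OH_0|=R$, the triangles are right-angled with the right angle at $H_0\in\mathcal C$, which happens for at most one triangle of $\mathcal P$. If $H_0=O$, then $\mathcal M$ is the circle of radius $R/2$ about $O$, so $\mathcal D=\mathcal M$ gives $F=O$, again a contradiction.
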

\begin{proof} The ``if'' direction follows from Theorems \ref{thm:sqsumconcen}--\ref{thm:sqsumfoccen}.

To prove the ``only if'' direction, we assume that the sum $|AB|^2+|BC|^2+|CA|^2$ is independent of the choice of the triangle in the family $\mathcal{P}$. Equivalently, the distance $|OH|$ is independent of the choice of the triangle in the $\mathcal{P}$ (Corollary \ref{cor:sqsumlength}). Now Corollaries \ref{cor:ohcoin}--\ref{cor:ohindep} finish the proof.
\end{proof}

\input{Figures/fig.samearea}

\begin{remark}
The quantity
\[
\frac{\pi}{4}(|AB|^2+|BC|^2+|CA|^2)
\]
is the sum of the areas of the circles each centered at the midpoint and passing through the vertices of a side. Therefore, Theorem \ref{thm:sqsumindep} admits an equivalent geometric interpretation: 
\begin{center}
    {\it the total area of the three circles each centered at the midpoint and passing through the vertices of the corresponding side of a Poncelet triangle remains invariant throughout the associated family if and only if the circumcenter of the triangle coincides either with the center of the conic or with one of its foci}.
\end{center}
\end{remark}

\begin{corollary}\label{cor:sqsumindepP}
Let $\mathcal{P}$ be a family of triangles inscribed in a circle $\mathcal C$ and circumscribed about a central conic $\mathcal{D}$. Suppose that $\triangle ABC \in \mathcal{P}$. Then, for any fixed point $P$ in the plane, the sum of the areas of the circles each centered at $P$ and passing through a vertex of $\triangle ABC$ remains invariant throughout the family if and only if the center of $\mathcal C$ coincides with one of the foci of $\mathcal{D}$.
\end{corollary}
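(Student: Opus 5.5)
The sum of the areas of the three circles centred at $P$ and passing through $A,B,C$ is $\pi\bigl(|AP|^2+|BP|^2+|CP|^2\bigr)$. So it suffices to decide when $|AP|^2+|BP|^2+|CP|^2$ is constant along $\mathcal P$. By Theorem~\ref{thm:sqsumlengthP} this equals $|AG|^2+|BG|^2+|CG|^2+3|PG|^2$, where $G$ is the centroid. By \eqref{eq:sumAG}, \eqref{eq:AGrelation} and $|OH|=3|OG|$, the first three terms equal $3R^2-3|OG|^2=3R^2-\tfrac13|OH|^2$. Writing $G=O+\tfrac13(H-O)$ and expanding $|PG|^2$, we get
\[
|AP|^2+|BP|^2+|CP|^2=3R^2+3|PO|^2+2\,(O-P)\cdot(H-O).
\]
The quadratic terms in $H$ cancel. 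The same identity follows directly from $A+B+C=O+2\cdot 0+\dots$, i.e.\ $A+B+C-3O=H-O$: expanding $\sum|A-P|^2=\sum|A-O|^2+3|O-P|^2+2(O-P)\cdot\sum(A-O)$ gives it immediately. This is the form I would actually use.

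The key step is therefore that $(O-P)\cdot(H-O)$ is constant along the family. By Theorem~\ref{thm:orthcirc}, $H$ ranges over the circle centred at $\hat O=2F-O$ with radius $\hat R=d_+d_-/R$, where $F$ is the centre of $\mathcal D$.

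\emph{If direction.} If $O$ is a focus, then $\hat R=0$ and $H$ is fixed (Corollary~\ref{cor:ohcoin}). The expression is then constant for every $P$, which gives sufficiency.

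\emph{Only if direction.} Suppose $\hat R>0$. I need that $H$ actually traverses a nontrivial arc of that circle as the triangle varies, rather than staying at one point. Continuity of the vertex construction in Lemma~\ref{lemm:tangent} as $A$ moves on $\mathcal C$ shows that $H$ moves continuously. Moreover, $H$ cannot be locally constant, because the explicit parametrisation in the proof of Theorem~\ref{thm:orthcirc} is a nonconstant rational function of $A$; alternatively, apply Poncelet invariance under rotation of the starting vertex. A linear functional $v\cdot H$ with $v\neq 0$ is then nonconstant on any arc of a circle of positive radius. Taking $v=O-P$, constancy forces $P=O$ whenever $\hat R>0$.

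The main obstacle is the phrase ``for any fixed point $P$''. With $P=O$ the sum is always $3R^2$, so it is trivially invariant for every family. The statement must therefore be read as ``for every $P$'', and I would prove it in that form: the sum is invariant for all $P$ if and only if $H$ is fixed, if and only if $\hat R=0$, if and only if $O$ is a focus. I would also remark that for $P\neq O$ the same argument gives the result for each individual such $P$. The remaining nontrivial point is justifying that $H$ genuinely moves when $\hat R>0$. I would settle it by exhibiting two triangles in the family, for example those starting from two distinct admissible vertices $A$, and checking that the orthocentre formula from Theorem~\ref{thm:orthcirc} gives distinct points.
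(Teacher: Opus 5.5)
Your proof is correct. It is more careful than the paper's, which simply cites Theorem~\ref{thm:sqsumlengthP} and Corollaries~\ref{cor:ohcoin}--\ref{cor:ohindep}.

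\textbf{How the routes differ.} The paper's implied argument writes $|AP|^2+|BP|^2+|CP|^2=\tfrac13\left(|AB|^2+|BC|^2+|CA|^2\right)+3|PG|^2$ and decides the invariance of the two pieces separately. The side sum is invariant iff $O$ is the center or a focus. $G$ is fixed iff $O$ is a focus, while in the concentric case $G$ runs on a circle about $O$. This piecewise reasoning never excludes cancellation between two varying terms when $O$ is neither center nor focus, and such cancellation does occur at $P=O$.

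Your expansion about $O$, via $H-O=(A-O)+(B-O)+(C-O)$, gives $3R^2+3|OP|^2+2(O-P)\cdot(H-O)$, which is affine in $H$. This buys three things:
\begin{itemize}
\item it treats all configurations at once;
\item it makes Corollary~\ref{cor:ohindep} unnecessary;
\item it isolates $P=O$ as the only exceptional point.
\end{itemize}
So you are right that the corollary holds only when read as ``for every $P$'', or for a given $P\neq O$. Read pointwise, its ``only if'' part fails at $P=O$, for instance in the concentric family of a non-circular ellipse. The paper's proof overlooks this.

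\textbf{The step to tighten: the motion of $H$.} The linear-functional step needs $H$ to sweep a nondegenerate arc, not merely to take two values, since $v\cdot H_1=v\cdot H_2$ whenever $v\perp H_1-H_2$. Checking two explicit triangles cannot settle this for a general pair. Instead:
\begin{itemize}
\item take non-constancy from Corollary~\ref{cor:ohcoin}, as the paper does;
\item upgrade it to an arc: $B+C$ is symmetric in the two tangent slopes, so by \eqref{eq:vietem} $H$ is a rational function of $A\in\mathcal C$, and a non-constant rational function is constant on no arc.
\end{itemize}
When $\mathcal D$ is an ellipse inside $\mathcal C$, Section~\ref{sec:4} makes this explicit. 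With $O=0$ and $R=1$ one has $H=s_1=a_1+a_2+\lambda\,\overline{a_1}\,\overline{a_2}$, which traverses the whole circle of radius $|a_1a_2|=\hat R$ as $\lambda$ runs over $\mathbb T$.
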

\begin{proof}
It suffices to show that the sum
\[
|AP|^2+|BP|^2+|CP|^2
\]
is independent of the choice of the triangle $\triangle ABC$ in $\mathcal{P}$.

This follows from Corollaries \ref{cor:ohcoin}--\ref{cor:ohindep} and Theorem \ref{thm:sqsumlengthP}.
\end{proof}

% \clearpage

\section{Area of Poncelet Triangles}\label{sec:4}
In this section, we study a natural problem to find the family of triangles inscribed in a circle $\mathcal C$ and circumscribed about a central conic $\mathcal{D}$ such that all the triangles in the family have equal area.  

It follows from Theorem \ref{thm:confocicrit} (see Figure \ref{fig:confocicrit}) that three essentially different configurations may occur: 
\begin{itemize}
    \item $\mathcal D$ may be an ellipse completely contained inside $\mathcal C$. 
    \item $\mathcal D$ may be an ellipse intersecting $\mathcal C$, with both foci lying outside $\mathcal C$. 
    \item  $\mathcal D$ may be a hyperbola intersecting $\mathcal C$, with one focus inside and the other outside $\mathcal C$. 
\end{itemize}

The latter two cases necessarily contain degenerate members in the associated Poncelet family: as the triangle varies continuously, one eventually obtains a degenerate triangle collapsing to a line segment, and hence having zero area. Therefore, the area function cannot be constant in these configurations. Consequently, the only possible case in which the area may remain invariant is when $\mathcal D$ is an ellipse entirely contained inside $\mathcal C$.

To study this situation, we employ the correspondence between degree-three Blaschke products and Poncelet families of triangles. Using this approach, we prove that the area of the associated triangles is constant if and only if both foci of $\mathcal D$ coincide with the center of $\mathcal C$, that is, if and only if $(\mathcal C, \mathcal D)$ is a 3-Poncelet pair of concentric circles. 

Moreover, as will be shown in Section~\ref{sec:affinetrans} via affine transformations that a pair of homothetic ellipses, including the case of concentric circles, is the unique central $3$-Poncelet pair with invariant triangle area.

\subsection{Blaschke Product of Degree 3}\label{sec:Blaschke3}
Our approach is based on interpreting Poncelet triangles via Blaschke products of degree three. A connection between finite Blaschke products and ellipses was systematically developed in \cite{daepp2002ellipses} (see also \cite{DaeppGorkinshaffervoss2018}).

Let $\mathbb{D}$ denote a unit disk in $\mathbb{C}$ and $\mathbb{T}=\partial \mathbb{D}$ denote the unit circle. A \textit{Blaschke product of degree 3} is defined as
\begin{equation}\label{eq:blaschkedef}
    B(z)=z \frac{z-a_1}{1-\overline{a_1}z} \frac{z-a_2}{1-\overline{a_2}z},
\qquad a_1,a_2\in \mathbb{D}.
\end{equation}
For each $\lambda\in \mathbb{T}$, the three zeros, $z_1,z_2,z_3$ of $B(z)=\lambda$ determine a triangle inscribed in $\mathbb{T}$. A well-known result asserts that the sides of these triangles are tangent to a fixed ellipse whose foci are  $a_1$ and $a_2$. Thus, Blaschke products of degree three provide a natural analytic parametrization of Poncelet families of triangles associated with a circumcircle and an inner ellipse. This framework allows us to translate geometric invariance questions into algebraic conditions on the parameters of the Blaschke product.

Using $B(z)=\lambda$ in \eqref{eq:blaschkedef}, one can write
\begin{subequations}\label{eq:symmpoly}
    \begin{align}
        s_1:=z_1+z_2+z_3 &= a_1+a_2+\lambda \overline{a_1}\,\overline{a_2}, \label{eq:s1} \\
        s_2:=z_1z_2+z_2z_3+z_3z_1 &= a_1a_2+\lambda (\overline{a_1}+\overline{a_2}), \label{eq:s2} \\
        s_3:=z_1z_2z_3 &= \lambda. \label{eq:s3}
    \end{align}
\end{subequations}

For our discussion, we may assume that $a_1, a_2 \in \mathbb{R}$ without losing any generality.

\subsection{Invariance of Areas}

For vertices $z_1,z_2,z_3\in\partial\mathbb D$, the area of the triangle
$\triangle z_1z_2z_3$ is given by
\begin{equation}\label{eq:areaformulacomp}
    \operatorname{Area}(\triangle z_1z_2z_3)
    =
    \frac14 |z_1-z_2|\,|z_2-z_3|\,|z_3-z_1|.
\end{equation}
To study the variation of the area along the Poncelet family, we use the identity
\[
(z_1-z_2)^2(z_2-z_3)^2(z_3-z_1)^2
=
s_1^2s_2^2-4s_2^3-4s_1^3s_3-27s_3^2+18s_1s_2s_3,
\]
where $s_1,s_2,s_3$ denote the elementary symmetric polynomials in
$z_1,z_2,z_3$ (see \eqref{eq:symmpoly}). The right-hand side is precisely the discriminant of the cubic polynomial having zeros $z_1,z_2$, and $z_3$.

Substituting the expressions for $s_1,s_2$, and $s_3$ in terms of the foci and the parameter $\lambda$ into \eqref{eq:areaformulacomp}, we obtain
\[
\operatorname{Area}(\triangle z_1z_2z_3)
=
\frac14
\sqrt{
\left|
C_0\lambda^4+C_1\lambda^3+C_2\lambda^2+C_1\lambda+C_0
\right|
},
\]
where
\begin{align*}
    C_0 &= a_1^2 a_2^2 (a_1-a_2)^2,\\
    C_1 &= 2(a_1+a_2)(a_1a_2^2+a_1-2a_2)
          (a_1^2a_2-2a_1+a_2),\\
    C_2 &= a_1^4a_2^4+4a_1^2a_2^4+8a_1^3a_2^3+4a_1^4a_2^2+a_1^4+a_2^4\\
        &\quad -20a_1a_2^3-24a_1^2a_2^2-20a_1^3a_2
        +18a_1^2+18a_2^2+36a_1a_2-27.
\end{align*}

Since $|\lambda|=1$, the area is independent of $\lambda$ if and only if the self-reciprocal polynomial
\[
P(\lambda)
=
C_0\lambda^4+C_1\lambda^3+C_2\lambda^2+C_1\lambda+C_0
\]
has constant modulus on $\mathbb T$. This is possible only when all coefficients except one vanish. Since $P$ is palindromic, the only admissible form is
\[
P(\lambda)=C_2\lambda^2.
\]
Hence
\[
C_0=C_1=0.
\]
From $C_0=0$ we obtain
\[
a_1=0,\qquad a_2=0,\qquad\text{or}\qquad a_1=a_2.
\]
Substituting $a_1=0$ into $C_1=0$ yields
\[
-4a_2^3=0,
\]
and therefore $a_2=0$. By symmetry, $a_2=0$ implies $a_1=0$.

Finally, substituting $a_2=a_1$ into $C_1=0$ gives
\[
4a_1^3(a_1+1)^2(a_1-1)^2=0.
\]
Thus either $a_1=0$ or $a_1=\pm1$. The latter is equivalent to $a_1\in\mathbb T$, and hence, by Corollary~\ref{cor:nondenconic}, the corresponding conic is degenerate. Since we are considering a $3$-Poncelet pair, this case is impossible. Therefore, $a_1=0$.

Consequently,
\[
a_1=a_2=0.
\]
Hence the conic pair must be centrally symmetric. In this case,
\[
C_2=-27,
\]
and therefore
\[
\operatorname{Area}(\triangle z_1z_2z_3)
=
\frac14\sqrt{|-27\lambda^2|}
=
\frac{3\sqrt3}{4},
\]
recovering the classical area of an equilateral triangle inscribed in the unit circle.

We have therefore proved the following theorem.

\begin{theorem}\label{thm:invareapons}
Let $\mathcal P$ be a family of triangles inscribed in a circle $\mathcal C$ and circumscribed about a central conic $\mathcal D$. Then the area of a triangle in $\mathcal P$ remains invariant throughout the family if and only if $\mathcal C$ and $\mathcal D$ are concentric circles.
\end{theorem}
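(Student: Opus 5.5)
The argument assembles the discussion preceding the statement into a proof with three parts: reduction to an inner ellipse, normalization to the Blaschke setting, and the coefficient analysis.

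\emph{Reduction to an inner ellipse.} First I rule out every configuration except an ellipse lying strictly inside $\mathcal C$. By Theorem~\ref{thm:confocicrit}, the conic $\mathcal D$ is either
\begin{itemize}
\item[(i)] a hyperbola with one focus inside $\mathcal C$ and one outside, or
\item[(ii)] an ellipse with both foci outside $\mathcal C$ (which then meets $\mathcal C$), or
\item[(iii)] an ellipse with both foci inside $\mathcal C$.
\end{itemize}
In cases (i) and (ii), $\mathcal D$ meets $\mathcal C$. Starting from a vertex $A$ and letting it travel continuously along $\mathcal C$ toward a point of $\mathcal C\cap\mathcal D$, one tangent from $A$ becomes tangent to $\mathcal C$. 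This is the degeneration detected by $f_1=0$ or $S_{AA}=0$ in Lemma~\ref{lemm:sBBsCC}. There two vertices merge, so the area tends to $0$. Since the area is positive on nondegenerate members, it cannot be constant.

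In case (iii), I still need to check that the whole ellipse lies inside $\mathcal C$. Otherwise the same degeneration argument applies again.

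\emph{Normalization.} Next I normalize by a similarity: $\mathcal C=\mathbb T$, and the foci $a_1,a_2$ lie on the real axis. This is harmless, since a similarity scales areas by a constant factor and preserves the property of being concentric circles. By Corollary~\ref{cor:nondenconic}, there is a unique inscribed ellipse with these foci. The Blaschke product \eqref{eq:blaschkedef} with zeros $0,a_1,a_2$ produces a Poncelet family for an ellipse with the same foci, so by uniqueness it is our family. Every triangle of the family is then $\{z:B(z)=\lambda\}$ for some $\lambda\in\mathbb T$.

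\emph{Area formula and coefficient analysis.} Using \eqref{eq:areaformulacomp} and the discriminant identity with \eqref{eq:symmpoly}, $16\,\mathrm{Area}^2=|P(\lambda)|$, where $P$ is the palindromic quartic with real coefficients $C_0,C_1,C_2$. The key step, which I expect to be the main obstacle to state cleanly, is that $|P|$ is constant on $\mathbb T$ only if $C_0=C_1=0$. My approach is to write $P(\lambda)=\lambda^2Q(t)$, with $t=\lambda+\bar\lambda=2\cos\theta\in[-2,2]$ and
\[
Q(t)=C_0t^2+C_1t+C_2-2C_0 .
\]
Then $|P|=|Q(t)|$ with $Q$ a real polynomial, and $|Q|$ constant on an interval forces $Q$ to be constant. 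This gives $C_0=C_1=0$.

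The factorization then finishes the argument. From $C_0=0$ we get $a_1a_2(a_1-a_2)=0$. Feeding each case into $C_1=0$ gives $a_1=a_2=0$, except for $a_1=a_2=\pm1$, which is excluded because the foci would lie on $\mathcal C$ (Corollary~\ref{cor:nondenconic}(a)). So both foci coincide with the center of $\mathcal C$, and $\mathcal D$ is a concentric circle.

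Conversely, for concentric circles the family consists of rotations of a single triangle, equilateral of area $3\sqrt3/4$ in the unit case (equivalently $C_2=-27$). Its area is therefore invariant.
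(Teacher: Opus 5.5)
Your proposal is correct and follows the paper's proof step for step: the reduction via Theorem~\ref{thm:confocicrit} to an ellipse inside $\mathcal C$, the degree-three Blaschke parametrization with real foci, $16\,\mathrm{Area}^2=|P(\lambda)|$, the forcing of $C_0=C_1=0$, and the factorization leading to $a_1=a_2=0$. Your substitution $t=\lambda+\bar\lambda$ is a clean proof of a step the paper only asserts (that constant modulus on $\mathbb T$ forces $P=C_2\lambda^2$), and your appeal to uniqueness in Corollary~\ref{cor:nondenconic} makes explicit why the given family is the Blaschke family; one small slip is that at a point of $\mathcal C\cap\mathcal D$ the two tangents from $A$ coalesce ($S_{AA}=0$, so $B=C$) rather than a tangent becoming tangent to $\mathcal C$, though your conclusion that the area tends to $0$ still holds.
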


\section{Affine Transformations and 3-Poncelet Pairs}\label{sec:affinetrans}
\subsection{3-Poncelet pairs of an ellipse and a central conic}
In this section, we extend the generalized Chapple--Euler relation to the setting where the outer conic is an ellipse and the inner conic is a central conic. Using affine transformations, the problem is reduced to the circular case treated in Theorem~\ref{thm:genchappleeuler}. This framework allows us to translate geometric properties of the original Poncelet pair into corresponding properties of its affine image.
\begin{theorem}[Generalized Chapple--Euler Formula II]\label{thm:genchappleeuler2}
Let $\mathcal{D}_1$ be an ellipse and let $\mathcal{D}_2$ be a central conic given by 
\begin{equation*}
\mathcal{D}_1:\frac{(x-x_O)^2}{\alpha_1^2}+\frac{(y-y_O)^2}{\beta_1^2}=1,\quad
\mathcal{D}_2:\frac{x^2}{\alpha_2^2}+\varepsilon\frac{y^2}{\beta_2^2}=1.
\end{equation*}
Then $(\mathcal{D}_1,\mathcal{D}_2)$ is a $3$-Poncelet pair if and only if
\begin{equation}\label{eq:genchappleeuler2}
\left(1-d_{+}^2\right)\left(1-d_{-}^2\right)=4\varepsilon\,\frac{\beta_2^2}{\beta_1^2},
\end{equation}
where 
\begin{equation}\label{eq:d_pm}
    d_\pm^2=\left(\frac{x_O}{\alpha_1}\mp \sqrt{\frac{\alpha_2^2}{\alpha_1^2}-\varepsilon\frac{\beta_2^2}{\beta_1^2}}\right)^2+\left(\frac{y_O}{\beta_1}\right)^2,
\end{equation}
and $\varepsilon =1$ if $\mathcal{D}_2$ is an ellipse and  $\varepsilon =-1$ if $\mathcal{D}_2$ is a hyperbola.
\end{theorem}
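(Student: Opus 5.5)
The plan is to apply the affine map $T(x,y)=(x/\alpha_1,\,y/\beta_1)$ and reduce everything to Theorem~\ref{thm:genchappleeuler}. An affine transformation sends lines to lines, preserves incidence and tangency, and sends smooth conics to smooth conics of the same affine type. Hence $(\mathcal D_1,\mathcal D_2)$ is a $3$-Poncelet pair if and only if $(T\mathcal D_1,T\mathcal D_2)$ is one, since a triangle inscribed in $\mathcal D_1$ and circumscribed about $\mathcal D_2$ is carried to a triangle inscribed in $T\mathcal D_1$ and circumscribed about $T\mathcal D_2$, and conversely via $T^{-1}$.

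Under $T$, the ellipse $\mathcal D_1$ becomes the unit circle centered at $T(O)=(x_O/\alpha_1,\,y_O/\beta_1)$. The conic $\mathcal D_2$ becomes
\[
\frac{x^2}{\alpha_2^2/\alpha_1^2}+\varepsilon\frac{y^2}{\beta_2^2/\beta_1^2}=1 .
\]
This is still a central conic in the standard form \eqref{eq:conicD}, with
\[
a=\frac{\alpha_2^2}{\alpha_1^2},\qquad b=\varepsilon\frac{\beta_2^2}{\beta_1^2},
\]
and of the same type (ellipse or hyperbola), so $\varepsilon$ is unchanged. Its foci are $(\pm\sqrt{a-b},0)$ with
\[
a-b=\frac{\alpha_2^2}{\alpha_1^2}-\varepsilon\frac{\beta_2^2}{\beta_1^2}.
\]
The squared distances from the new circle center to these foci are exactly $d_\pm^2$ in \eqref{eq:d_pm}. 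Its semi-minor axis squared is $|b|=\beta_2^2/\beta_1^2$, so $\varepsilon\beta^2=b$.

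Theorem~\ref{thm:genchappleeuler} with $R=1$ (equivalently Lemma~\ref{lemm:3pons}) then gives the criterion $(1-d_+^2)(1-d_-^2)=4\varepsilon\beta_2^2/\beta_1^2$, which is \eqref{eq:genchappleeuler2}.

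The main point needing care is the bookkeeping of which axis is ``major'' after the transformation. Theorem~\ref{thm:genchappleeuler} was stated with $\beta$ the semi-minor axis and foci on the $x$-axis, which presumes $a>b$. If the image ellipse has $a<b$, the expression $\sqrt{a-b}$ becomes imaginary. To handle this, I would argue directly from the algebraic condition \eqref{eq:3pons} of Lemma~\ref{lemm:3pons}, which holds for arbitrary $a,b$ because its derivation did not use $a>b$. I would then check that expanding $(1-d_+^2)(1-d_-^2)-4b$, with $d_\pm^2=(x_O\mp\sqrt{a-b})^2+y_O^2$ formally, reproduces \eqref{eq:3pons} identically. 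This identity is symmetric in the sign of $\sqrt{a-b}$, so it is a polynomial in $a-b$, and formula \eqref{eq:genchappleeuler2} remains valid as an algebraic identity in all cases.
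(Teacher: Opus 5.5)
Your proposal is correct and is essentially the paper's proof: the paper also applies $T(\mathbf{x})=Q\mathbf{x}$ with $Q=\operatorname{diag}(\alpha_1^{-1},\beta_1^{-1})$, uses that affine maps preserve the Poncelet property, and reads off \eqref{eq:genchappleeuler2} from Theorem~\ref{thm:genchappleeuler} with $R=1$, $a=\alpha_2^2/\alpha_1^2$, $b=\varepsilon\beta_2^2/\beta_1^2$. Your extra check is a worthwhile refinement that the paper leaves implicit. With $s=x_O^2+y_O^2-1+(a-b)$ one gets $(1-d_+^2)(1-d_-^2)-4b=s^2-4(a-b)x_O^2-4b$, which is exactly the left side of \eqref{eq:3pons}, so the criterion also holds formally when $a<b$ and $\sqrt{a-b}$ is imaginary.
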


\begin{proof}
Consider the affine transformation
\begin{equation}\label{eq:affinet}
    T(\mathbf{x})=Q\mathbf{x},\qquad \mathbf{x}\in \mathbb{R}^2
\end{equation}
where
\[
Q=\begin{pmatrix}
    \alpha_1^{-1} & 0\\
    0 & \beta_1^{-1}
\end{pmatrix}.
\]
Under this transformation, the ellipse $\mathcal{D}_1$ is mapped to the circle
\[
T(\mathcal{D}_1):\left(x-\frac{x_O}{\alpha_1}\right)^2+\left(y-\frac{y_O}{\beta_1}\right)^2=1,
\]
while $\mathcal{D}_2$ is mapped to the central conic
\[
T(\mathcal{D}_2):\frac{x^2}{\left(\frac{\alpha_2}{\alpha_1}\right)^2}
+\varepsilon\frac{y^2}{\left(\frac{\beta_2}{\beta_1}\right)^2}=1.
\]
Affine transformations preserve the Poncelet property; hence, $(\mathcal{D}_1,\mathcal{D}_2)$ is a $3$-Poncelet pair if and only if $(T(\mathcal{D}_1),T(\mathcal{D}_2))$ is a $3$-Poncelet pair.

Applying Theorem \ref{thm:genchappleeuler} to $(T(\mathcal{D}_1),T(\mathcal{D}_2))$ yields condition \eqref{eq:genchappleeuler2}. This completes the proof.
\end{proof}

\begin{figure}
  \begin{subfigure}[b]{0.45\textwidth}
    \centering
\begin{tikzpicture}[scale=1]
%[line cap=round,line join=round,>=triangle 45,x=1.0cm,y=1.0cm]
\clip(-3.5,-2) rectangle (4.5,3);
\draw [rotate around={0.:(0.,0.)},line width=1.pt,red] (0.,0.) ellipse (1.5cm and 1.cm);
\draw [rotate around={0.:(0.5,0.5)},line width=1.pt] (0.5,0.5) ellipse (3.6226cm and 2.cm);
\draw [line width=1.pt,color=blue] (0.9268855338195401,2.486065297776121)-- (-2.5396805167517864,-0.5879911329266132);
\draw [line width=1.pt,color=blue] (-2.5396805167517864,-0.5879911329266132)-- (1.8420228245642383,-1.3576980679404604);
\draw [line width=1.pt,color=blue] (1.8420228245642383,-1.3576980679404604)-- (0.9268855338195401,2.486065297776121);
\begin{scriptsize}
\draw [fill=wewdxt] (0.5,0.5) circle (1.3pt);
\draw[color=wewdxt] (0.5806017487549502,0.6929320970431493) node {$O$};
\draw[color=red] (-0.15,1.2) node {$\mathcal D_2$};
\draw[color=black] (-1.36,2.45) node {$\mathcal D_1$};
\draw [fill=wewdxt] (0.9268855338195401,2.486065297776121) circle (1.3pt);
\draw[color=wewdxt] (1.01,2.68) node {$A$};
\draw [fill=wewdxt] (-2.5396805167517864,-0.5879911329266132) circle (1.3pt);
\draw[color=wewdxt] (-2.75,-0.7) node {$B$};
\draw [fill=wewdxt] (1.8420228245642383,-1.3576980679404604) circle (1.3pt);
\draw[color=wewdxt] (1.93,-1.55) node {$C$};
\end{scriptsize}
\end{tikzpicture}
    \caption{$\mathcal{D}_2$ is an ellipse entirely inside $\mathcal{D}_1$.}
    \label{fig:ellell3pons(A)}
\end{subfigure}
  \begin{subfigure}[b]{0.45\textwidth}
    \centering
\begin{tikzpicture}[scale=1.5]
%[line cap=round,line join=round,>=triangle 45,x=1.0cm,y=1.0cm]
\clip(-2.,-2) rectangle (2,3);
\draw [rotate around={0.:(0.,0.)},line width=1.pt,red] (0.,0.) ellipse (1.5cm and 1.cm);
\draw [rotate around={90.:(0.5,0.5)},line width=1.pt] (0.5,0.5) ellipse (2.cm and 0.86204cm);
\draw [line width=1.pt,domain=-1.5:1.5,lightgray] plot(\x,{(-1.867313630332293-0.9208592272147069*\x)/-1.256543406430998});
\draw [line width=1.pt,domain=-1:1.7,lightgray] plot(\x,{(--2.373148522996754-1.5595363698482416*\x)/0.3993596276828101});
\draw [line width=1.pt,color=blue] (0.9608380575908271,2.190222962007156)-- (-0.29570534884017086,1.2693637347924493);
\draw [line width=1.pt,color=blue] (-0.29570534884017086,1.2693637347924493)-- (1.3601976852736373,0.6306865921589144);
\draw [line width=1.pt,color=blue] (1.3601976852736373,0.6306865921589144)-- (0.9608380575908271,2.190222962007156);
\begin{scriptsize}
\draw [fill=wewdxt] (0.5,0.5) circle (0.9pt);
\draw[color=wewdxt] (0.5636012389906441,0.6486736640004113) node {$O$};
\draw[color=red] (-0.85,1.1) node {$\mathcal D_2$};
\draw[color=black] (-0.25,2) node {$\mathcal D_1$};
\draw [fill=wewdxt] (0.9608380575908271,2.190222962007156) circle (0.9pt);
\draw[color=wewdxt] (1.02,2.33) node {$A$};
\draw [fill=wewdxt] (-0.29570534884017086,1.2693637347924493) circle (0.9pt);
\draw[color=wewdxt] (-0.45,1.3) node {$B$};
\draw [fill=wewdxt] (1.3601976852736373,0.6306865921589144) circle (0.9pt);
\draw[color=wewdxt] (1.5,0.6) node {$C$};
\end{scriptsize}
\end{tikzpicture}
    \caption{$\mathcal{D}_2$ is an ellipse partially inside $\mathcal{D}_1$.}
    \label{fig:ellell3pons(B)}
\end{subfigure}
  \begin{subfigure}[b]{0.45\textwidth}
    \centering
\begin{tikzpicture}
%[line cap=round,line join=round,>=triangle 45,x=1.0cm,y=1.0cm]
\clip(-4.5,-2.5) rectangle (4.5,4.5);
\draw [samples=50,domain=-0.99:0.99,rotate around={0.:(0.,0.)},xshift=0.cm,yshift=0.cm,line width=1.pt,color=red] plot ({2.*(1+(\x)^2)/(1-(\x)^2)},{1.*2*(\x)/(1-(\x)^2)});
\draw [samples=50,domain=-0.99:0.99,rotate around={0.:(0.,0.)},xshift=0.cm,yshift=0.cm,line width=1.pt,color=red] plot ({2.*(-1-(\x)^2)/(1-(\x)^2)},{1.*(-2)*(\x)/(1-(\x)^2)});
\draw [rotate around={90.:(1.,1.)},line width=1.pt] (1.,1.) ellipse (2.183993892948885cm and 1.5cm);
\draw [line width=1.pt,domain=1.85:2.25,lightgray] plot(\x,{(--4.684278556359269-2.3435465470198205*\x)/-0.16240713847248633});
\draw [line width=1.pt,domain=-4:3,lightgray] plot(\x,{(-4.684278556359269-3.4732626845677395*\x)/-5.129497939416247});
\draw [line width=1.pt,domain=-4:5,lightgray] plot(\x,{(--2.275917287637032-3.5149627797632084*\x)/6.6513197090741105});
\draw [line width=1.pt,color=blue] (2.163608833737407,2.3782172303531177)-- (-0.47365622568343735,0.5924841179925797);
\draw [line width=1.pt,color=blue] (-0.47365622568343735,0.5924841179925797)-- (1.9510595856818307,-0.6888835204965149);
\draw [line width=1.pt,color=blue] (1.9510595856818307,-0.6888835204965149)-- (2.163608833737407,2.3782172303531177);
\begin{scriptsize}
\draw[color=red] (4.10,1.5) node {$\mathcal{D}_2$};
\draw[color=black] (0,3) node {$\mathcal{D}_1$};
\draw [fill=wewdxt] (2.163608833737407,2.3782172303531177) circle (1.3pt);
\draw[color=wewdxt] (2.3,2.7) node {$A$};
\draw [fill=wewdxt] (-0.47365622568343735,0.5924841179925797) circle (1.3pt);
\draw[color=wewdxt] (-0.77,0.58) node {$B$};
% \draw [fill=wewdxt] (-2.23606797749979,0.) circle (1.0pt);
% \draw[color=wewdxt] (-2.48,-0.01) node {$F_-$};
% \draw [fill=wewdxt] (2.236067977499718,0.) circle (1.0pt);
% \draw[color=wewdxt] (2.5,-0.1) node {$F_+$};
\draw [fill=wewdxt] (6.7082039324992255,0.) circle (1.0pt);
\draw [fill=wewdxt] (1.,1.) circle (1.3pt);
\draw[color=wewdxt] (1.0866773465810509,1.212820942275401) node {$O$};
\draw [fill=wewdxt] (1.9510595856818307,-0.6888835204965149) circle (1.3pt);
\draw[color=wewdxt] (2.1,-0.95) node {$C$};
\end{scriptsize}
\end{tikzpicture}
    \caption{$\mathcal{D}_2$ is a hyperbola.}
    \label{fig:ellell3pons(C)}
    \end{subfigure}
    \caption{3-Poncelet pairs $(\mathcal{D}_1,\mathcal{D}_2)$ of an ellipse $\mathcal{D}_1$ and a central conic $\mathcal{D}_2$.}
    \label{fig:ellell3pons}
\end{figure}

\begin{corollary}\label{cor:3ponsconell}
Let $\mathcal{D}_1$ and $\mathcal{D}_2$ be concentric ellipses with parallel axes and semi-axis lengths $\alpha_1,\beta_1$ and $\alpha_2,\beta_2$, respectively. Then $(\mathcal{D}_1,\mathcal{D}_2)$ forms a $3$-Poncelet pair if and only if one of the following conditions holds:
\begin{align}
    \frac{\alpha_2}{\alpha_1}+\frac{\beta_2}{\beta_1}&=1,\label{eq:alphabeta1}\\
    \frac{\alpha_2}{\alpha_1}-\frac{\beta_2}{\beta_1}&=\pm 1.  \label{eq:alphabeta2}
\end{align}
\end{corollary}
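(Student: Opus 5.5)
We specialize Theorem~\ref{thm:genchappleeuler2} to the concentric case $x_O=y_O=0$ with $\varepsilon=1$. Write $p=\alpha_2/\alpha_1$ and $q=\beta_2/\beta_1$, and set $c^2=p^2-q^2$. Then \eqref{eq:d_pm} gives $d_+^2=d_-^2=c^2$. (If $c^2<0$, the computation is formally the same, or we swap the roles of the axes; the identity below is symmetric in $p,q$ up to sign.) Condition \eqref{eq:genchappleeuler2} therefore reads
\[
(1-p^2+q^2)^2=4q^2 .
\]
Equivalently, this is the concentric case of \eqref{eq:3pons} for the affine image $T(\mathcal D_1)$ (the unit circle) and $T(\mathcal D_2)$ with $a=p^2$, $b=q^2$:
\[
1-2(a+b)+(a-b)^2=0 .
\]

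The key step is to factor this expression. Treating it as $(a-b)^2-2(a+b)+1$ with $a=p^2$ and $b=q^2$, I expect the factorization
\[
(a-b)^2-2(a+b)+1=(1-p-q)(1-p+q)(1+p-q)(1+p+q).
\]
One can check this by grouping: $(1+p+q)(1-p-q)=1-(p+q)^2$ and $(1-p+q)(1+p-q)=1-(p-q)^2$. Their product is $1-2(p^2+q^2)+(p^2-q^2)^2$, which matches. Alternatively, one can take square roots in $(1-p^2+q^2)^2=4q^2$ to get $1-p^2+q^2=\pm2q$, that is, $(q\pm1)^2=p^2$.

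Since $p,q>0$, the factor $1+p+q$ never vanishes. The remaining factors vanish exactly when $p+q=1$, giving \eqref{eq:alphabeta1}, or when $p-q=\pm1$, giving \eqref{eq:alphabeta2}. Because Theorem~\ref{thm:genchappleeuler2} is an if-and-only-if statement, both directions follow at once.

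The only point needing care, and the main obstacle, is checking that the hypotheses of the earlier results hold in the degenerate configurations. When $p=q$, the image is a circle and the foci coincide, so we use the limiting case noted in Remark~\ref{rem:genchappeuler}. Case $p<q$ is handled by rotating the picture by $90^\circ$, which swaps the roles of the axes; the resulting condition is invariant under $p\leftrightarrow q$ up to the sign in \eqref{eq:alphabeta2}. As a consistency check, this recovers Corollary~\ref{cor:sumdiffsemi} when $\alpha_1=\beta_1$.
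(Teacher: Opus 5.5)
Your proof is correct and is essentially the paper's: you specialize Theorem~\ref{thm:genchappleeuler2} to $x_O=y_O=0$ and solve $(1-p^2+q^2)^2=4q^2$, and your four-factor factorization is just a repackaging of the paper's step $p=|q\pm1|$; you also take $\varepsilon=1$ directly from the hypothesis and treat the case $p<q$ (where \eqref{eq:d_pm} involves the square root of a negative number), which the paper passes over. One caveat, shared with the paper: for instance when $q=p+1$ with $p>1$, $\mathcal D_1$ lies strictly inside $\mathcal D_2$, so the equation holds but no real Poncelet triangle exists, and the ``if'' direction therefore tacitly relies on the standing assumption of Remark~\ref{rem:detD} that real tangents exist.
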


\begin{proof}
Since the conics are concentric, we have $(x_O,y_O)=(0,0)$. Substituting into \eqref{eq:d_pm}, we obtain
\[
d_+^2=d_-^2=\frac{\alpha_2^2}{\alpha_1^2}-\varepsilon\frac{\beta_2^2}{\beta_1^2}.
\]
Then condition \eqref{eq:genchappleeuler2} becomes
\[
\left(1-\frac{\alpha_2^2}{\alpha_1^2}+\varepsilon\frac{\beta_2^2}{\beta_1^2}\right)^2
=4\varepsilon\,\frac{\beta_2^2}{\beta_1^2}.
\]
This identity forces $\varepsilon=1$, so $\mathcal{D}_2$ must be an ellipse. Hence, rearranging, we obtain
\[
\frac{\alpha_2}{\alpha_1}=\left|\frac{\beta_2}{\beta_1}\pm 1\right|,
\]
which is equivalent to \eqref{eq:alphabeta1} and \eqref{eq:alphabeta2}.
\end{proof}

\begin{remark}
The above proof shows that if an ellipse $\mathcal{D}_1$ and a central conic $\mathcal{D}_2$ are concentric and have parallel axes, then they form a $3$-Poncelet pair if and only if $\mathcal{D}_2$ is also an ellipse. In particular, no hyperbola concentric with $\mathcal{D}_1$ can participate in such a Poncelet pair.

This conclusion is consistent with Corollary \ref{cor:circentell}. Thus, in both the circle--conic and ellipse--conic settings, the concentric configuration necessarily excludes hyperbolas.
\end{remark}

\begin{corollary}\label{cor:unique_homothetic}
Let $(\mathcal{D}_1,\mathcal{D}_2)$ be a $3$-Poncelet pair of concentric ellipses with semi-axis lengths $\alpha_1,\beta_1$ and $\alpha_2,\beta_2$, respectively. If $(\mathcal{D}_1,\mathcal{D}_2)$ is affinely equivalent to a pair of concentric circles, then
\[
\frac{\alpha_2}{\alpha_1}
=
\frac{\beta_2}{\beta_1}
=
\frac12.
\]
In particular, among all concentric $3$-Poncelet pairs of ellipses, the homothetic pair with ratio $1:2$ is the unique one that can be transformed by an affine map into a pair of concentric circles.
\end{corollary}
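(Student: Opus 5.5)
The plan is to show that affine equivalence to concentric circles forces $\mathcal{D}_1$ and $\mathcal{D}_2$ to be homothetic about their common center. Once that is known, the ratio is pinned down by Corollary~\ref{cor:3ponsconell}.

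\textbf{Step 1: the pair is homothetic.} Let $L$ be an affine map with $L(\mathcal{D}_1)=\mathcal{C}_1$ and $L(\mathcal{D}_2)=\mathcal{C}_2$, where $\mathcal{C}_1,\mathcal{C}_2$ are concentric circles with common center $c$ and radii $R_1,R_2$. Put $r=R_2/R_1$ and let $h(\mathbf{x})=c+r(\mathbf{x}-c)$ be the homothety with $h(\mathcal{C}_1)=\mathcal{C}_2$.

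An affine map sends midpoints to midpoints, and hence centers of central conics to centers. So $L^{-1}(c)=O$, the common center of $\mathcal{D}_1,\mathcal{D}_2$. Write $L(\mathbf{x})=M\mathbf{x}+\mathbf{v}$. A direct computation gives
\[
L^{-1}\circ h\circ L(\mathbf{x})=O+r(\mathbf{x}-O),
\]
which is the homothety with center $O$ and ratio $r$. Therefore $\mathcal{D}_2=O+r(\mathcal{D}_1-O)$.

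\textbf{Step 2: compare semi-axes.} Scaling an ellipse about its own center keeps its axis directions and multiplies every semi-axis by $r$. Hence
\[
\alpha_2=r\alpha_1,\qquad \beta_2=r\beta_1,\qquad\text{so}\qquad \frac{\alpha_2}{\alpha_1}=\frac{\beta_2}{\beta_1}=r.
\]
In particular the axes of the two ellipses are parallel, so Corollary~\ref{cor:3ponsconell} applies to the pair.

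\textbf{Step 3: determine $r$.} By Corollary~\ref{cor:3ponsconell}, either \eqref{eq:alphabeta1} or \eqref{eq:alphabeta2} holds.
\begin{itemize}
\item Condition \eqref{eq:alphabeta2} becomes $r-r=\pm1$, that is, $0=\pm1$, which is impossible.
\item So \eqref{eq:alphabeta1} holds: $2r=1$, which gives $r=\tfrac12$.
\end{itemize}
As a cross-check, the Chapple--Landen relation \eqref{eq:chapplelanden} with $d=0$ applied to $(\mathcal{C}_1,\mathcal{C}_2)$ gives $R_1=2R_2$ directly. This also uses that affine maps preserve the Poncelet property, as noted in the proof of Theorem~\ref{thm:genchappleeuler2}.

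\textbf{Step 4: uniqueness.} The pair with ratio $1:2$ is indeed affinely equivalent to concentric circles. The map $T$ from the proof of Theorem~\ref{thm:genchappleeuler2} sends $\mathcal{D}_1$ to the unit circle and $\mathcal{D}_2$ to the concentric circle of radius $\tfrac12$.

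The only step needing care is Step~1, the conjugation of a homothety by an affine map. It is a one-line computation once centers are shown to correspond, so I do not expect a genuine obstacle.
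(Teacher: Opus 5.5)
Your proof is correct and follows the same route as the paper: show that affine equivalence to concentric circles forces $\mathcal{D}_1,\mathcal{D}_2$ to be homothetic about their common center, then substitute $\alpha_2/\alpha_1=\beta_2/\beta_1=r$ into Corollary~\ref{cor:3ponsconell} to obtain $r=\tfrac12$. Your conjugation $L^{-1}\circ h\circ L$ justifies the homothety step more explicitly than the paper's brief appeal to affine maps preserving ratios of semi-axes, and you also spell out two points the paper leaves tacit: homothety yields parallel axes, so Corollary~\ref{cor:3ponsconell} applies, and \eqref{eq:alphabeta2} is impossible when the two ratios coincide.
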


\begin{proof}
An affine transformation maps ellipses to ellipses and preserves ratios of semi-axes along principal directions. Hence, a pair of concentric ellipses can be mapped to concentric circles if and only if they are homothetic, i.e.,
\[
\frac{\alpha_2}{\alpha_1}=\frac{\beta_2}{\beta_1}.
\]
Combining this with \eqref{eq:alphabeta1} yields
$
2\alpha_2=\alpha_1,
$
which gives the result.
\end{proof}

\begin{theorem}\label{thm:affine_invariants_concentric_invariant}
Let $\mathcal P$ be a family of triangles inscribed in and circumscribed about a pair of homothetic ellipses. If $\alpha$ and $\beta$ are the semi-axes of the inellipse, then the following quantities remain invariant throughout the family:
\begin{itemize}
    \item[(a)] the area of a triangle in $\mathcal P$;
    \item[(b)] the sum of the areas of the squares constructed on the sides of a triangle in $\mathcal{P}$. 
\end{itemize}
More precisely, if $\triangle ABC \in \mathcal P$, then
\begin{align}
\operatorname{Area}(\triangle ABC) &= 3\sqrt{3}\,\alpha\beta, \label{eq:area_invariant}\\
|AB|^2+|BC|^2+|CA|^2 &= 18(\alpha^2+\beta^2). \label{eq:sqsum_invariant}
\end{align}
\end{theorem}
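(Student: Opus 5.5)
We interpret the hypothesis as in the discussion opening Section~\ref{sec:4}: the two ellipses are \emph{concentric} and homothetic, with parallel axes. (A homothety with center away from the common center would, after an affine map, give a non-concentric pair of circles, and Theorem~\ref{thm:invareapons} rules out invariant area there. The formula \eqref{eq:area_invariant} also presupposes the concentric case.) So let the inellipse be $\mathcal D_2$ with semi-axes $\alpha,\beta$ and the circumellipse be $\mathcal D_1$ with semi-axes $\alpha_1=k\alpha$, $\beta_1=k\beta$.

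\textbf{Step 1: fix the ratio.} By Corollary~\ref{cor:3ponsconell}, and exactly as in Corollary~\ref{cor:unique_homothetic}, the Poncelet condition with $\alpha/\alpha_1=\beta/\beta_1=1/k$ forces $2/k=1$. Indeed, \eqref{eq:alphabeta2} would give $0=\pm1$, so only \eqref{eq:alphabeta1} can hold. Hence $\alpha_1=2\alpha$ and $\beta_1=2\beta$.

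\textbf{Step 2: map to circles.} Apply the linear map $T(x,y)=(x/\alpha,\,y/\beta)$. It sends $\mathcal D_2$ to the unit circle and $\mathcal D_1$ to the concentric circle of radius $2$, and it carries $\mathcal P$ onto the Poncelet family of that concentric pair. A triangle inscribed in a circle and circumscribed about a concentric circle has incenter equal to circumcenter, so it is equilateral. Therefore every $T(\triangle ABC)$ has vertices
\[
2(\cos\theta_j,\sin\theta_j),\qquad \theta_j=\theta+\tfrac{2\pi j}{3},\quad j=0,1,2,
\]
and the preimages are $A_j=(2\alpha\cos\theta_j,\,2\beta\sin\theta_j)$.

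\textbf{Step 3: the area.} The equilateral triangle inscribed in a circle of radius $2$ has area $\tfrac{3\sqrt3}{4}\cdot 4=3\sqrt3$. Since $T^{-1}$ multiplies areas by $\det T^{-1}=\alpha\beta$, we get \eqref{eq:area_invariant}.

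\textbf{Step 4: the sum of squared sides.} This quantity is not affine invariant, so we compute it directly. We use \eqref{eq:sumAG} from Corollary~\ref{cor:sqsumlength}:
\[
|AB|^2+|BC|^2+|CA|^2=3\bigl(|AG|^2+|BG|^2+|CG|^2\bigr).
\]
The centroid is $G=0$, because $\sum_j\cos\theta_j=\sum_j\sin\theta_j=0$. Moreover $\sum_j\cos^2\theta_j=\sum_j\sin^2\theta_j=\tfrac32$. Hence
\[
\sum_j|A_jG|^2=4\alpha^2\cdot\tfrac32+4\beta^2\cdot\tfrac32=6(\alpha^2+\beta^2),
\]
which gives \eqref{eq:sqsum_invariant}. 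Both values are independent of $\theta$, so they are invariant throughout $\mathcal P$.

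The main obstacle is Step~1 together with the concentricity interpretation. One must justify that the ratio is exactly $1\!:\!2$, ruling out the $\pm$ branch in \eqref{eq:alphabeta2}, and the statement should be read with concentric homothetic ellipses. The remaining steps are routine trigonometric sums.
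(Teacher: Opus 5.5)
Your proof is correct and follows the paper's route. You reduce to the $1\!:\!2$ concentric pair, map by $T$ to concentric circles of radii $2$ and $1$ (where every Poncelet triangle is equilateral), and pull back the area by $\det T^{-1}=\alpha\beta$ and the squared side lengths componentwise. The differences are minor: you justify the $1\!:\!2$ ratio and the concentric reading explicitly, which the paper takes ``without loss of generality''. In part (b) you verify $\Sigma_X=\Sigma_Y=18$ by explicit trigonometric sums via the centroid identity, rather than by the paper's symmetry argument combined with the value $36$ from Theorem~\ref{thm:sqsumconcen}.
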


\begin{proof}
Without loss of generality, we may assume the circumellipse $\mathcal{D}_1$ and inellipse $\mathcal{D}_2$ as
\[
\mathcal D_1:\ \frac{x^2}{4\alpha^2}+\frac{y^2}{4\beta^2}=1,
\qquad
\mathcal D_2:\ \frac{x^2}{\alpha^2}+\frac{y^2}{\beta^2}=1.
\]
The affine transformation $T$ in \eqref{eq:affinet} with $\alpha_1=\alpha$ and $\beta_1=\beta$ maps the pair $(\mathcal D_1,\mathcal D_2)$ onto the pair of concentric circles $(T(\mathcal D_1),T(\mathcal D_2))$ where
\[
T(\mathcal D_1):\ x^2+y^2=4,
\qquad
T(\mathcal D_2):\ x^2+y^2=1.
\]
The family $\mathcal P$ transforms into the family $T(\mathcal P)$ of triangles inscribed in $T(\mathcal D_1)$ and circumscribed about $T(\mathcal D_2)$. Since $(T(\mathcal D_1),T(\mathcal D_2))$ is a $3$-Poncelet pair of two concentric circles, every triangle in $T(\mathcal P)$ is equilateral.

Let $T(\triangle ABC)$ be the image of $\triangle ABC\in\mathcal P$ under $T$. Since $T(\mathcal D_1)$ has radius $2$,
\[
\operatorname{Area}(T(\triangle ABC))
=3\sqrt3.
\]
Using the scaling property of affine transformation $T$, we obtain
\[
\operatorname{Area}(\triangle ABC)=\det(Q)^{-1}\operatorname{Area}(T(\triangle ABC))
=3\sqrt3\,\alpha\beta,
\]
which proves \eqref{eq:area_invariant}.

To prove \eqref{eq:sqsum_invariant}, denote
\[
\Sigma_X=
\sum_{\mathrm{cyc}}(x_{T(A)}-x_{T(B)})^2,
\qquad
\Sigma_Y=
\sum_{\mathrm{cyc}}(y_{T(A)}-y_{T(B)})^2.
\]
Since $T(\triangle ABC)$ is equilateral, the rotational symmetry implies that
$
\Sigma_X=\Sigma_Y.
$

Moreover, from \eqref{eq:sqsumconcen}, we get
\[
\Sigma_X+\Sigma_Y:=|T(A)T(B)|^2+|T(B)T(C)|^2+|T(C)T(A)|^2
=36.
\]
Therefore,
\[
\Sigma_X=\Sigma_Y=18.
\]

Note that for any vector $\mathbf{v}=(x,y)^T$:
\[
\|T^{-1}(\mathbf{v})\|^2=\alpha^2x^2+\beta^2y^2.
\]
Hence, we get
\[
|AB|^2+|BC|^2+|CA|^2
=
\alpha^2\Sigma_X+\beta^2\Sigma_Y.
\]
Substituting the values $\Sigma_X=\Sigma_Y=18$ into the last equation gives the result \eqref{eq:sqsum_invariant}.
\end{proof}

\subsection{3-Poncelet pairs of a central conic and a parabola}
\begin{theorem}[Generalized Chapple--Euler Formula III]\label{thm:genchappleeuler3}
Let $\mathcal{D}_1$ be an ellipse with semi-axes $\alpha,\beta$ and let $\mathcal{D}_2$ be a parabola with focal parameter $p$. Suppose that one of the principal axes of $\mathcal D_1$ is parallel to the axis of $\mathcal D_2$. Then $(\mathcal{D}_1,\mathcal{D}_2)$ forms a $3$-Poncelet pair if and only if the point $P=(p\alpha^2/ \beta^2,0)$ lies on $\mathcal{D}_1$ (Figure \ref{fig:ellpar3pons(A)}).
\end{theorem}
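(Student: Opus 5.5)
The plan is to reduce to the circular case by the affine map $T$ of \eqref{eq:affinet}, as in the proof of Theorem~\ref{thm:genchappleeuler2}, and then use Lambert's classical theorem on parabolas.

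\textbf{Normalization.} Place the parabola as $\mathcal D_2: y^2=4px$, with axis along the $x$-axis, vertex at the origin and focus $(p,0)$. Let $\mathcal D_1$ be $\frac{(x-x_O)^2}{\alpha^2}+\frac{(y-y_O)^2}{\beta^2}=1$; the hypothesis lets me take its principal axes along the coordinate directions. Apply $T(x,y)=(x/\alpha,\,y/\beta)$. Then $T(\mathcal D_1)$ is the unit circle centered at $(x_O/\alpha,\,y_O/\beta)$. Also $T(\mathcal D_2)$ is $\beta^2y^2=4p\alpha x$, that is $y^2=4\bigl(p\alpha/\beta^2\bigr)x$, a parabola with focus $F'=(p\alpha/\beta^2,0)$. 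Since $T(P)=F'$ and affine maps preserve the Poncelet property, the theorem reduces to the following circular statement: a circle $\mathcal C$ and a parabola $\mathcal D$ form a $3$-Poncelet pair if and only if the focus $F$ of $\mathcal D$ lies on $\mathcal C$.

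\textbf{Necessity.} This is Lambert's theorem: the circumcircle of any triangle formed by three tangents to a parabola passes through its focus. I would either cite it or verify it quickly. Parametrize the tangents at $(pt_i^2,2pt_i)$ as $x-t_iy+pt_i^2=0$. The pairwise intersections are $(pt_it_j,\;p(t_i+t_j))$, and a direct check shows that these three points are concyclic with $F=(p,0)$.

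\textbf{Sufficiency.} Assume $F\in\mathcal C$. Pick a generic $A\in\mathcal C$ outside $\mathcal D$ and draw the two tangents $t_1,t_2$ from $A$ to $\mathcal D$. Let $B$ be the second intersection of $t_1$ with $\mathcal C$, and let $t$ be the other tangent from $B$ to $\mathcal D$. By Lambert's theorem, the circumcircle of the triangle bounded by $t_1,t_2,t$ passes through $F$. Its vertices are $A$, $B$ and $t_2\cap t$. When $A,B,F$ are not collinear, this circle is therefore the circle through $A,B,F$, which is $\mathcal C$. Hence $t_2\cap t$ lies on $\mathcal C$ and equals $C$, the second intersection of $t_2$ with $\mathcal C$, so $\triangle ABC$ is a Poncelet triangle.

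\textbf{Main obstacle.} The delicate part is the genericity: choosing $A$ so that the tangents are real, $A,B,F$ are not collinear, and the triangle is nondegenerate. Only finitely many $A$ are excluded, which mirrors the argument in Lemma~\ref{lemm:3pons}. If that bookkeeping becomes awkward, I would fall back on the Joachimsthal route. Using Lemma~\ref{lemm:tangent} with $S=y^2-4px$, a computer-algebra factorization should give $S_{BB}S_{CC}-S_{BC}^2$ as a multiple of $S_{AA}\,f_1\cdot\bigl((x_O-p)^2+y_O^2-R^2\bigr)$. Proposition~\ref{prop:sBBsCC} and the argument of Lemma~\ref{lemm:3pons} would then finish both directions.
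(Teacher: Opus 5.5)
Your reduction matches the paper's: the same map $T$, the same image parabola $y^2=4(p\alpha/\beta^2)x$, and the observation $T(P)=F'$. So everything rests on the circle--parabola criterion: $(\mathcal C,\mathcal D)$ is a $3$-Poncelet pair if and only if the focus of $\mathcal D$ lies on $\mathcal C$. This is where the two routes differ.

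\begin{itemize}
\item The paper does not prove this criterion; it cites it from \cite{Dragovic-Murad2025a,Dragovic-Murad2025b}, where it comes from the Joachimsthal/Cayley machinery. Your fallback computation is essentially that argument.
\item You derive it from Lambert's theorem instead. This gives a self-contained proof with no computer algebra. Necessity is immediate. In the sufficiency step, the circumcircle of the triangle bounded by $t_1,t_2,t$ passes through $A,B,F$ and therefore equals $\mathcal C$. This shows directly that every admissible starting point closes, so you prove the porism and not just existence.
\item Your collinearity worry is unnecessary. The focus lies inside the parabola, so $A\neq F$ (as $A$ is outside) and $B\neq F$ (no tangent passes through $F$). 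Three distinct points of a circle are never collinear.
\end{itemize}

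One correction to your main-obstacle paragraph: reality of the tangents is not a finite exclusion. Every point of the parabola is at distance at least $|p|$ from $F$. So a circle through $F$ of diameter less than $|p|$ lies entirely inside the parabola. Then no point of $\mathcal C$ has real tangents, and there is no real Poncelet triangle even though $F\in\mathcal C$.

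Sufficiency therefore needs the standing hypothesis of Remark~\ref{rem:detD}: $\mathcal C$ (equivalently $\mathcal D_1$) is not contained in the interior of $\mathcal D$. The paper's statement and proof leave this implicit as well. Under that hypothesis there is an open arc of points $A$ with real, distinct tangents. Only finitely many further points must then be avoided: those where $A$ lies on a common tangent of $\mathcal C$ and $\mathcal D$, or where $B\in\mathcal C\cap\mathcal D$. With that hypothesis added, your argument is complete.
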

\begin{proof} Without loss of generality, we may assume that
\begin{equation*}
\mathcal{D}_1:\frac{(x-x_O)^2}{\alpha^2}+\frac{(y-y_O)^2}{\beta^2}=1,\qquad
\mathcal{D}_2:y^2=4px.
\end{equation*}
The affine transformation $T$ in \eqref{eq:affinet} with $\alpha_1=\alpha$ and $\beta_1=\beta$ maps the pair $(\mathcal D_1,\mathcal D_2)$ onto the pair $(T(\mathcal D_1),T(\mathcal D_2))$ given by
\[
T(\mathcal D_1): \left(x-\frac{x_O}{\alpha}\right)^2+\left(y-\frac{y_O}{\beta}\right)^2=1,
\qquad
T(\mathcal D_2):\ y^2=4\frac{p\alpha}{\beta^2}x.
\]
The pair $(T(\mathcal D_1),T(\mathcal D_2))$ forms a 3-Poncelet pair if and only if the focus of the parabola $T(\mathcal{D}_2)$ lies on the circle $T(\mathcal{D}_1)$ (see, for example, \cite{Dragovic-Murad2025a,Dragovic-Murad2025b}). This condition is equivalent to
\begin{equation}\label{eq:pliesell}
    \left(\frac{p\alpha}{\beta^2}-\frac{x_O}{\alpha}\right)^2+\left(0-\frac{y_O}{\beta}\right)^2=1,
\end{equation}
which is precisely the condition that the point $P=(p\alpha^2/ \beta^2,0)$ lies on $\mathcal{D}_1$.

Finally, since affine transformations preserve the incidence and tangency, $(\mathcal{D}_1,\mathcal{D}_2)$ is a $3$-Poncelet pair if and only if $(T(\mathcal{D}_1),T(\mathcal{D}_2))$ is a $3$-Poncelet pair. This completes the proof.
\end{proof}

\begin{corollary}
    Let $\mathcal{D}_1$ be an ellipse with semi-axes $\alpha,\beta$ and let $\mathcal{D}_2$ be a parabola with the focal parameter $p$. Suppose that the axis of $\mathcal{D}_2$ is parallel to one of the principal axes of $\mathcal{D}_1$, and that the vertex of $\mathcal{D}_2$ coincides with the center of $\mathcal{D}_1$. Then $(\mathcal{D}_1,\mathcal{D}_2)$ forms a $3$-Poncelet pair if and only if
\begin{equation}\label{eq:betapalpha}
        \beta^2=|p|\alpha.
\end{equation}
\end{corollary}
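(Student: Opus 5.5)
The plan is to specialize Theorem~\ref{thm:genchappleeuler3} directly. First I fix coordinates. The parabola is $\mathcal D_2: y^2=4px$, with vertex at the origin and axis along the $x$-axis. By hypothesis the center of $\mathcal D_1$ is the vertex of $\mathcal D_2$, so $(x_O,y_O)=(0,0)$. The axis of $\mathcal D_2$ is parallel to a principal axis of $\mathcal D_1$; I label the semi-axes so that $\alpha$ is the semi-axis along the parabola's axis, matching the convention in Theorem~\ref{thm:genchappleeuler3}. This gives
\[
\mathcal D_1:\ \frac{x^2}{\alpha^2}+\frac{y^2}{\beta^2}=1 .
\]
I allow $p$ of either sign, so the parabola may open in the negative $x$-direction.

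Next I apply Theorem~\ref{thm:genchappleeuler3}. The pair is $3$-Poncelet if and only if $P=(p\alpha^2/\beta^2,0)$ lies on $\mathcal D_1$. With $x_O=y_O=0$, condition \eqref{eq:pliesell} becomes $\left(p\alpha/\beta^2\right)^2=1$. Equivalently $|p|\alpha=\beta^2$, which is \eqref{eq:betapalpha}.

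The main point to check is that Theorem~\ref{thm:genchappleeuler3} really covers $p<0$. The affine map $T$ and the cited criterion (the focus of $T(\mathcal D_2)$ lies on the circle) are both symmetric under $x\mapsto -x$. Alternatively, I can reflect in the $y$-axis: this fixes $\mathcal D_1$, replaces $p$ by $-p$, and preserves the Poncelet property. That reduces everything to $p>0$ and explains the absolute value.
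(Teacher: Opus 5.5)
Your proposal is correct and is the intended argument. The paper gives no separate proof and treats the corollary as the specialization $x_O=y_O=0$ of \eqref{eq:pliesell} in Theorem~\ref{thm:genchappleeuler3}, which yields $(p\alpha/\beta^2)^2=1$, i.e.\ $\beta^2=|p|\alpha$; your remarks on labeling $\alpha$ as the semi-axis along the parabola's axis and on reducing $p<0$ to $p>0$ by reflection make explicit what the paper leaves implicit.
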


\begin{figure}
  \begin{subfigure}[b]{0.45\textwidth}
    \centering
\begin{tikzpicture}[scale=0.8]
\clip(-4,-4) rectangle (5,5);
\draw [samples=100,rotate around={-90.:(0.,0.)},xshift=0.cm,yshift=0.cm,line width=1.pt,color=red,domain=-12.0:12.0)] plot (\x,{(\x)^2/2/2.0});
\draw [rotate around={90.:(0.4444444444444444,0.)},line width=1.pt,dash pattern=on 4pt off 4pt,gray] (0.4444444444444444,0.) ellipse (3.cm and 2.cm);
\draw [rotate around={90.:(-1.2725099812520633,1.5385551258176957)},line width=1.pt] (-1.2725099812520633,1.5385551258176957) ellipse (3.cm and 2.cm);
\draw [line width=1.pt,domain=-4.935900993548415:9.05658077298987,lightgray] plot(\x,{(-8.962941923927904-3.549203721984111*\x)/5.64015131324793});
\draw [line width=1.pt,domain=-4.935900993548415:9.05658077298987,lightgray] plot(\x,{(--13.635626327995974--3.549203721984111*\x)/6.956695746897937});
\draw [line width=1.pt,domain=-4.935900993548415:9.05658077298987,lightgray] plot(\x,{(--0.40528188101291485--3.5492037219844503*\x)/1.1993448046944897});
\draw [line width=1.pt,color=blue] (-3.1148124975551097,0.37094079032296845)-- (-0.5369983570438477,-1.2512120623133915);
\draw [line width=1.pt,color=blue] (-0.5369983570438477,-1.2512120623133915)-- (0.6623464476506419,2.297991659671059);
\draw [line width=1.pt,color=blue] (0.6623464476506419,2.297991659671059)-- (-3.1148124975551097,0.37094079032296845);
\begin{scriptsize}
\draw[color=red] (4.1,3.7) node {$\mathcal{D}_2$};
% \draw[color=black] (-1.5340590961790155,-1.8821772394550935) node {$eq2$};
\draw [fill=wewdxt] (-1.2725099812522718,1.5385551258180623) circle (2.0pt);
\draw[color=wewdxt] (-1.5982447923557963,1.648036050267859) node {$O$};
\draw[color=black] (-3.058469380377567,-0.6947418601846459) node {$\mathcal{D}_1$};
\draw [fill=wewdxt] (-3.1148124975551097,0.37094079032296845) circle (1.5pt);
\draw[color=wewdxt] (-3.5,0.42) node {$A$};
\draw [fill=wewdxt] (-0.5369983570438477,-1.2512120623133915) circle (1.5pt);
\draw[color=wewdxt] (-0.5,-1.5291559104827983) node {$B$};
\draw [fill=wewdxt] (0.6623464476506419,2.297991659671059) circle (1.5pt);
\draw[color=wewdxt] (0.92,2.6108214929195728) node {$C$};
\draw [fill=brown] (0.4444444444444444,0.) circle (1.5pt);
\draw[color=brown] (0.5519760295663709,-0.16520986672620303) node {$P$};
\end{scriptsize}
\end{tikzpicture}
    \caption{$\mathcal{D}_1$ is an ellipse.}
    \label{fig:ellpar3pons(A)}
\end{subfigure}
  \begin{subfigure}[b]{0.45\textwidth}
    \centering
\begin{tikzpicture}[scale=0.45]
\clip(-10,-6) rectangle (6,8);
\draw [samples=100,rotate around={-90.:(0.,0.)},xshift=0.cm,yshift=0.cm,line width=1.pt,color=red,domain=-12.0:12.0)] plot (\x,{(\x)^2/2/2.0});
\draw [samples=50,domain=-0.99:0.99,rotate around={0.:(-1.7777777777777777,0.)},xshift=-1.7777777777777777cm,yshift=0.cm,line width=1.pt,dash pattern=on 4pt off 4pt,gray] plot ({2.*(1+(\x)^2)/(1-(\x)^2)},{1.5*2*(\x)/(1-(\x)^2)});
\draw [samples=50,domain=-0.99:0.99,rotate around={0.:(-1.7777777777777777,0.)},xshift=-1.7777777777777777cm,yshift=0.cm,line width=1.pt,dash pattern=on 4pt off 4pt,gray] plot ({2.*(-1-(\x)^2)/(1-(\x)^2)},{1.5*(-2)*(\x)/(1-(\x)^2)});
\draw [samples=50,domain=-0.99:0.99,rotate around={0.:(-5.650721619706137,2.487433994408433)},xshift=-5.650721619706137cm,yshift=2.487433994408433cm,line width=1.pt] plot ({2.*(1+(\x)^2)/(1-(\x)^2)},{1.5*2*(\x)/(1-(\x)^2)});
\draw [samples=50,domain=-0.99:0.99,rotate around={0.:(-5.650721619706137,2.487433994408433)},xshift=-5.650721619706137cm,yshift=2.487433994408433cm,line width=1.pt] plot ({2.*(-1-(\x)^2)/(1-(\x)^2)},{1.5*(-2)*(\x)/(1-(\x)^2)});
\draw [line width=1.pt,domain=-15.587769298391878:12.276788981308,lightgray] plot(\x,{(-17.921076915982763-6.246224407718338*\x)/10.580126088341729});
\draw [line width=1.pt,domain=-15.587769298391878:12.276788981308,lightgray] plot(\x,{(--129.44783330524896--6.246224407718337*\x)/28.435193263234574});
\draw [line width=1.pt,domain=-15.587769298391878:12.276788981308,lightgray] plot(\x,{(--0.4261198382699114--1.0597272487835592*\x)/-0.6719901813731124});
\draw [line width=1.pt,color=blue] (-7.711020570160531,2.8585375755680005)-- (1.0740934949937126,-2.327959583366778);
\draw [line width=1.pt,color=blue] (1.0740934949937126,-2.327959583366778)-- (-2.886738386465543,3.918264824352965);
\draw [line width=1.pt,color=blue] (-2.886738386465543,3.918264824352965)-- (-7.711020570160531,2.8585375755680005);
\begin{scriptsize}
\draw[color=red] (4.8,3.8) node {$\mathcal D_2$};
\draw [fill=wewdxt] (-5.6507216197061245,2.487433994407896) circle (3.5pt);
\draw[color=wewdxt] (-5.449812988118093,2.9754545122853684) node {$O$};
\draw[color=black] (-1.4776312759055568,6.11762810552812) node {$\mathcal D_1$};
\draw [fill=wewdxt] (-7.711020570160531,2.8585375755680005) circle (2.5pt);
\draw[color=wewdxt] (-7.495190138436488,3.3608154246641964) node {$A$};
\draw [fill=wewdxt] (1.0740934949937126,-2.327959583366778) circle (2.5pt);
\draw[color=wewdxt] (0.8345341983674118,-2.6271002907606698) node {$B$};
\draw [fill=wewdxt] (-2.886738386465543,3.918264824352965) circle (2.5pt);
\draw[color=wewdxt] (-2.9005023369966145,4.516898161800681) node {$C$};
\draw [fill=brown] (-1.7777777777777777,0.) circle (2.5pt);
\draw[color=brown] (-1.566560717223748,0.48543015537601825) node {$P$};
\end{scriptsize}
\end{tikzpicture}
    \caption{$\mathcal{D}_1$ is a hyperbola.}
    \label{fig:ellpar3pons(B)}
\end{subfigure}
    \caption{3-Poncelet pairs $(\mathcal{D}_1,\mathcal{D}_2)$ of a central conic $\mathcal{D}_1$ and a parabola $\mathcal{D}_2$.}
    \label{fig:ellpar3pons}
\end{figure}

\begin{remark}
Theorem~\ref{thm:genchappleeuler3} admits a hyperbolic counterpart (Figure \ref{fig:ellpar3pons(B)}). If $\mathcal{D}_1$ is a hyperbola with semi-axes $\alpha$ and $\beta$, then $(\mathcal{D}_1,\mathcal{D}_2)$ forms a $3$-Poncelet pair if and only if the point
\[
P=\left(-\frac{p\alpha^2}{\beta^2},0\right) \in \mathcal{D}_1.
\]
\end{remark}

\section{Area-Invariance of Power Circles}\label{sec:conj}
\noindent
In this section, we present and prove several generalizations of invariance phenomena concerning the total area of the power circles associated with Poncelet triangles, originally discovered by Dan Reznik through computer algebra system (CAS) experiments; see the YouTube videos \cite{Reznik2021,Reznik2024}. Our results not only establish these observations rigorously, but also provide necessary and sufficient conditions for such invariance and extend them to a broader class of Poncelet pairs. Finally, we conclude the section by formulating two conjectures that extends to Poncelet polygons with an odd number of sides.

We begin by recalling the notion of a power circle.
\begin{definition}
For a triangle, the circles centered at the midpoints of its sides and passing through the opposite vertices are called the \emph{power circles} of the triangle.
\end{definition}

\begin{theorem}\label{thm:geninvarea}
Let $\mathcal P$ be a family of triangles inscribed in a circle and circumscribed about a central conic $\mathcal D$. Then the total area of the power circles of a triangle in $\mathcal P$ remains invariant throughout the family if and only if the circumcenter of the triangle coincides either with the center of the inconic $\mathcal D$ or with one of its foci.
\end{theorem}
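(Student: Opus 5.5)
The plan is to reduce the statement to Theorem~\ref{thm:sqsumindep} via Apollonius's identity on medians. The power circle centered at the midpoint $M_a$ of $BC$ and passing through $A$ has radius equal to the median $m_a=|AM_a|$. Its area is therefore $\pi m_a^2$, and the total area of the three power circles is $\pi(m_a^2+m_b^2+m_c^2)$.

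By Apollonius's theorem, $m_a^2=\tfrac14\bigl(2|AB|^2+2|AC|^2-|BC|^2\bigr)$, and cyclically for $m_b^2$ and $m_c^2$. Summing the three gives
\[
m_a^2+m_b^2+m_c^2=\tfrac34\bigl(|AB|^2+|BC|^2+|CA|^2\bigr).
\]
The same identity also follows from \eqref{eq:sumAG}, since $|AG|=\tfrac23 m_a$. Hence the total area of the power circles equals $\tfrac{3\pi}{4}\bigl(|AB|^2+|BC|^2+|CA|^2\bigr)$, which is a fixed positive multiple of the sum of the areas of the squares on the sides.

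It follows that invariance of the total power-circle area throughout $\mathcal P$ is equivalent to invariance of $|AB|^2+|BC|^2+|CA|^2$ throughout $\mathcal P$. Theorem~\ref{thm:sqsumindep} states that the latter holds if and only if the center of $\mathcal C$, which is the circumcenter of each triangle in $\mathcal P$, coincides with the center of $\mathcal D$ or with one of its foci. This completes the argument. As an explicit check, Theorems~\ref{thm:sqsumconcen} and \ref{thm:sqsumfoccen} give the invariant values $\tfrac{3\pi}{4}\bigl(9R^2-c^4/R^2\bigr)$ and $\tfrac{3\pi}{4}\bigl(9R^2-4c^2\bigr)$ respectively.

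No step here is a real obstacle; the only substantive input is Theorem~\ref{thm:sqsumindep}, which is already proved. The one point to state carefully is that the family $\mathcal P$ is nonempty and infinite, so that invariance is a meaningful condition. This is guaranteed by Lemma~\ref{lemm:3pons} for a $3$-Poncelet pair.
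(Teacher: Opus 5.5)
Your proposal is correct and follows the paper's argument: you identify each power-circle radius with the corresponding median, use Apollonius's identity (Theorem~\ref{thm:med}) to write the total area as $\tfrac{3\pi}{4}\left(|AB|^2+|BC|^2+|CA|^2\right)$, and then invoke Theorem~\ref{thm:sqsumindep}. The explicit invariant values you derive from Theorems~\ref{thm:sqsumconcen} and \ref{thm:sqsumfoccen} are correct, and the paper does not state them.
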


\begin{theorem}\label{thm:invareafoc}
Let $\mathcal P$ be a family of triangles inscribed in and circumscribed about a pair of homothetic ellipses. Then the total area of the power circles of a triangle in $\mathcal P$ remains invariant throughout the family.
\end{theorem}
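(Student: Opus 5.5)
The plan is to reduce the power-circle area to the sum of squares of the sides via Apollonius's identity on medians, and then invoke Theorem~\ref{thm:affine_invariants_concentric_invariant}(b).

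\textbf{Step 1: power-circle radii are medians.} The power circle centered at the midpoint $M_a$ of $BC$ passes through $A$, so its radius is the median $m_a=|AM_a|$. Its area is therefore $\pi m_a^2$, and the total area of the three power circles is $\pi(m_a^2+m_b^2+m_c^2)$.

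\textbf{Step 2: Apollonius.} Apollonius's identity gives $4m_a^2=2|AB|^2+2|CA|^2-|BC|^2$. Summing cyclically,
\[
m_a^2+m_b^2+m_c^2=\tfrac34\bigl(|AB|^2+|BC|^2+|CA|^2\bigr).
\]
This can also be read off from \eqref{eq:sumAG}, since $|AG|=\tfrac23 m_a$. Hence the total area of the power circles equals $\tfrac{3\pi}{4}\bigl(|AB|^2+|BC|^2+|CA|^2\bigr)$.

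\textbf{Step 3: invariance.} For a pair of homothetic ellipses forming a $3$-Poncelet pair, the pair is concentric. After an affine normalization, Corollary~\ref{cor:unique_homothetic} shows it is the $1:2$ homothetic pair. Then Theorem~\ref{thm:affine_invariants_concentric_invariant}(b) gives
\[
|AB|^2+|BC|^2+|CA|^2=18(\alpha^2+\beta^2),
\]
so the total area is $\tfrac{27\pi}{2}(\alpha^2+\beta^2)$, which is constant along the family.

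\textbf{Main obstacle.} The one point needing care is justifying that a homothetic $3$-Poncelet pair is necessarily concentric with ratio $1:2$; the statement may tacitly mean homothety centered at the common center. Note that Theorem~\ref{thm:genchappleeuler2} applies only to parallel axes, which holds automatically for homothetic ellipses.

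I would argue as follows. Map the outer ellipse to a circle by an affine map. The inner ellipse becomes a circle, since homothetic ellipses map to homothetic ellipses. By Theorem~\ref{thm:chapplelanden} there is then a Poncelet triangle family between two circles.

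If the homothety center is the common center, Corollary~\ref{cor:3ponsconell} with $\alpha_2/\alpha_1=\beta_2/\beta_1$ forces ratio $1/2$, and Theorem~\ref{thm:affine_invariants_concentric_invariant} applies directly. In the non-concentric case, I would instead use the affine image: the problem becomes an Euler-type pair of circles, where the sum of squared sides is not constant by Theorem~\ref{thm:sqsumindep}. So I would state the result for the concentric (the paper's) setting, which is the one covered by Theorem~\ref{thm:affine_invariants_concentric_invariant}.
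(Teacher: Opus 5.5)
Your proof is correct and follows the paper's own route: the power-circle radii are the medians, Theorem~\ref{thm:med} turns the total area into $\tfrac{3\pi}{4}\bigl(|AB|^2+|BC|^2+|CA|^2\bigr)$, and Theorem~\ref{thm:affine_invariants_concentric_invariant}(b) makes this equal to $\tfrac{27\pi}{2}(\alpha^2+\beta^2)$.

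You should delete the opening claim of Step~3, because it is false. The circumcircle and incircle of any non-equilateral triangle, or any affine image of that pair, form a non-concentric homothetic $3$-Poncelet pair. Concentricity is therefore a hypothesis, which the paper builds in tacitly through its normalization in Theorem~\ref{thm:affine_invariants_concentric_invariant}. Without it the literal statement does fail. Your affine-image argument does not show this, since sums of squared lengths are not affine invariants. The failure already occurs for two non-concentric circles, by Theorem~\ref{thm:sqsumindep}.
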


The problem of determining when the total area of the power circles is an invariant over a Poncelet family of triangles can be reduced to a classical metric relation on medians. Indeed, the radius of each power circle is equal to the length of the corresponding median of the triangle. Consequently, the total area of the three power circles is proportional to the sum of the squares of the medians. As a consequence of Apollonius's theorem, the latter is itself proportional to the sum of the squares of the sides (see Theorem \ref{thm:med} below and see, for example, \cite[Art. 106, p. 70]{AltshillerCourt1952}).
\begin{theorem}\label{thm:med} 
The sum of the squares of the medians of a triangle is equal to three-fourths the sum of the squares of the sides. 

Symbolically,
\begin{equation}\label{eq:med}
m_a^2+m_b^2+m_c^2=\frac{3}{4}\left(|AB|^2+|BC|^2+|CA|^2\right)
\end{equation}
where $m_a$, $m_b$, $m_c$ denote the medians of $\triangle ABC$, respectively.
\end{theorem}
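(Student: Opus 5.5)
The plan is to derive Apollonius's theorem for each median from the identity of Theorem~\ref{thm:sqsumlengthP}, or equivalently from a direct vector computation, and then add the three resulting identities.

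Let $M_a$ be the midpoint of $\overline{BC}$, so that $m_a=|AM_a|$. I will prove Apollonius's identity
\[
|AB|^2+|AC|^2=2m_a^2+\tfrac12|BC|^2 .
\]
Place the origin at $M_a$ and write $\mathbf{a},\mathbf{b}$ for the position vectors of $A$ and $B$, so that $C$ has position vector $-\mathbf{b}$. Then
\[
|AB|^2+|AC|^2=|\mathbf a-\mathbf b|^2+|\mathbf a+\mathbf b|^2=2|\mathbf a|^2+2|\mathbf b|^2 .
\]
The cross terms cancel, and since $|\mathbf a|=m_a$ and $|\mathbf b|=\tfrac12|BC|$, this is exactly the claimed identity. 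Equivalently, one can apply the parallelogram law to the parallelogram with diagonals $\overline{AA'}$ and $\overline{BC}$, where $A'$ is the reflection of $A$ in $M_a$.

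Solving for the median gives $m_a^2=\tfrac12(|AB|^2+|AC|^2)-\tfrac14|BC|^2$, and the analogous formulas hold for $m_b^2$ and $m_c^2$. Summing the three, each side length squared appears with total coefficient $\tfrac12+\tfrac12-\tfrac14=\tfrac34$. This yields \eqref{eq:med}.

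As a cross-check, one can use the centroid instead. Since $|AG|=\tfrac23 m_a$, and similarly for $B$ and $C$, equation \eqref{eq:sumAG} from the proof of Corollary~\ref{cor:sqsumlength} reads
\[
|AB|^2+|BC|^2+|CA|^2=3\cdot\tfrac49\,(m_a^2+m_b^2+m_c^2),
\]
which is again \eqref{eq:med}.

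There is no real obstacle here. The only point needing care is the bookkeeping of coefficients, namely that each side occurs twice with weight $\tfrac12$ and once with weight $-\tfrac14$.
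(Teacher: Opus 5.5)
Your proof is correct and follows the route the paper itself indicates. The paper gives no separate argument for \eqref{eq:med}; it obtains it from Apollonius's theorem (citing Altshiller-Court), which is exactly what your vector computation proves and then sums, and your cross-check via \eqref{eq:sumAG} and $|AG|=\tfrac23 m_a$ is also valid.
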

Therefore, the invariance of the total area of the power circles is equivalent to the invariance of the sum of the squares of the medians, and hence to the invariance of
\[
|AB|^2+|BC|^2+|CA|^2.
\]
For Poncelet families of triangles inscribed in a circle and circumscribed about a central conic, Theorem~\ref{thm:sqsumindep} shows that area invariance occurs precisely when the circumcenter coincides either with the center of the conic or with one of its foci. As a consequence, Theorem \ref{thm:geninvarea} follows directly from Theorem \ref{thm:sqsumindep}. 

Moreover, the scope of Theorem~\ref{thm:geninvarea} extends beyond the setting originally observed by Reznik. It applies not only to ellipses whose foci lie inside the circumcircle, but also to ellipses whose foci lie outside the circumcircle and to hyperbolas, thereby providing a unified treatment of all central conics. Theorem~\ref{thm:invareafoc} follows similarly from Theorem~\ref{thm:affine_invariants_concentric_invariant}.

\medskip

We conclude this section by proposing two conjectures that extend Theorems \ref{thm:geninvarea}--\ref{thm:invareafoc} to $p$-gons.

\begin{conjecture}\label{conj:geninvarea}
Let $p>3$ be an odd integer and let $\mathcal P$ be a family of $p$-gons inscribed in a circle $\mathcal C$ and circumscribed about a central conic $\mathcal D$. Then the total area of the power circles of a polygon in $\mathcal P$ is invariant throughout the family if and only if the center of $\mathcal C$ coincides either with the center of the inellipse $\mathcal D$ or with one of its foci.
\end{conjecture}

\begin{conjecture}\label{conj:invareafoc}
Let $p > 3$ be an odd integer and let $\mathcal P$ be a family of $p$-gons inscribed in and circumscribed about a pair of homothetic ellipses. Then the total area of the power circles of a $p$-gon in $\mathcal P$ is invariant throughout the family.
\end{conjecture}

 \vspace{1.5cm}
\noindent
\thanks{\textbf{Acknowledgments}. The authors acknowledge the use of  \textit{Mathematica} and \textit{GeoGebra} for symbolic and numerical computations, as well as for generating the figures.

The authors express their sincere gratitude to the anonymous reviewer for  valuable suggestions.  In particular, they are deeply grateful for an insightful suggestion to employ Blaschke products in the study of the area invariance problem; this idea has been instrumental in transforming an initial conjecture into Theorem~\ref{thm:invareapons}.

They are grateful to Dan Reznik for helpful discussions and for valuable correspondence with the second author.

The first author’s research was supported by the Serbian Ministry of Science, Technological Development and Innovation, Science Fund of Serbia (grant IntegraRS), and Simons Foundation (grant no.~854861).

\bibliographystyle{amsplain}  % or amsalpha
\bibliography{references}
\end{document}